\newcommand\reallywidehat[1]{%
\savestack{\tmpbox}{\stretchto{%
  \scaleto{%
    \scalerel*[\widthof{\ensuremath{#1}}]{\kern-.6pt\bigwedge\kern-.6pt}%
    {\rule[-\textheight/2]{1ex}{\textheight}}
  }{\textheight}%
}{0.5ex}}%
\stackon[1pt]{#1}{\tmpbox}
}
\tikzset{
  arrow/.style={-latex, shorten >=1ex, shorten <=1ex}}
\newtheorem{Theorem}{Theorem}
\newtheorem{Lemma}{Lemma}		
\newtheorem{Proposition}{Proposition}
\newtheorem{Corollary}{Corollary}	
\theoremstyle{definition}
\newtheorem{Remark}{Remark}	 
\newtheorem{Definition}{Definition} 
\newcommand{\C}{\mathbb{C}} 
\newcommand{\R}{\mathbb{R}} 
\newcommand{\Z}{\mathbb{Z}} 
\newcommand{\N}{\mathbb{N}} 
\newcommand{\RT}{\operatorname{Re}}
\newcommand{\IT}{\operatorname{Im}}
\newcommand{\iu}{\mathrm{i}}
\newcommand{\eu}{\mathrm{e}}
\newcommand{\per}{\mathrm{per}}
\newcommand{\dom}{\mathrm{dom}}
\renewcommand{\epsilon}{\varepsilon} 
\DeclareMathOperator{\sign}{sign}
\DeclareMathOperator{\spann}{span}
\DeclareMathOperator{\range}{range}
\makeatletter\@addtoreset {equation}{section}\makeatother
\begin{document}

\title[Pinning in the extended Lugiato-Lefever equation]{Pinning in the extended Lugiato-Lefever equation}

\author{Lukas Bengel}
\address{L. Bengel \hfill\break 
Institute for Analysis,\hfill\break
Karlsruhe Institute of Technology (KIT), \hfill\break
D-76128 Karlsruhe, Germany}
\email{lukas.bengel@kit.edu}

\author{Dmitry Pelinovsky}
\address{D. Pelinovsky \hfill\break
Department of Mathematics and Statistics, \hfill\break
McMaster University, \hfill\break
Hamilton, Ontario, Canada, L8S 4K1}
\email{dmpeli@math.mcmaster.ca}

\author{Wolfgang Reichel}
\address{W. Reichel \hfill\break 
Institute for Analysis,\hfill\break
Karlsruhe Institute of Technology (KIT), \hfill\break
D-76128 Karlsruhe, Germany}
\email{wolfgang.reichel@kit.edu}

\begin{abstract} 
	We consider a variant of the Lugiato-Lefever equation (LLE), which is a nonlinear Schr\"odinger equation on a one-dimensional torus with forcing and damping, to which we add a first-order derivative term with a potential $\epsilon V(x)$. The potential breaks the translation invariance of LLE. Depending on the existence of zeroes of the effective potential $V_\text{eff}$, which is a suitably weighted and integrated version of $V$, we show that stationary solutions from $\epsilon=0$ can be continued locally into the range $\epsilon\not =0$. Moreover, the extremal points of the $\epsilon$-continued solutions are located near zeros of $V_\text{eff}$. We therefore call this phenomenon \emph{pinning} of stationary solutions. If we assume additionally that the starting stationary solution at $\epsilon=0$ is spectrally stable with the simple zero eigenvalue due to translation invariance being the only eigenvalue on the imaginary axis, we can prove asymptotic stability or instability of its $\epsilon$-continuation depending on the sign of $V_\text{eff}'$ at the zero of $V_\text{eff}$ and the sign of $\epsilon$. The variant of the LLE arises in the description of optical frequency combs in a Kerr nonlinear ring-shaped microresonator which is pumped by two different continuous monochromatic light sources of different frequencies and different powers. Our analytical findings are illustrated by numerical simulations.   
\end{abstract}

\keywords{Nonlinear Schr\"odinger equation, bifurcation theory, continuation method}

\date{\today} 
	
\subjclass[2000]{Primary: 34C23, 34B15; Secondary: 35Q55, 34B60}

    
\maketitle

\section{Introduction} The Lugiato-Lefever equation \cite{Lugiato_Lefever1987} is the most commonly used model to describe electromagnetic fields inside a resonant cavity that is pumped by a strong continuous laser source. Inside the cavity the electromagnetic field propagates and suffers losses due to curvature and/or material imperfections. Most importantly, the cavity consists of a Kerr-nonlinear material so that triggered by modulation instability the field may experience a nonlinear interaction of the pumped and resonantly enhanced modes of the cavity. Under appropriate driving conditions of the resonant cavity and the laser, a stable Kerr-frequency comb may form in the cavity, which is a spatially localized and spectrally broad waveform. 

Since their discovery by the 2005 noble prize laureate Theodor H\"ansch, frequency combs have seen an enormously wide field of applications, e.g., in high capacity optical communications \cite{marin2017microresonator}, ultrafast optical ranging \cite{trocha2018ultrafast}, optical frequency metrology \cite{Udem2002}, or spectroscopy \cite{picque2019frequency,yang2017microresonator}. The Lugiato-Lefever equation (LLE) is an amplitude equation for the electromagnetic field inside the cavity derived by means of the slowly varying envelope approximation. 

In the following we assume that the cavity is a ring-shaped microresonator with normalized perimeter $2\pi$. Using dimensionless quantities and writing $u(x,t)=\sum_{k\in\Z} u_k(t)\eu^{\iu kx}$ for the slowly varying and $2\pi$-periodic amplitude of the electromagnetic field, the LLE in its original form \cite{Lugiato_Lefever1987} reads as  
\begin{equation}\label{LLE_original}
\mathrm{i} \partial_t u =- d \partial_x^2 u+(\zeta-\mathrm{i}\mu)u-|u|^2u+ \mathrm{i}f_0, \qquad (x,t) \in \mathbb{T} \times \mathbb{R},
\end{equation}
where $\mathbb{T}$ is a circle of length $2\pi$. The dispersion relation 
for the $k$-th Fourier mode of the resonator is given in the form
$\omega_k = \omega_0+d_1k+d_2k^2$ with $d := \frac{2}{\kappa}d_2$ being the normalized dispersion coefficient and $\kappa>0$ being the cavity decay rate. The detuning value $\zeta$ represents the off-set between the laser frequency $\omega_{p_0}$ and the closest resonance frequency $\omega_0$ of the zero-mode $k_0=0$ of the resonator, and the value $\mu$ quantifies the damping coefficient. Finally, $f_0$ stands for pump strength with power $|f_0|^2$. 

More recently, novel pumping schemes have been discussed \cite{Taheri_2017}, where instead of one monochromatic laser pump one uses a dual laser pump with two different frequencies as a source term. Using again dimensionless quantities the resulting equation is given by 
\begin{equation}\label{LLE_dual}
\mathrm{i} \partial_t u =- d \partial_x^2 u+(\zeta-\mathrm{i}\mu)u-|u|^2u+ \mathrm{i}f_0 + \mathrm{i}f_1\mathrm{e}^{\mathrm{i}(k_1 x-\nu_1 t)}, \qquad (x,t) \in \mathbb{T} \times \mathbb{R},
\end{equation}
cf. \cite{Gasmi_Jahnke_Kirn_Reichel,Gasmi_Peng_Koos_Reichel,Taheri_2017} for a detailed derivation. In contrast to \eqref{LLE_original} there is now a second source term with pump strength $f_1$ and $k_1$ stands for the second pumped mode (the first pumped mode is again $k_0=0$). This gives rise to two detuning variables $\zeta=\frac{2}{\kappa}(\omega_0-\omega_{p_0})$, $\zeta_1=\frac{2}{\kappa}(\omega_{k_1}-\omega_{p_1})$ and they define $\nu_1=\zeta-\zeta_1+d k_1^2$. One of the main outcomes of \cite{Gasmi_Peng_Koos_Reichel} is that the stationary states of \eqref{LLE_dual} are far more localized than the stationary states of \eqref{LLE_original}, and the best results can be achieved when $f_0=f_1$ among all power distributions such that $f_0^2+f_1^2$ is kept constant. 

However, there are cases where a power distribution $|f_0|\gg |f_1|$ is more adequate in physical experiments. In this case, it is shown in Appendix \ref{appA} that one can derive from \eqref{LLE_dual} the perturbed LLE 
in the form
\begin{equation}\label{TWE_dyn}
\iu \partial_t u = -d \partial_{x}^2 u + \iu \epsilon V(x) \partial_x u +(\zeta-\iu \mu) u -|u|^2 u + \iu f_0, \qquad (x,t) \in \mathbb{T} \times \mathbb{R},
\end{equation}
where in the physical context $V(x)=\omega_1-2dk_1^2\frac{f_1}{f_0}\cos(x)$ and $\epsilon=1$. However, if $\omega_1$ and $k_1^2f_1/f_0$ are small, we will consider \eqref{TWE_dyn} as the perturbed LLE with $\epsilon\in \R$ being small and $V\in C^1([-\pi,\pi],\R)$ being a generic periodic potential. Recall that \eqref{TWE_dyn} is already set in a moving coordinate frame. In its stationary form the equation becomes
\begin{equation}\label{TWE}
-d u''+\mathrm{i} \epsilon V(x) u'+(\zeta-\mathrm{i}\mu)u-|u|^2 u+\mathrm{i}f_0=0, \qquad x \in \mathbb{T}.
\end{equation}
The main questions addressed in this paper are the existence and stability of the stationary solution of \eqref{TWE_dyn}. Our main results, which are stated in detail in Section~\ref{sec:results}, can be summarized as follows:
\begin{itemize} 
\item In Theorem~\ref{Fortsetzung_nichttrivial} we prove existence of solutions of \eqref{TWE} for small $\epsilon$ provided the effective potential $V_{\text{eff}}$ changes sign, where $V_{\text{eff}}$ is a weighted integrated version of the coefficient function $V$.
\item In Theorems~\ref{thm:spectral_stability} and ~\ref{thm:nonlinear_stability} we prove stability/instability properties of the solution obtained from Theorem~\ref{Fortsetzung_nichttrivial} with the time evolution of \eqref{TWE_dyn}.
\item In Section~\ref{sec:numerical} we illustrate the findings of our theorems by numerical simulations. The numerical simulations show that the location of the intensity extremum of the $\epsilon$-continued solutions does not change significantly for small $\epsilon$. Therefore, we call this phenomenon \emph{pinning of solutions at zeroes of the effective potential $V_\text{eff}$}.
\end{itemize}
Existence and bifurcation behavior of solutions  of \eqref{LLE_original} have been studied quite well, cf. 
\cite{Gaertner_et_al,gaertner_reichel_waves,Godey_2017,Godey_et_al2014,Mandel,Miyaji_Ohnishi_Tsutsumi2010,Parra-Rivas2018,Parra-Rivas2014,Parra-Rivas2016} and their stability properties have been investigated in \cite{hara_delcey,DelHara_periodic,Hakkaev_Stefanov,hara_johns_perk, hara_johns_perk_derijk,Perinet,Stanislavova_Stefanov}. Analytical and numerical investigations of \eqref{LLE_dual} have recently been reported \cite{Gasmi_Jahnke_Kirn_Reichel,Gasmi_Peng_Koos_Reichel}. In contrast, we are not aware of any treatment of \eqref{TWE_dyn}. However, a related problem, where instead of $\mathrm{i} \epsilon V(x) u'$ a term of the form $\epsilon V(x) u$ appears in the NLS equation, has been quite well studied, 
cf. \cite{Alama,Sigal,Kev}. In this case solutions are pinned near nondegenerate critical points of $V_\text{eff}$ instead of the zeroes of $V_\text{eff}$ as in our case. 

\section{Main results} \label{sec:results}

In this section we present our main results regarding existence and stability of stationary solutions of \eqref{TWE_dyn}. For $\epsilon=0$ there is a plethora of non-trivial (non-constant) stationary solutions, cf. \cite{Gaertner_et_al, Mandel}. We start with such a solution under the assumption of its non-degeneracy according to the following definition. 

\begin{Definition} A non-constant solution $u\in H_\per^2([-\pi,\pi],\C)$ of \eqref{TWE} for $\epsilon=0$ is called non-degenerate if the kernel of the linearized operator 
$$
L_u\varphi := -d \varphi'' +(\zeta-\iu\mu-2|u|^2)\varphi-u^2\bar\varphi,  \quad \varphi\in H_\per^2([-\pi,\pi],\C)
$$
consists only of $\spann\{u'\}$.
\label{non_degenerate_solution}
\end{Definition}

\begin{Remark} \label{fredholm_etc} Note that $L_u: H_\per^2([-\pi,\pi],\C) \to L^2([-\pi,\pi],\C)$ is a compact perturbation of the isomorphism $-d\partial_x^2+\sign(d): H_\per^2([-\pi,\pi],\C) \to L^2([-\pi,\pi],\C)$ and hence a Fredholm operator. Notice also that $\spann\{u'\}$ always belongs to the kernel of $L_u$ due to translation invariance in $x$ for $\epsilon = 0$. Non-degeneracy means that except for the obvious candidate $u'$ (and its real multiples) there is no other element in the kernel of $L_u$.  
\end{Remark}

One can ask the question whether non-constant non-degenerate solutions at $\epsilon=0$ in Definition \ref{non_degenerate_solution} may be continued into the regime of $\epsilon\not =0$. In order to describe the continuation, we denote such a solution by $u_0$ 
and its spatial translations by $u_\sigma(x):= u_0(x-\sigma)$. The non-degeneracy assumption implies that $\ker L_{u_{\sigma}}=\spann\{u_\sigma'\}$. Since the adjoint operator $L_{u_\sigma}^*$ also has a one-dimensional kernel there exists $\phi_\sigma^*\in H_\per^2([-\pi,\pi],\C)$ such $\ker L_{u_\sigma}^*=\spann\{\phi_\sigma^*\}$. Notice that $\phi_\sigma^*(x) = \phi_0^*(x-\sigma)$. 

Before stating our existence result, let us clarify the assumption on the potential $V$.
\begin{itemize}
\item[(A1)] The potential $V:[-\pi,\pi]\to \R,x \mapsto V(x)$ is a $2\pi$-periodic, continuously differentiable function.
\end{itemize}
The existence result is given by the following theorem.

\begin{Theorem} \label{Fortsetzung_nichttrivial} Let $d \in \R\setminus\{0\},f_0,\zeta,\mu \in \R$ be fixed and assume that (A1) holds. Let furthermore $u_0\in H_\per^2([-\pi,\pi],\C)$ be a non-constant, non-degenerate solution of \eqref{TWE} for $\epsilon=0$. If $\sigma_0$ is a simple zero of the function
\begin{equation} \label{eq:sigma_0}
\sigma \mapsto V_\mathrm{eff}(\sigma):= \RT \int_{-\pi}^{\pi} \iu V(x+\sigma) u_0'\bar\phi_0^*\,dx
\end{equation}
then there exists a continuous curve $(-\epsilon^\ast,\epsilon^\ast) \ni \epsilon \to u(\epsilon) \in H_\per^2([-\pi,\pi],\C)$ consisting of solutions of \eqref{TWE} with $\|u(\epsilon)-u_0(\cdot-\sigma_0)\|_{H^2} \leq C\epsilon$ for some constant $C>0$.
\end{Theorem}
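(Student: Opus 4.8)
The natural approach is a Lyapunov--Schmidt reduction combined with the implicit function theorem, carried out in a coordinate frame that factors out the translation symmetry at $\epsilon = 0$. First I would set up the nonlinear map $F(u,\epsilon) := -d u'' + \iu \epsilon V(x) u' + (\zeta - \iu\mu) u - |u|^2 u + \iu f_0$, viewed as a $C^1$ (indeed analytic in $u$) map $F: H^2_\per([-\pi,\pi],\C) \times \R \to L^2([-\pi,\pi],\C)$, where $\C$-valued spaces are regarded as real Banach spaces so that the antiholomorphic term $|u|^2u$ poses no differentiability issue. At $\epsilon = 0$ the whole translation orbit $\{u_\sigma : \sigma \in \R\}$ solves $F(\cdot,0) = 0$, and $D_u F(u_\sigma, 0) = L_{u_\sigma}$ has, by non-degeneracy, one-dimensional kernel $\spann\{u_\sigma'\}$ and (via the Fredholm property from Remark~\ref{fredholm_etc}) one-dimensional cokernel spanned by the functional $\varphi \mapsto \RT\int_{-\pi}^\pi \varphi \bar\phi_\sigma^*\,dx$.

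The plan is then to look for a solution in the form $u = u_\sigma + w$ with $\sigma$ near $\sigma_0$ and $w$ in a fixed complement $X := \{w \in H^2_\per : \RT\int_{-\pi}^\pi w \,\overline{u_{\sigma_0}'}\,dx = 0\}$ of $\ker L_{u_{\sigma_0}}$, and to split the equation $F(u_\sigma + w, \epsilon) = 0$ by the Riesz projection $Q$ onto $\range L_{u_{\sigma_0}}$. The range equation $Q F(u_\sigma + w, \epsilon) = 0$ is solved first: at $(\sigma,w,\epsilon) = (\sigma_0, 0, 0)$ its $w$-derivative is $Q L_{u_{\sigma_0}}|_X$, which is an isomorphism from $X$ onto $\range L_{u_{\sigma_0}}$, so the implicit function theorem yields a $C^1$ map $(\sigma,\epsilon) \mapsto w(\sigma,\epsilon)$ with $w(\sigma_0,0) = 0$ and, by a standard estimate on the $\epsilon$-derivative, $\|w(\sigma,\epsilon)\| = O(|\sigma - \sigma_0| \cdot \text{(something)} + |\epsilon|)$; more precisely $w(\sigma,0) = 0$ for all $\sigma$ near $\sigma_0$ since $u_\sigma$ itself solves the $\epsilon=0$ equation, which gives $\|w(\sigma,\epsilon)\| \le C|\epsilon|$. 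It remains to solve the one-dimensional bifurcation equation $g(\sigma,\epsilon) := \RT\int_{-\pi}^\pi F(u_\sigma + w(\sigma,\epsilon),\epsilon)\,\overline{\phi_{\sigma_0}^*}\,dx = 0$.

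The key computation is that $g(\sigma,0) \equiv 0$ (again because $u_\sigma$ solves the unperturbed equation) and that $\partial_\epsilon g(\sigma,0) = \RT\int_{-\pi}^\pi \iu V(x) u_\sigma' \,\overline{\phi_{\sigma_0}^*}\,dx$, since the $\epsilon$-derivative of $F$ at $\epsilon=0$ evaluated on the orbit is exactly $\iu V(x) u_\sigma'$ and the contribution of $\partial_\epsilon w$ lies in $\range L_{u_{\sigma_0}} = \ker(\phi_{\sigma_0}^*)^\perp$, hence is annihilated. Using $\phi_{\sigma_0}^* = \phi_0^*(\cdot - \sigma_0)$, $u_\sigma' = u_0'(\cdot - \sigma)$ and a change of variables, one identifies $\partial_\epsilon g(\sigma_0, 0)$ with $V_{\mathrm{eff}}'(\sigma_0)$ (up to harmless reindexing), which is nonzero precisely because $\sigma_0$ is a \emph{simple} zero of $V_{\mathrm{eff}}$. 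Writing $g(\sigma,\epsilon) = \epsilon\, h(\sigma,\epsilon)$ with $h$ continuous, $h(\sigma_0,0) = V_{\mathrm{eff}}'(\sigma_0) \neq 0$, a final application of the implicit function theorem to $h(\sigma,\epsilon) = 0$ — or directly to $g(\sigma,\epsilon)/\epsilon$ — produces a $C^1$ branch $\epsilon \mapsto \sigma(\epsilon)$ with $\sigma(0) = \sigma_0$, and setting $u(\epsilon) := u_{\sigma(\epsilon)} + w(\sigma(\epsilon),\epsilon)$ gives the claimed solution curve with $\|u(\epsilon) - u_0(\cdot - \sigma_0)\|_{H^2} \le C|\epsilon|$, the estimate following from $|\sigma(\epsilon)-\sigma_0| \le C|\epsilon|$ and $\|w\| \le C|\epsilon|$.

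The main obstacle is the degeneracy of the linearization caused by the translation symmetry: $D_u F(u_0,0)$ is not invertible, so the plain implicit function theorem in $(u,\epsilon)$ fails, and one must be careful that the reduced function $V_{\mathrm{eff}}$ genuinely captures the solvability obstruction — in particular that $\partial_\epsilon g(\sigma_0,0)$ equals $V_{\mathrm{eff}}'$ and not merely $V_{\mathrm{eff}}$. The resolution is exactly that $g(\cdot,0)$ vanishes identically, so that the bifurcation equation must be divided by $\epsilon$ before the final IFT step; differentiating $g$ in $\sigma$ at $\epsilon=0$ then brings down the derivative of $V_{\mathrm{eff}}$, and simplicity of the zero is what makes the reduced equation solvable. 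Everything else — Fredholm property, smoothness of $F$, the $O(\epsilon)$ bounds — is routine.
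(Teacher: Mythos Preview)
Your overall strategy---Lyapunov--Schmidt reduction followed by two applications of the implicit function theorem---is correct and is exactly what the paper does. There is, however, a bookkeeping error that makes the argument as written internally inconsistent.

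You assert that $\partial_\epsilon g(\sigma_0,0) = h(\sigma_0,0) = V_{\mathrm{eff}}'(\sigma_0) \neq 0$. But from your own formula,
\[
\partial_\epsilon g(\sigma_0,0)=\RT\int_{-\pi}^\pi \iu V(x)\,u_{\sigma_0}'\,\overline{\phi_{\sigma_0}^*}\,dx
= \RT\int_{-\pi}^\pi \iu V(x+\sigma_0)\,u_0'\,\overline{\phi_0^*}\,dx
= V_{\mathrm{eff}}(\sigma_0) = 0,
\]
not $V_{\mathrm{eff}}'(\sigma_0)$. This is not cosmetic: if $h(\sigma_0,0)$ were nonzero you could not solve $h(\sigma,\epsilon)=0$ near $(\sigma_0,0)$ at all, and there would be no branch through $\sigma_0$. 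The correct picture---and the one the paper uses---is that $h(\sigma,0)=V_{\mathrm{eff}}(\sigma)$, so $h(\sigma_0,0)=0$ (this is what singles out $\sigma_0$), and the hypothesis needed for the final IFT step is
\[
\partial_\sigma h(\sigma_0,0)=V_{\mathrm{eff}}'(\sigma_0)\neq 0,
\]
which is precisely the simplicity of the zero. Your closing paragraph gestures at this (``differentiating $g$ in $\sigma$\ldots brings down the derivative of $V_{\mathrm{eff}}$''), but as written the roles of $V_{\mathrm{eff}}$ and $V_{\mathrm{eff}}'$ are swapped.

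One smaller point: the paper projects with $\sigma$-dependent data (orthogonality to $u_\sigma'$, testing against $\phi_\sigma^*$) rather than fixing everything at $\sigma_0$. Both choices work, but the $\sigma$-dependent version makes the identity $h(\sigma,0)=V_{\mathrm{eff}}(\sigma)$ exact for all $\sigma$, so the $\sigma$-derivative is read off directly without having to argue that extra terms such as $\langle L_{u_\sigma}\partial_\epsilon w,\phi_{\sigma_0}^*\rangle$ vanish only at $\sigma=\sigma_0$.
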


\begin{Remark}
The value of $\sigma_0$ is determined from the existence of a unique solution $v\in H_\per^2([-\pi,\pi],\C)$ of the linear inhomogeneous equation
	$$
	L_{u_{\sigma_0}}v =-\iu V(x) u_{\sigma_0}'
	$$
with the property that $v\perp_{L^2} u_{\sigma_0}'$. Fredholm's condition shows that $\sigma_0$ is a zero of $V_{\mathrm{eff}}$. Simplicity of the zero of $V_{\mathrm{eff}}$ yields the result of Theorem \ref{Fortsetzung_nichttrivial}.
	\end{Remark}

To investigate the stability of a stationary solution $u$ we introduce the expansion
$$
u(x) + v(x,t) = u_1(x) + \iu u_2(x) + v_1(x,t) + \iu v_2(x,t)
$$
and substitute this into the perturbed LLE \eqref{TWE_dyn}. After neglecting the quadratic and cubic terms in $v$ and separating real and imaginary parts we obtain the linearized system for $\bm v = (v_1,v_2)$ which reads as
$$
\partial_t \bm v = \widetilde{L}_{u,\epsilon} \bm{v}
$$
and the linearization has the form
\begin{equation}
\label{decomposition}
\widetilde{L}_{u,\epsilon} = J A_u - I (\mu -\epsilon V(x) \partial_x)
\end{equation}
with
\begin{align*}
J :=
\begin{pmatrix}
0 & 1 \\
-1 & 0
\end{pmatrix},\;\;
I :=
\begin{pmatrix}
1 & 0 \\
0 & 1
\end{pmatrix}, \;\;
A_u:= 
\begin{pmatrix}
-d\partial_x^2 + \zeta - (3 u_1^2 + u_2^2) & -2u_1 u_2 \\
-2u_1 u_2 & -d\partial_x^2 + \zeta - (u_1^2 + 3u_2^2)
\end{pmatrix}.
\end{align*}
In the following we will often identify functions in $\C$ as vector-valued functions in $\R \times \R$ and use the notation 
$$
u = u_1 + \iu u_2 \in \C \quad\leftrightarrow\quad \bm u = \begin{pmatrix}u_1 \\ u_2 \end{pmatrix} \in \R^2.
$$
We denote the spectrum of $\widetilde{L}_{u,\epsilon}$ in $L^2([-\pi,\pi]) \times L^2([-\pi,\pi])$ by $\sigma(\widetilde{L}_{u,\epsilon})$ and the resolvent set 
	of $\widetilde{L}_{u,\epsilon}$ by $\rho(\widetilde{L}_{u,\epsilon})$.

For our stability results we require one additional spectral assumption on the non-degenerate solution $u_0$ regarding the spectrum of $\widetilde{L}_{u_0,0}$.
\begin{itemize}
\item[(A2)] The eigenvalue $0 \in \sigma(\widetilde{L}_{u_0,0})$ is algebraically simple and there exists $\xi > 0$ such that 
$$
\sigma(\widetilde{L}_{u_0,0}) \subset \{z\in \C: \RT z \leq -\xi\} \cup \{0\}. 
$$
\end{itemize}

\begin{Remark}
By Fredholm theory, the assumption of simplicity of the zero eigenvalue of $\widetilde{L}_{u_0,0}$ is equivalent to $\bm  u_0' \not\in \range \widetilde{L}_{u_0,0} = \spann\{J \bm\phi_0^*\}^\perp$. It will be convenient to use the normalization $\langle \bm u_0', J\bm\phi_0^* \rangle_{L^2} = \int_{-\pi}^\pi  \bm u_0' \cdot J\bm\phi_0^* \,dx = 1$. We also note that 
$$
\int_{-\pi}^\pi  \bm u_0' \cdot J\bm\phi_0^* \,dx = \RT \int_{-\pi}^{\pi} \iu u_0'\bar\phi_0^*\,dx.
$$
\label{rem-kernel}
 \end{Remark}

Before stating the stability results, let us clarify that $\ker L_{u}^*$ and $\ker L_{u}$ are linearly independent so that $V_{\mathrm{eff}}$ is generically nonzero. We also clarify the parity of eigenfunctions in $\ker L_{u}^*$ and $\ker L_{u}$ if $u_0$ is even in $x$. This is used for many practical computations.

\begin{Lemma}\label{lem:parity}
	Let $u_0 \in H^2_{\text{per}}([-\pi,\pi],\C)$ be a non-constant, non-degenerate solution of \eqref{TWE} for $\epsilon=0$. 
	Then the following holds:
	\begin{itemize} 
		\item[(i)] $u_0'$ and $\phi_0^*$ are linearly independent,
		\item[(ii)] if $u_0$ is even then $\phi_0^*$ is odd.
	\end{itemize}
\end{Lemma}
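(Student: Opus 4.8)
The plan is to prove both parts by exploiting the structure of the operator $L_{u_0}$ and its adjoint, together with the reflection symmetry $x \mapsto -x$ inherited from \eqref{TWE} at $\epsilon = 0$. Recall that $L_{u_0}$ acts as a real-linear (but not complex-linear) operator on $H^2_{\mathrm{per}}([-\pi,\pi],\C)$ because of the conjugation $u_0^2\bar\varphi$; its natural adjoint $L_{u_0}^*$ with respect to the real inner product $\langle f,g\rangle = \RT \int_{-\pi}^\pi f\bar g\,dx$ is computed by integrating by parts, and one finds $L_{u_0}^*\psi = -d\psi'' + (\zeta+\iu\mu - 2|u_0|^2)\psi - \bar{u_0}^2\bar\psi$ (the sign of $\mu$ flips and the multiplier $u_0^2$ is replaced by its conjugate). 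Since $u_0$ is non-degenerate, $\ker L_{u_0} = \spann\{u_0'\}$ is one-dimensional over $\R$, and by Fredholm theory for this real-linear operator $\ker L_{u_0}^*$ is also one-dimensional, spanned by some $\phi_0^*$.

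For part (i), the point is that linear independence of $u_0'$ and $\phi_0^*$ over $\R$ is equivalent, by the Fredholm alternative applied to $L_{u_0}$, to the solvability condition $\langle u_0', \phi_0^*\rangle \neq 0$ — indeed if $u_0'$ and $\phi_0^*$ were real-proportional then $\langle u_0', \phi_0^* \rangle \neq 0$ automatically (both nonzero), so I would instead argue the contrapositive direction that matters for $V_{\mathrm{eff}}$: what actually needs ruling out for the construction is $\langle u_0', \phi_0^* \rangle = 0$, i.e.\ $u_0' \in \range L_{u_0}$. Wait — re-reading the statement, part (i) literally asks for $u_0'$ and $\phi_0^*$ to be linearly independent \emph{as elements of the real vector space}, which would follow if, say, $u_0'$ has mean zero forced by periodicity while $\phi_0^*$ need not, but that alone is not enough. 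The clean route: suppose $\phi_0^* = c\, u_0'$ for some $c \in \R\setminus\{0\}$. Then $u_0'$ lies in $\ker L_{u_0} \cap \ker L_{u_0}^*$. Subtracting the two equations $L_{u_0}u_0' = 0$ and $L_{u_0}^* u_0' = 0$ and taking the appropriate real/imaginary combination kills the common second-derivative and $|u_0|^2$ terms and leaves $2\iu\mu\, u_0' - (u_0^2 - \bar u_0^2)\overline{u_0'} = 0$, i.e.\ a pointwise algebraic relation; if $\mu \neq 0$ this is typically impossible for a non-constant $u_0'$, and if $\mu = 0$ one must work a little harder using that $L_{u_0}$ is then self-adjoint and still derive a contradiction with non-constancy (e.g.\ via a unique-continuation / ODE uniqueness argument on the second-order system). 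I expect this is the main obstacle: handling the degenerate-looking case $\mu = 0$ cleanly, and being careful that "linearly independent" is over $\R$ not $\C$.

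For part (ii), assume $u_0$ is even. The key observation is that the map $(Rf)(x) := f(-x)$ (complex reflection, no conjugation) commutes with $L_{u_0}$ when $u_0$ is even: since $u_0(-x) = u_0(x)$, the coefficients $|u_0|^2$ and $u_0^2$ are even, and $\partial_x^2$ is reflection-invariant, so $R L_{u_0} R = L_{u_0}$, and similarly $R L_{u_0}^* R = L_{u_0}^*$. Hence $R$ preserves both $\ker L_{u_0} = \spann\{u_0'\}$ and $\ker L_{u_0}^* = \spann\{\phi_0^*\}$. On $\spann\{u_0'\}$ the operator $R$ acts as multiplication by $-1$ because $u_0'$ is odd (derivative of an even function), so $R$ restricted to this line has eigenvalue $-1$; the eigenfunction $\phi_0^*$ of the one-dimensional invariant subspace $\ker L_{u_0}^*$ must likewise be an eigenfunction of $R$, with eigenvalue $\pm 1$, i.e.\ $\phi_0^*$ is either even or odd. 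To pin down the sign $-1$, I would invoke the normalization / solvability pairing: $\langle u_0', \phi_0^*\rangle \neq 0$ by part (i), and $\langle u_0', \phi_0^*\rangle = \RT\int_{-\pi}^\pi \iu u_0'\bar\phi_0^*\,dx$ (the identity noted in Remark~\ref{rem-kernel}); writing the integrand's parity in terms of the parities of $u_0'$ (odd) and $\phi_0^*$, one sees that if $\phi_0^*$ were even the integrand would be odd and the integral would vanish, contradicting part (i). Therefore $\phi_0^*$ is odd. The only subtlety to check is that $R$ indeed acts with eigenvalue $\pm1$ on a one-dimensional $R$-invariant subspace — immediate since $R^2 = \mathrm{id}$ — so this part is comparatively routine once part (i) is in hand; the real work, and the main obstacle, remains the independence statement in (i), especially the $\mu = 0$ subcase.
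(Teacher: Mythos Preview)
Your treatment of part~(i), once you settle on the subtraction argument, is the paper's proof in disguise. The paper rewrites $L_{u_0}u_0'=0$ and $L_{u_0}^*\phi_0^*=0$ in vector form as $JA_{u_0}\bm u_0' = \mu\,\bm u_0'$ and $JA_{u_0}\bm\phi_0^* = -\mu\,\bm\phi_0^*$, and then simply notes that eigenvectors for distinct eigenvalues $\pm\mu$ of the same operator are linearly independent. Your pointwise relation is the same content: if $\phi_0^* = c\,u_0'$ then $\bm u_0'$ is an eigenvector for both $\mu$ and $-\mu$, hence $2\mu\,\bm u_0' = 0$, contradicting non-constancy of $u_0$. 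There is no need to analyse the relation ``pointwise'' or to say ``typically impossible''. Both your argument and the paper's tacitly require $\mu\neq 0$; neither covers $\mu = 0$.

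Part~(ii) contains a genuine gap. You correctly argue that the reflection $R$ commutes with $L_{u_0}^*$, so $R$ preserves the one-dimensional kernel and $\phi_0^*$ is either even or odd. But to exclude the even case you invoke ``$\langle u_0',\phi_0^*\rangle \neq 0$ by part~(i)'' and then identify this pairing with $\RT\int_{-\pi}^\pi \iu\, u_0'\bar\phi_0^*\,dx$. Neither step is justified. First, part~(i) asserts \emph{linear independence}, not that any pairing is nonzero; two orthogonal nonzero vectors are independent. Second, $\RT\int \iu\, u_0'\bar\phi_0^*\,dx = \langle\bm u_0',J\bm\phi_0^*\rangle_{L^2}$ being nonzero is exactly the algebraic simplicity statement in assumption~(A2), which the lemma does \emph{not} assume. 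So your parity conclusion currently rests on an unproven premise.

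The paper avoids this by a Fredholm-index argument that makes no use of any pairing: since $u_0'$ is odd, the restriction $L_{u_0}^\#$ of $L_{u_0}$ to the odd subspace is again an index-$0$ Fredholm operator with one-dimensional kernel $\spann\{u_0'\}$. Its adjoint is the restriction $(L_{u_0}^*)^\#$ of $L_{u_0}^*$ to odd functions, which therefore also has a one-dimensional kernel inside the odd subspace. But $\dim\ker L_{u_0}^* = 1$ on the full space as well, so this odd kernel must equal $\spann\{\phi_0^*\}$, and $\phi_0^*$ is odd. This argument works under non-degeneracy alone.
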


\begin{proof} Part (i): By using the decomposition (\ref{decomposition}) with $u = u_0$ and $\epsilon = 0$, the eigenvalue problems $L_{u_0} u_0' = 0$ and $L_{u_0}^* \phi_0^* = 0$ are equivalent to
	$$
	JA_{u_0}
	\begin{pmatrix}
	u_{01}' \\ u_{02}'
	\end{pmatrix} 
	= \mu
	\begin{pmatrix}
	u_{01}' \\ u_{02}'
	\end{pmatrix}
	,\quad\quad 
	JA_{u_0}
	\begin{pmatrix}
	\phi_{01}^* \\ \phi_{02}^*
	\end{pmatrix} 
	=-\mu
	\begin{pmatrix}
	\phi_{01}^* \\ \phi_{02}^*
	\end{pmatrix}.
	$$
	But since $(u_{01}',u_{02}')$ and $(\phi_{01}^*,\phi_{02}^*)$ are eigenvectors to the different eigenvalues $\mu$ and $-\mu$ of $JA_{u_0}$, respectively, they are linearly independent.
	
	\medskip
	
	Part (ii): By assumption we have that $\ker L_{u_0} = \spann\{u_0'\}$ and $u_0'$ is an odd function. Let us define the restriction of $L_{u_0}$ onto the odd functions
	$$
	L_{u_0}^\#:	H_{\text{per,odd}}^2  \to  L_{\text{per,odd}}^2,\,
	\varphi \mapsto L_{u_0} \varphi.
	$$
	Then $L_{u_0}^\#$ is again an index $0$ Fredholm operator with $\ker L_{u_0}^\#= \spann\{u_0'\}$. Further we have $(L_{u_0}^\#)^* = (L_{u_0}^*)^\#$ where
	$$
	(L_{u_0}^*)^\#:	H_{\text{per,odd}}^2 \to L_{\text{per,odd}}^2, \,
	\varphi  \mapsto L_{u_0}^* \varphi
	$$ 
	is the restriction of the adjoint onto the odd functions.
	But since $1=\dim \ker (L_{u_0}^*)^\# = \dim \ker L_{u_0}^*$ it follows that $\ker (L_{u_0}^*)^\# = \ker L_{u_0}^*$ and hence $\phi_0^* \in H_{\text{per,odd}}^2$ as claimed.
\end{proof}

%

The stability results are given by the following two theorems. A stationary solution $u$ of \eqref{TWE} is called spectrally stable if $\RT(\lambda) \leq 0$ for all eigenvalues $\lambda$ of $\widetilde{L}_{u,\epsilon}$. It is called spectrally unstable if there exists one eigenvalue $\lambda$ with $\RT(\lambda)>0$. 

\begin{Theorem}\label{thm:spectral_stability} Let $d \in \R\setminus\{0\}, f_0,\zeta,\mu\in \R$ be fixed and assume that (A1) and (A2) hold. With $\sigma_0$ being a simple zero of $V_{\mathrm{eff}}$ as in Theorem~\ref{Fortsetzung_nichttrivial}, we have 
$$
V_{\mathrm{eff}}'(\sigma_0)=\RT \int_{-\pi}^{\pi} \iu V'(x+\sigma_0)  u_0'\bar\phi_0^* \,dx = \langle V'(\cdot + \sigma_0) \bm u_0', J\bm\phi_0^* \rangle_{L^2} \not = 0.
$$
Then there exists $\epsilon_0>0$ such that on the solution branch 
$(-\epsilon_0,\epsilon_0) \ni \epsilon \to u(\epsilon) \in  H_{\per}^2([-\pi,\pi],\C)$ of \eqref{TWE} with $u(0)=u_{\sigma_0}$ the solutions $u(\epsilon)$ are spectrally stable for $V'_{\mathrm{eff}}(\sigma_0) \cdot\epsilon >0$ and spectrally unstable for $V'_{\mathrm{eff}}(\sigma_0) \cdot\epsilon<0$. 
\end{Theorem}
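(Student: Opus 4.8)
The plan is to track the simple zero eigenvalue of $\widetilde{L}_{u_0,0}$ under the perturbation by $\epsilon$ and show that it moves off the imaginary axis with speed proportional to $-V_{\mathrm{eff}}'(\sigma_0)\,\epsilon$, while all other spectrum stays safely in the left half-plane. First I would set $u(\epsilon) = u_{\sigma_0} + \epsilon w(\epsilon)$ from Theorem~\ref{Fortsetzung_nichttrivial} and write $\widetilde{L}_{u(\epsilon),\epsilon} = \widetilde{L}_{u_{\sigma_0},0} + \epsilon M(\epsilon)$, where $M(\epsilon) = -I\,V(x)\partial_x + (\text{bounded terms coming from the } A_u\text{-dependence on } u(\epsilon))$; the key point is that $M(0) = -I\,V(x)\partial_x + (\partial_u A)[w(0)]J$-type corrections, but only its action on the kernel eigenfunction will matter. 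Since $0$ is algebraically simple for $\widetilde{L}_{u_{\sigma_0},0}$ with right eigenvector $\bm u_{\sigma_0}'$ and left eigenvector $J\bm\phi_{\sigma_0}^*$ (both inherited from Remark~\ref{rem-kernel} by translation), standard analytic perturbation theory of isolated eigenvalues (Kato) gives a $C^1$ branch $\epsilon \mapsto \lambda(\epsilon)$ with $\lambda(0)=0$ and
$$
\lambda'(0) = \frac{\langle M(0)\,\bm u_{\sigma_0}', J\bm\phi_{\sigma_0}^*\rangle_{L^2}}{\langle \bm u_{\sigma_0}', J\bm\phi_{\sigma_0}^*\rangle_{L^2}} = \langle M(0)\,\bm u_{\sigma_0}', J\bm\phi_{\sigma_0}^*\rangle_{L^2},
$$
using the normalization $\langle \bm u_{\sigma_0}', J\bm\phi_{\sigma_0}^*\rangle_{L^2}=1$ from Remark~\ref{rem-kernel}.

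The next step is to compute this leading coefficient and identify it with $-V_{\mathrm{eff}}'(\sigma_0)$. The term $-I\,V(x)\partial_x$ acting on $\bm u_{\sigma_0}'$ produces $-V(x)\,\bm u_{\sigma_0}''$, and pairing with $J\bm\phi_{\sigma_0}^*$ one integrates by parts; the contributions from the $w(0)$-dependent correction terms should be handled by differentiating the defining equation \eqref{TWE} with respect to $\sigma$ and $\epsilon$ and using that $v = w(0)$ solves $L_{u_{\sigma_0}}v = -\iu V(x)u_{\sigma_0}'$ (the equation in the Remark after Theorem~\ref{Fortsetzung_nichttrivial}). The cleanest route is to differentiate the identity $V_{\mathrm{eff}}(\sigma) = \RT\int \iu V(x+\sigma)u_0'\bar\phi_0^*\,dx$ — equivalently $\langle V(\cdot+\sigma)\bm u_0', J\bm\phi_0^*\rangle = \langle V(x)\bm u_\sigma', J\bm\phi_\sigma^*\rangle$ — and recognize that $V_{\mathrm{eff}}'(\sigma_0)$ is exactly (up to sign) the obstruction that appears when one tries to solve the second-order perturbation equation for the eigenvalue branch. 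I expect the bookkeeping to yield $\lambda'(0) = -V_{\mathrm{eff}}'(\sigma_0)$ (or $+$; the sign is fixed by the normalization), so that $\lambda(\epsilon) = -V_{\mathrm{eff}}'(\sigma_0)\,\epsilon + O(\epsilon^2)$ is real to leading order.

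With the eigenvalue branch controlled, I would then invoke (A2): the rest of $\sigma(\widetilde{L}_{u_0,0})$ lies in $\{\RT z \le -\xi\}$, and since $\widetilde{L}_{u(\epsilon),\epsilon}$ depends continuously (in operator norm on $H^2_{\mathrm{per}}$, or in the generalized sense of closed operators) on $\epsilon$, and its resolvent is compact-perturbation-controlled as in Remark~\ref{fredholm_etc}, the spectrum outside a small neighborhood of $0$ stays in $\{\RT z \le -\xi/2\}$ for $|\epsilon|$ small, while inside that neighborhood the only spectrum is the simple eigenvalue $\lambda(\epsilon)$. Hence $\RT\lambda(\epsilon) < 0$ for $V_{\mathrm{eff}}'(\sigma_0)\cdot\epsilon > 0$ (spectral stability) and $\RT\lambda(\epsilon) > 0$ for $V_{\mathrm{eff}}'(\sigma_0)\cdot\epsilon < 0$ (spectral instability), for $0 < |\epsilon| < \epsilon_0$. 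The main obstacle I anticipate is the correct and careful evaluation of $\lambda'(0)$: one must show that the $O(\epsilon)$ correction $w(0)$ to the solution does not contribute a term that spoils the clean identification with $V_{\mathrm{eff}}'(\sigma_0)$, which requires using the solvability (Fredholm) relation for $v=w(0)$ together with the fact that $\langle \bm u_\sigma', J\bm\phi_\sigma^*\rangle$ is $\sigma$-independent; a secondary technical point is justifying the perturbation expansion of the eigenvalue for the non-self-adjoint operator $\widetilde{L}$, which is handled by the algebraic simplicity in (A2) plus analyticity of $\epsilon\mapsto\widetilde{L}_{u(\epsilon),\epsilon}$.
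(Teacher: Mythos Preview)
Your tracking of the critical eigenvalue is essentially the paper's Lemma~\ref{lem:eig_0}: both compute $\lambda'(0)=-V_{\mathrm{eff}}'(\sigma_0)$ by differentiating the eigenvalue relation and the stationary equation in~$\epsilon$, then eliminating the $\partial_\epsilon u(0)$-dependent terms via the adjoint kernel. Your anticipated ``obstacle'' about $w(0)$ is exactly what the paper handles by differentiating \eqref{TWE} first in $\epsilon$ and then in $x$ and subtracting; your proposed route through $M(0)$ leads to the same cancellation once you pair with $J\bm\phi_{\sigma_0}^*$.

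The genuine gap is in your treatment of the remaining spectrum. You write that continuity of $\epsilon\mapsto\widetilde{L}_{u(\epsilon),\epsilon}$ together with a ``compact-perturbation'' argument keeps the rest of the spectrum in $\{\RT z\le -\xi/2\}$. This is not sufficient: the perturbation $\epsilon V(x)\partial_x$ is only relatively bounded, not bounded, and Kato's continuity of eigenvalues is \emph{local}, not uniform over the whole spectrum. Nothing in Remark~\ref{fredholm_etc} (which only asserts Fredholmness) prevents eigenvalues with large imaginary part from drifting across the imaginary axis as $\epsilon$ moves off zero. The paper identifies this explicitly (``this dependence is in general not uniform w.r.t.\ all eigenvalues, so we have to make sure that no unstable spectrum occurs far from the origin'') and closes the gap with Lemma~\ref{lem:resolvent_est}, a uniform resolvent bound on $\Lambda_{\lambda^*}=\{\RT\lambda\ge 0,\ |\IT\lambda|\ge\lambda^*\}$, whose proof (Section~\ref{sec:proof_resolvent_est}) is nontrivial: it uses a gauge rotation to make the first-order term constant-coefficient, a Fourier-multiplier analysis, and a Lyapunov--Schmidt splitting at the resonant mode $k_0(\omega)$. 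Only with this lemma in hand can one reduce to finitely many eigenvalues in a rectangle and invoke (A2). Your proposal needs either this lemma or an equivalent uniform estimate; without it the argument is incomplete.
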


\begin{Theorem}\label{thm:nonlinear_stability}
Let $u(\epsilon) \in H_\per^2([-\pi,\pi], \C)$ be a spectrally stable stationary solution of \eqref{TWE_dyn} for a small value of $\epsilon$ as in Theorem \ref{thm:spectral_stability}. Then $u(\epsilon)$ is asymptotically stable, i.e., there exist $\eta, \delta, C>0$ with the following properties. If $\varphi \in C([0,T),H^1_{\text{per}}([-\pi,\pi],\C))$ is a solution of \eqref{TWE_dyn} with maximal existence time $T$ and 
$$
\|\varphi(\cdot,0) - u(\epsilon) \|_{H^1} < \delta
$$ 
then $T=\infty$ and
$$
\|\varphi(\cdot,t) - u(\epsilon) \|_{H^1} \leq C \eu^{-\eta t} \|\varphi(\cdot,0) - u(\epsilon) \|_{H^1} \quad\text{for all } t\geq 0.
$$
\end{Theorem}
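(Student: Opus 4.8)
\emph{Strategy.} The plan is to deduce asymptotic stability from exponential decay of the semigroup generated by $\widetilde{L}_{u(\epsilon),\epsilon}$ together with a Duhamel bootstrap in $H^1_\per$, so the heart of the proof is the linear estimate $\|\eu^{t\widetilde{L}_{u(\epsilon),\epsilon}}\|_{H^1_\per\to H^1_\per}\le C\eu^{-\eta t}$ for some $\eta>0$. First I would record the structure of $\widetilde{L}_{u(\epsilon),\epsilon}$ from \eqref{decomposition}: isolating the second order part, write $\widetilde{L}_{u(\epsilon),\epsilon}=\mathcal{S}+B$, where $\mathcal{S}$ collects $J\,\mathrm{diag}(-d\partial_x^2,-d\partial_x^2)$ together with the skew-adjoint part of $\epsilon V(x)\partial_x I$ and is therefore skew-adjoint on $L^2\times L^2$, while $B$ — the zeroth order matrix multiplication coming from $A_{u(\epsilon)}$, the term $-\mu I$, and $\tfrac12\epsilon V'(x)I$ — is bounded. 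Hence $\widetilde{L}_{u(\epsilon),\epsilon}$ generates a $C_0$-group on $L^2\times L^2$ and, by the same energy estimates applied to $\partial_x\bm v$, on $H^1_\per\times H^1_\per$; moreover it has compact resolvent on the torus, so $\sigma(\widetilde{L}_{u(\epsilon),\epsilon})$ consists of isolated eigenvalues of finite multiplicity that accumulate only at infinity and whose real parts tend to $-\mu$. Combining this with the proof of Theorem~\ref{thm:spectral_stability} — where, under $V'_{\mathrm{eff}}(\sigma_0)\cdot\epsilon>0$, the algebraically simple eigenvalue $0$ of $\widetilde{L}_{u_0,0}$ moves strictly into the open left half-plane while the remaining spectrum stays in $\{\RT z\le-\xi/2\}$ — yields a spectral gap: there is $\beta=\beta(\epsilon)>0$ (necessarily $\beta\le\mu$) with $\sigma(\widetilde{L}_{u(\epsilon),\epsilon})\subset\{z\in\C:\RT z\le-\beta\}$.

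To pass from the spectral gap to semigroup decay I would invoke the Gearhart--Pr\"uss theorem on the Hilbert space $H^1_\per\times H^1_\per$: fixing $0<\eta<\beta$, it remains to show $\sup_{\RT z>-\eta}\|(\widetilde{L}_{u(\epsilon),\epsilon}-z)^{-1}\|_{H^1_\per\to H^1_\per}<\infty$. For $\RT z$ large this follows from the group bound, and on $\{-\eta\le\RT z\le R_0,\ |\IT z|\le R_1\}$ it follows from continuity of the resolvent off the discrete spectrum; the substance is the bound for $|\IT z|\to\infty$, which I would prove by contradiction. If it failed there would be $z_n$ with $-\eta\le\RT z_n\le R_0$, $|\IT z_n|\to\infty$, $\|v_n\|_{H^1}=1$ and $(\widetilde{L}_{u(\epsilon),\epsilon}-z_n)v_n\to0$ in $H^1_\per$. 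Since $\|(\mathcal{S}-z_n)^{-1}\|=\operatorname{dist}(z_n,\sigma(\mathcal{S}))^{-1}$ is small unless $z_n$ is $O(1)$-close to some eigenvalue of $\mathcal{S}$ (and the gaps of $\{dk^2\}_{k\in\Z}$ near $|\IT z_n|$ grow like $\sqrt{|\IT z_n|}$, so at most one such eigenvalue exists for $n$ large), the relation $v_n=(\mathcal{S}-z_n)^{-1}(g_n-Bv_n)$ forces $v_n=c_n w_n+o(1)$ with $|c_n|\to1$, where $w_n=\eu^{\iu k_n x}\zeta+o(1)$, $\zeta\in\C^2$ a constant eigenvector of the (self-adjoint $\times\,J$) symbol of the principal part, and $\|w_n\|_{H^1}=1$. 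Testing $(\widetilde{L}_{u(\epsilon),\epsilon}-z_n)v_n$ against $w_n$ in $H^1_\per$ and using that $\zeta^*J\zeta$ and $\zeta^*J A_{u(\epsilon)}(x)\zeta$ are purely imaginary and that $|\zeta|^2$ is constant (hence pairs trivially against the mean-zero parts of $V$ and its derivatives), one finds that the zeroth and first order contributions of $\widetilde{L}_{u(\epsilon),\epsilon}$ reduce, in the limit, to $-\mu+i(\cdot)$, while the principal part minus $z_n$ contributes exactly $\eta+i(\cdot)$; thus the real part of the tested equation equals $\eta-\mu+o(1)$, which is bounded away from $0$ because $\eta<\beta\le\mu$, contradicting that the right-hand side $\langle g_n,w_n\rangle_{H^1}\to0$ while $|c_n|\to1$. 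I expect this uniform high-frequency resolvent estimate to be the main obstacle: the principal part of $\widetilde{L}_{u(\epsilon),\epsilon}$ is dispersive, not sectorial, so there is no analytic-semigroup shortcut and the concentration analysis above (or an equivalent argument) is unavoidable.

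Granted $\|\eu^{t\widetilde{L}_{u(\epsilon),\epsilon}}\|_{H^1_\per\to H^1_\per}\le C\eu^{-\eta t}$, the nonlinear part is routine. Local well-posedness of \eqref{TWE_dyn} in $H^1_\per$ holds because its linear part generates a $C_0$-group there, $H^1_\per([-\pi,\pi])$ is a Banach algebra in one dimension, the cubic nonlinearity is locally Lipschitz $H^1_\per\to H^1_\per$, and $\iu f_0\in H^1_\per$; this provides the maximal solution $\varphi$ of maximal existence time $T$ and the blow-up alternative. Setting $\varphi=u(\epsilon)+v$ and inserting into \eqref{TWE_dyn}, the perturbation solves $\partial_t v=\widetilde{L}_{u(\epsilon),\epsilon}v+R(v)$, where $R(v)$ is the quadratic-plus-cubic remainder of $-|u(\epsilon)+v|^2(u(\epsilon)+v)$, so $\|R(v)\|_{H^1}\le C(\|v\|_{H^1}^2+\|v\|_{H^1}^3)$. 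Duhamel's formula $v(t)=\eu^{t\widetilde{L}_{u(\epsilon),\epsilon}}v(0)+\int_0^t\eu^{(t-s)\widetilde{L}_{u(\epsilon),\epsilon}}R(v(s))\,ds$ together with the linear decay gives, for $M(t):=\sup_{0\le s\le t}\eu^{\eta s}\|v(s)\|_{H^1}$, the inequality $M(t)\le C\|v(0)\|_{H^1}+C'\bigl(M(t)^2+M(t)^3\bigr)$; a standard continuity argument then shows that if $\delta:=\|v(0)\|_{H^1}$ is sufficiently small then $M(t)\le 2C\delta$ on the whole interval $[0,T)$. This a priori $H^1$-bound excludes finite-time blow-up, so $T=\infty$, and it yields $\|\varphi(\cdot,t)-u(\epsilon)\|_{H^1}\le 2C\eu^{-\eta t}\|\varphi(\cdot,0)-u(\epsilon)\|_{H^1}$ for all $t\ge0$, which is the assertion.
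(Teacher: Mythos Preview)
Your overall architecture---generator property, spectral gap, Gearhart--Pr\"uss, then Duhamel bootstrap---matches the paper's, and the nonlinear closing argument is fine. Two structural differences are worth noting. First, the paper applies Gearhart--Pr\"uss on $L^2\times L^2$ and then transfers decay to $H^s$, $s\in[0,2]$, by complex interpolation using fractional powers of $A_{u(\epsilon)}+\gamma I$ (Lemma~\ref{lem:Lin_Stability_Hs}); you instead run Gearhart--Pr\"uss directly on $H^1$. Both are legitimate, but the interpolation route means one only has to establish the delicate high-frequency resolvent bound once, in $L^2$. Second, for that bound the paper does not argue by contradiction/concentration: it conjugates by a spatial rotation $R$ with angle $\theta(x)=\frac{\epsilon}{2d}\int_{-\pi}^x(V-\hat V_0)$, which converts $\epsilon V(x)\partial_x$ into the constant-coefficient operator $\epsilon\hat V_0\partial_x$ at the price of modifying only the bounded potential. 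After conjugation the principal part is a Fourier multiplier, and the resolvent is built explicitly: one splits off the at-most-two dangerous modes $e^{\pm ik_0 x}$ via projections $Q_1,Q_2$, inverts the remaining diagonal block with the gap estimate of Proposition~\ref{prop:estimate_for_determinant}, and handles the $2\times2$ blocks by computing their determinants and using Fourier decay of the potential to kill the $Q_1JWQ_2$ cross term.

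Your concentration argument, by contrast, leaves $\mathcal{S}$ with the variable-coefficient first-order piece, and this is where gaps appear. The spectrum of $\mathcal{S}$ is \emph{not} $\{\pm idk^2\}$, so invoking ``the gaps of $\{dk^2\}$'' requires a perturbation argument you do not supply; more seriously, each eigenvalue $\pm idk_0^2$ of $J(-d\partial_x^2)$ is double (eigenvectors $e^{ik_0 x}\zeta$ and $e^{-ik_0 x}\zeta$), so after the $\epsilon V\partial_x$ perturbation the relevant eigenspace of $\mathcal{S}$ is two-dimensional and $v_n$ need not concentrate on a single plane wave---your testing step, which assumes a single $w_n=e^{ik_nx}\zeta+o(1)$, does not cover this. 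The paper's $Q_1/Q_2$ splitting together with $\|Q_1JWQ_2\|=O(|\hat W_{2k_0}|)=O(\omega^{-1/2})$ is exactly what resolves the coupling between the two modes, and there is no obvious analogue in your sketch. Finally, working in $H^1$ forces $\|e^{ik_nx}\zeta\|_{H^1}\sim k_n$, so the normalization of $w_n$ and the bookkeeping of the tested quantities must be redone carefully; as written the claim that the principal-part contribution equals ``$\eta+i(\cdot)$'' conflates $\RT z_n\in[-\eta,R_0]$ with $\RT z_n=-\eta$. The idea can likely be salvaged---e.g.\ by first conjugating by the paper's $R$ so that the skew-adjoint reference operator has constant coefficients and Fourier eigenvectors---but in its present form the resolvent step is not a proof.
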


\begin{Remark}
	Due to periodicity of $V_{\mathrm{eff}}$ on $\mathbb{T}$, simple zeros of $V_{\mathrm{eff}}$ comes in pairs. By Theorems~\ref{thm:spectral_stability} and ~\ref{thm:nonlinear_stability} , one simple zero gives a solution branch consisting of asymptotically stable solutions for any sign of $\epsilon$. Moreover, at the bifurcation point $\epsilon = 0$ there is an exchange of stability, i.e., the zero eigenvalue crosses the imaginary axis with non zero speed. 
\label{rem:approx_stability}
\end{Remark}

\begin{Remark}
In \cite{DelHara_periodic,Hakkaev_Stefanov} the authors constructed spectrally stable solutions $u$ of \eqref{TWE} for $\epsilon=0$ in the case of anomalous dispersion $d>0$. These solutions satisfy the spectral condition $ \sigma(\widetilde{L}_{u,0}) \subset \{-2\mu\} \cup \{\RT z = -\mu\} \cup \{0\}$ and are therefore non-degenerate starting solutions for which our main results from Theorems~\ref{Fortsetzung_nichttrivial},  ~\ref{thm:spectral_stability}, and ~\ref{thm:nonlinear_stability} hold. 
\end{Remark}

\begin{Remark}
If $u$ is a solution of \eqref{TWE} then the relation 
$$
\int_{-\pi}^{\pi} (u'\bar{u}-\bar{u}'u) dx = 0
$$
holds. This constraint is satisfied by every even function $u$. In fact, the only solutions of equation \eqref{TWE} for $\epsilon = 0$ that we are aware of are even around $x=0$ (up to a shift).
\end{Remark}

\begin{Remark}
In the limit where $u_0$ is highly localized around $0$ (e.g.~the limit $d \to 0 \pm$) and the potential $V$ is wide, the effective potential $V_{\text{eff}}$ is well approximated by the actual potential $V$. More precisely we find the asymptotic
$$
V_{\text{eff}}(\sigma) = \RT \int_{-\pi}^\pi \iu V(x+\sigma) u_0' \bar{\phi}_0^* \, dx \approx V(\sigma) \RT \int_{-\pi}^\pi \iu  u_0' \bar{\phi}_0^* \, dx = V(\sigma)
$$
provided $\langle \iu u_0',\phi_0^*\rangle_{L^2}=1$. Thus, the asymptotically stable branch bifurcates from a simple zero $\sigma_0$ of $V$ with $V'(\sigma_0) \epsilon > 0$.
\label{rem:approx_potential}
\end{Remark}

\begin{Remark}
	 The criterion for stability of stationary solutions in Theorem \ref{thm:spectral_stability} can be written in a more precise form 
	 for small $\mu$ in the case of solitary waves. This limit 
	 is considered in Appendix \ref{appB}.
\end{Remark}

To summarize, our main results show that nondegenerate solutions of \eqref{TWE} for $\epsilon=0$ can be extended locally for small $\epsilon\not =0$ provided the effective potential $V_{\mathrm{eff}}$ has a sign-change. Depending on the derivative of $V_{\mathrm{eff}}$ at a simple zero we determined the stability properties of these solutions. It remains an open problem to give a criterion on $V$ or $V_{\mathrm{eff}}$ for the existence/stability of stationary solutions which applies when $|\epsilon|$ is large.

\section{Numerical simulations} \label{sec:numerical}

In the following we describe numerical simulations of solutions to \eqref{TWE}. We choose $f_0 = 2$, $\mu =1$, $V(x)=0.1+0.5\cos(x)$ and $d=\pm 0.1$. All computations are done with help of the Matlab package \texttt{pde2path} (cf. \cite{DOH14, UEC14}) which has been designed to numerically treat continuation and bifurcation in boundary value problems for systems of PDEs.

We begin with the description of the stationary solutions of the LLE \eqref{LLE_original}, which are the same as the solutions of \eqref{TWE} for $\epsilon=0$. The corresponding results are mainly taken from \cite{Gaertner_et_al, Mandel}. There is a curve of trivial, spatially constant solutions, cf. black line in Figure~\ref{fig:review}, and this is the same curve for anomalous dispersion ($d=0.1$) and normal dispersion ($d=-0.1$). Next one finds that there are finitely many bifurcation points on the curve of trivial solutions (blue dots). Depending on the sign of the dispersion parameter $d$ one can find now the branches of the single solitons on the periodic domain $\mathbb{T}$. In the following descriptions we always follow the path of trivial solutions by starting from negative values of $\zeta$. 

\begin{figure*}[h]
\centering
\begin{minipage}[t]{0.32\textwidth}
\begin{tikzpicture}[overlay]
\node at (2.655,2.07)
{\includegraphics[width=\columnwidth]{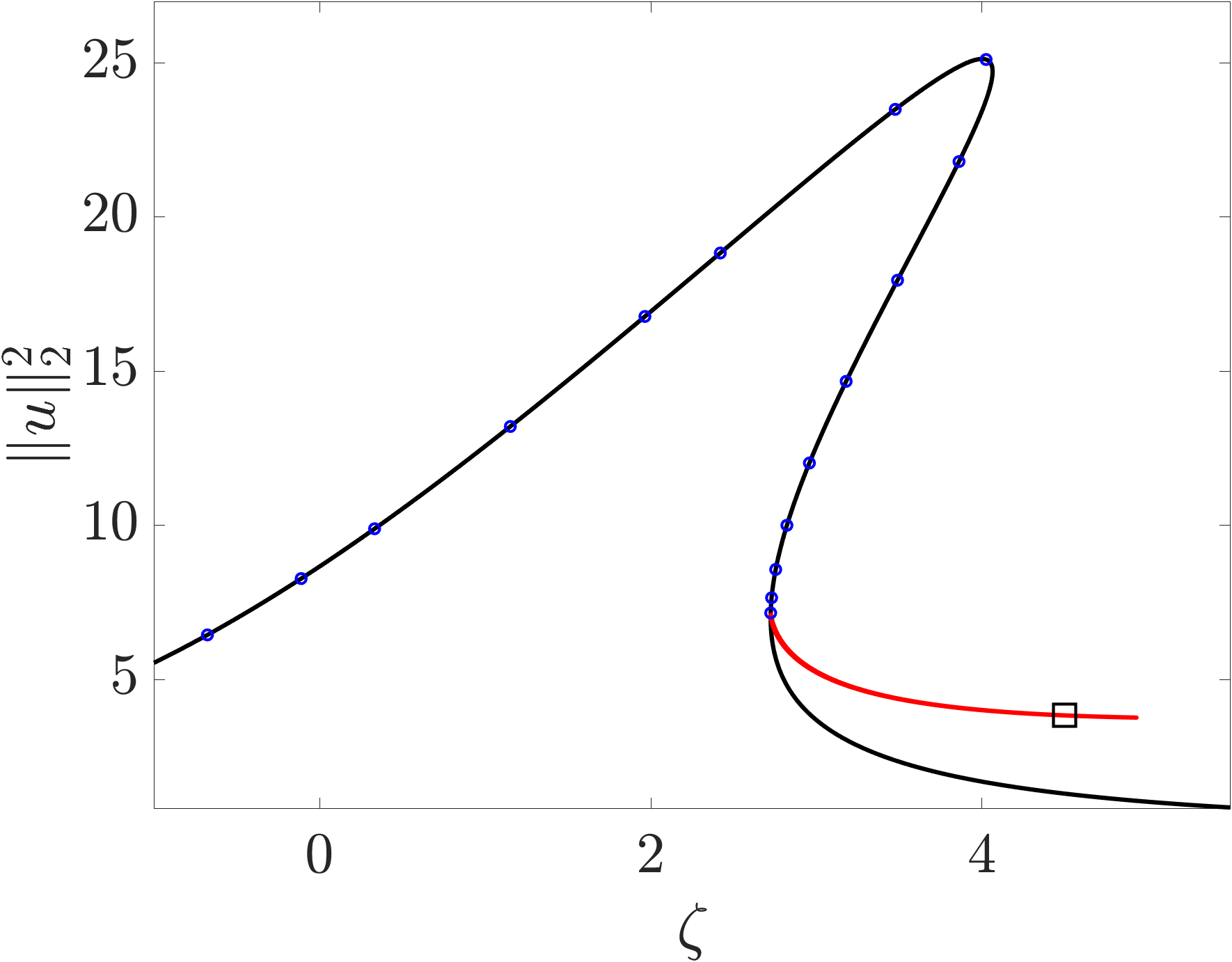}};
\node  at (4.55,2.55) {\includegraphics[width=0.3\columnwidth]{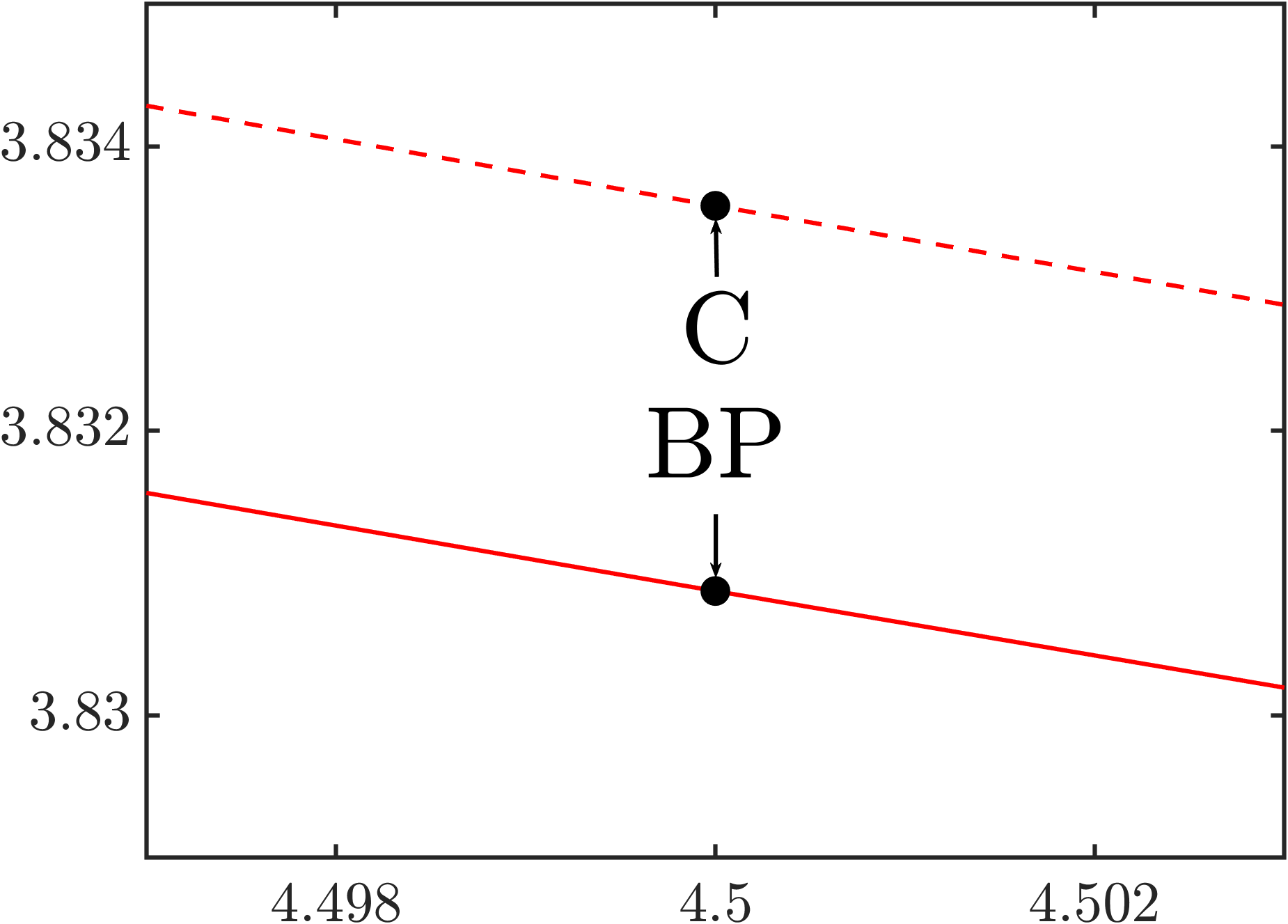}};
\draw [draw=black] (4.54,1.11) -- (3.94,2.06);
\draw [draw=black] (4.634,1.11) -- (5.34,2.06);
\end{tikzpicture}
\end{minipage}\hspace*{0.5cm}
\begin{minipage}[t]{0.32\textwidth}
\includegraphics[width=\columnwidth]{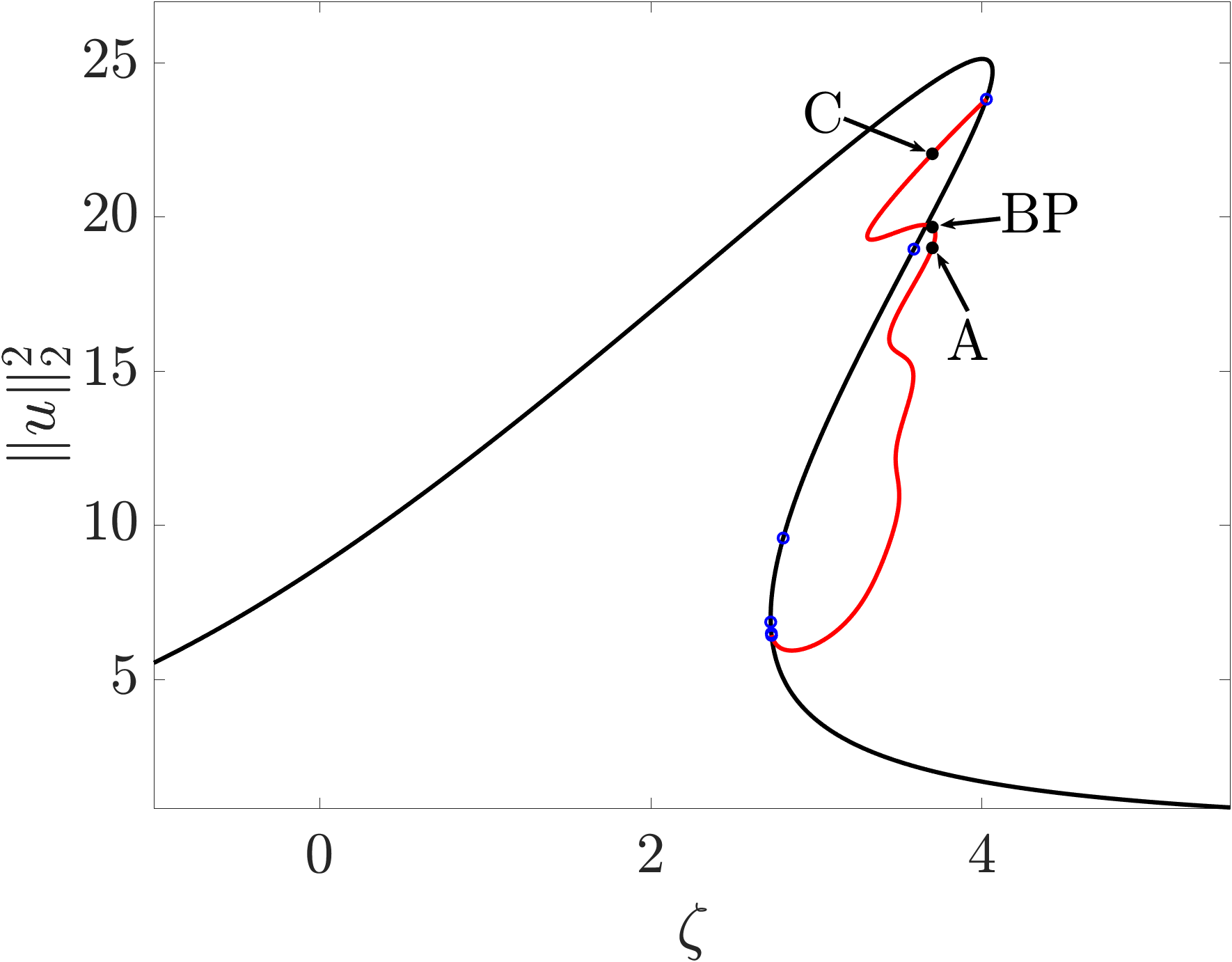}
\end{minipage}
\caption{Bifurcation diagram for the case $\epsilon = 0$. Blue dots indicate bifurcation points on the line of trivial solutions (black). The red curve denotes the single soliton solution branch. The point BP is chosen as a starting point for Theorem~\ref{Fortsetzung_nichttrivial}. Further solutions on the same branch for the same value of $\zeta$ are denoted by C (left panel) and A, C (right panel). Left panel for $d=0.1$, right panel for $d=-0.1$.}
\label{fig:review}
\end{figure*}

For $d=0.1$ (left panel in Figure~\ref{fig:review}) along the trivial branch there is a last bifurcation point which gives rise to a single bright soliton branch (red line). This branch has a turning point, at which the solutions change from unstable (dashed) to stable (solid), and after the turning point it tends back towards the trivial branch. Thus, the red line in the left panel of Figure~\ref{fig:review} represents two different but almost identical curves, which can be seen in the enlarged inset. We have chosen a solution at the point $BP$ on the stable branch as a starting point for the illustration of Theorems~\ref{Fortsetzung_nichttrivial} and \ref{thm:spectral_stability}.

In the case where $d=-0.1$ (right panel in Figure~\ref{fig:review}) along the trivial branch there is a first bifurcation point from which a single dark soliton branch (red line) bifurcates. Near the second turning point of this branch the most localized single solitons live and we have chosen a stable dark soliton solution at the point $BP$ as a starting point for the illustration of  Theorems~\ref{Fortsetzung_nichttrivial} and \ref{thm:spectral_stability}.

Next we explain the global picture in Figure~\ref{fig:Global_Bif_diag} of the continuation in $\epsilon$ of the chosen point BPs from the $\epsilon=0$ case in Figure~\ref{fig:review}. The local picture is covered by Theorem~\ref{Fortsetzung_nichttrivial}. First we note the following symmetry: since $V(x)$ is even around $x=0$ we find that $(u(x),\epsilon)$ solves \eqref{TWE} if and only if $(u(-x),-\epsilon)$ satisfies \eqref{TWE}. Since reflecting $u$ does not affect the $L^2$-norm we see for $\epsilon>0$ an  exact mirror image of the one for $\epsilon<0$. 


\begin{figure*}[ht]
\centering
\begin{minipage}[t]{0.32\textwidth}
\begin{tikzpicture}[overlay]
\node at (2.655,2.07)
{\includegraphics[width=\columnwidth]{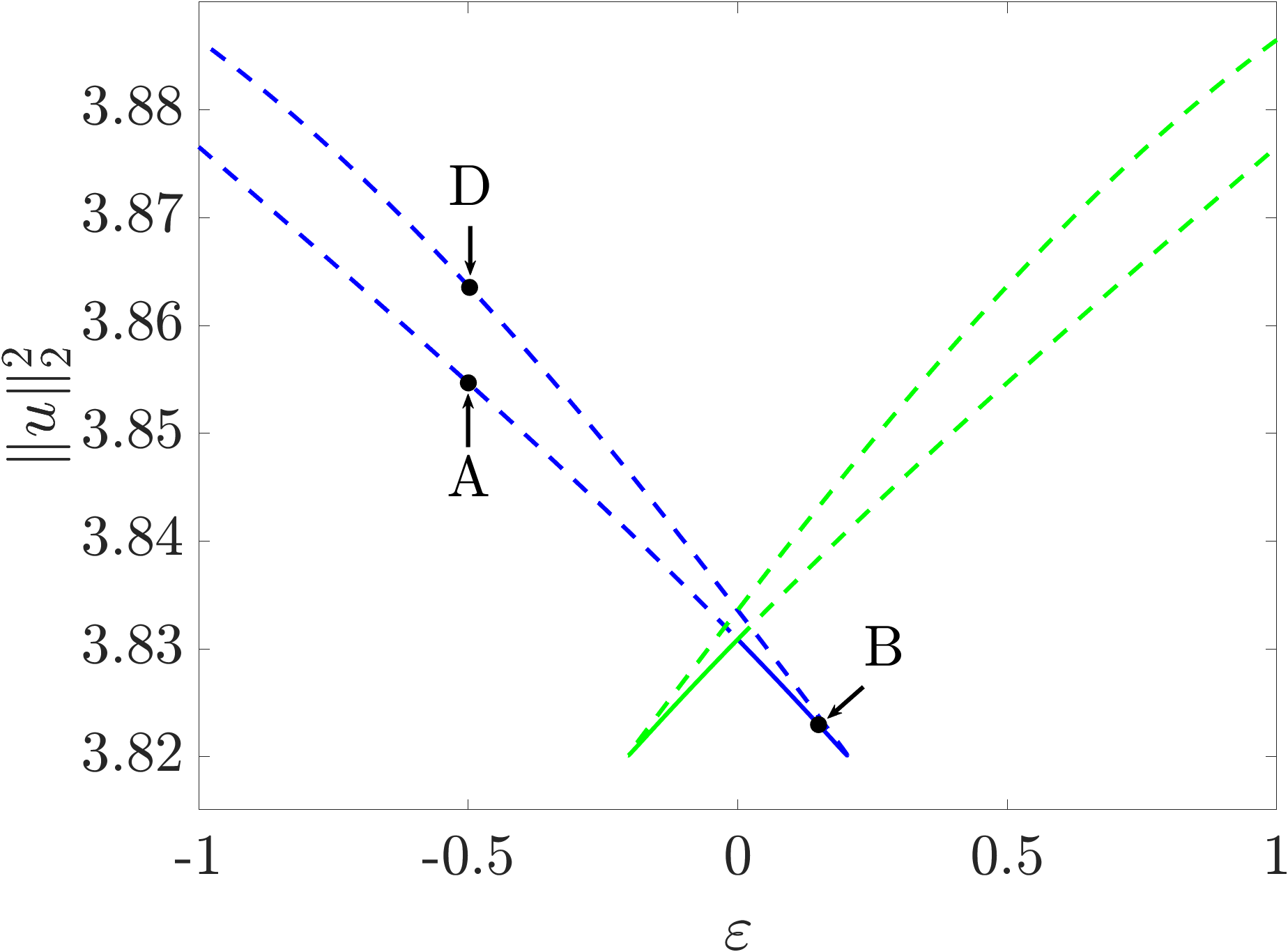}};
\node  at (3.045,3.5) {\includegraphics[width=0.3\columnwidth]{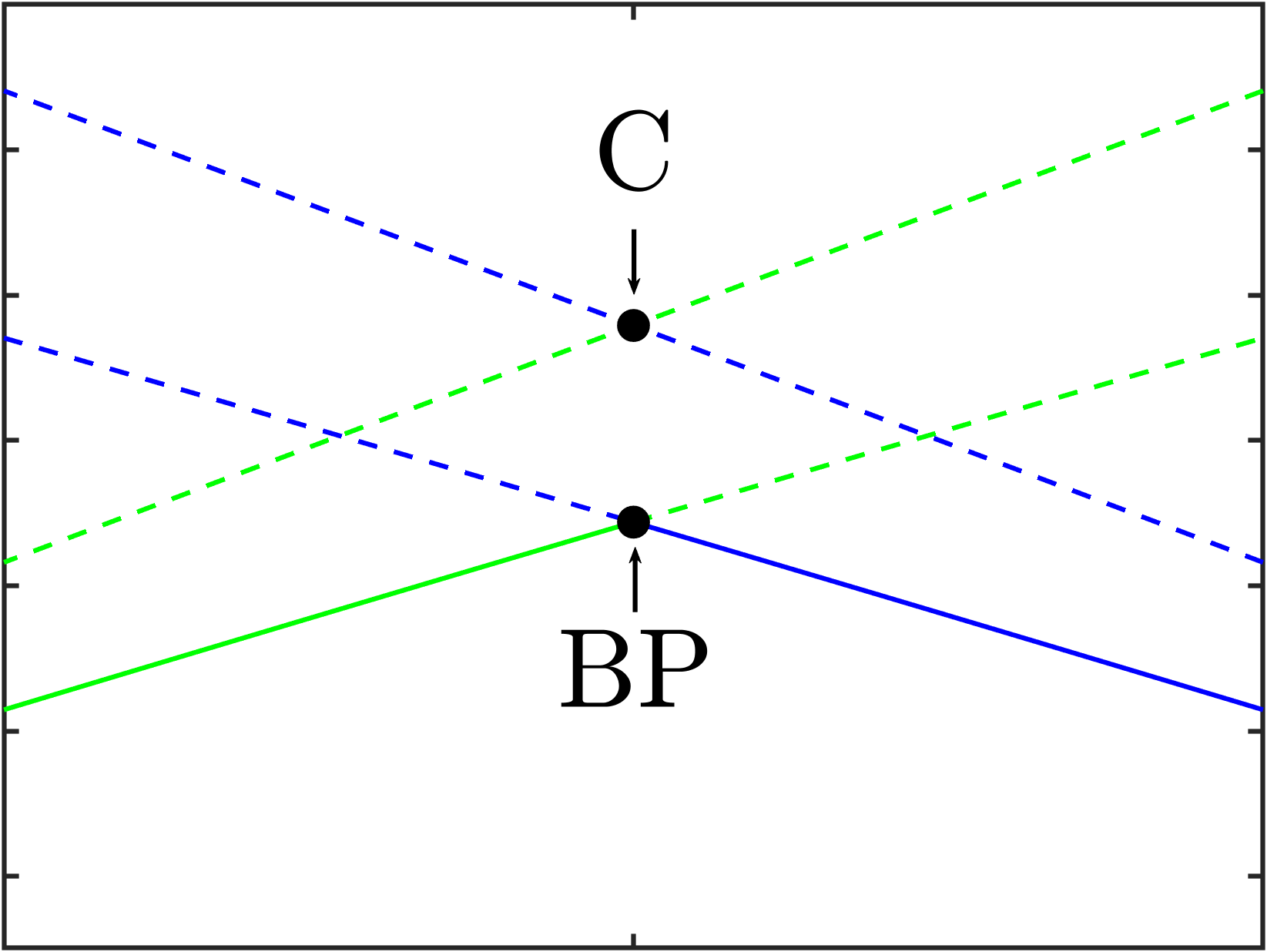}};
\draw [draw=black] (2.95,1.26) rectangle (3.15,1.7);
\draw [draw=black] (2.95,1.7) -- (2.25,2.9);
\draw [draw=black] (3.15,1.7) -- (3.83,2.9);
\end{tikzpicture}
\end{minipage}\hspace*{0.15cm}
\begin{minipage}[t]{0.32\textwidth}
\includegraphics[width=\columnwidth]{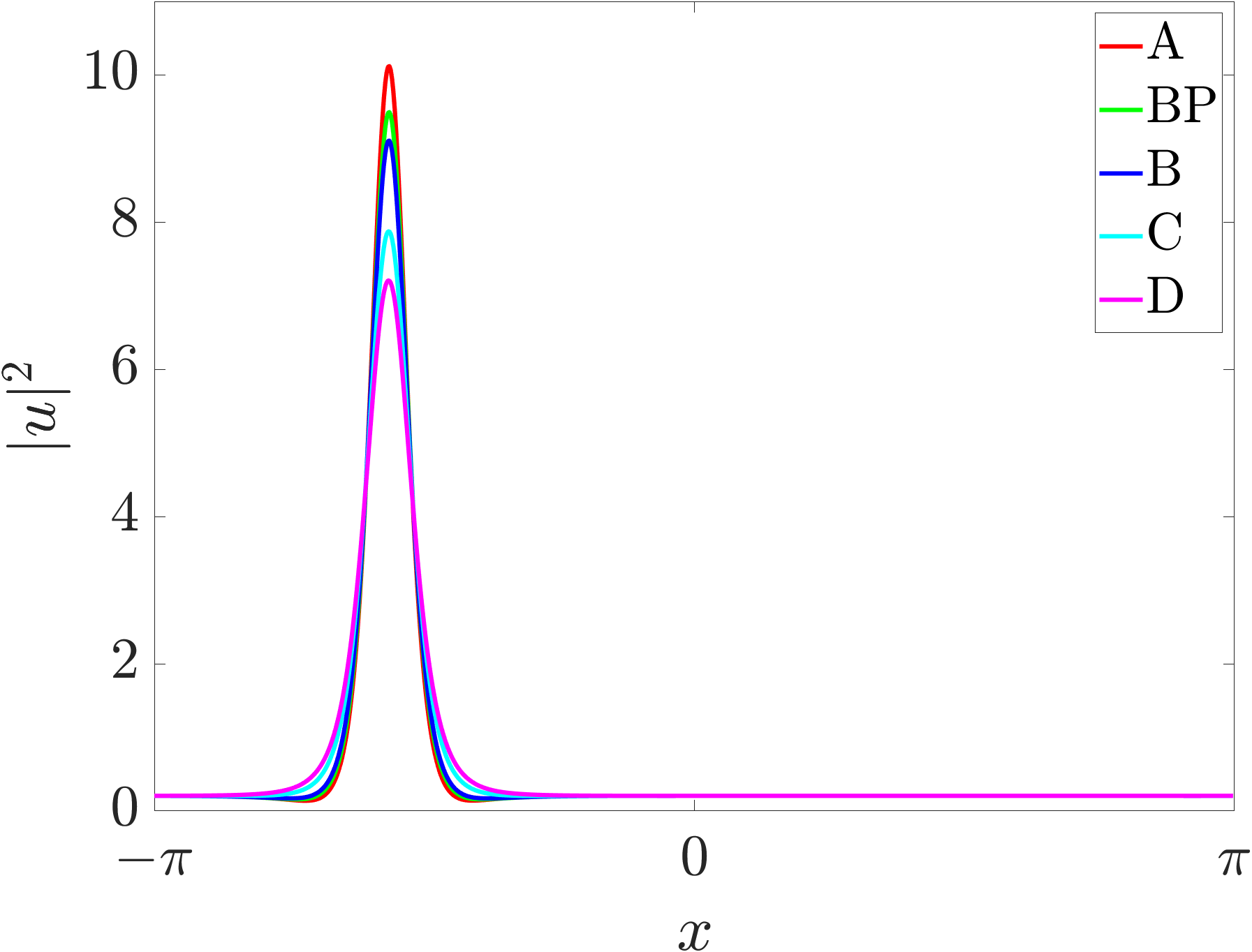}
\end{minipage} \\
\begin{minipage}[h]{0.3\textwidth}
\includegraphics[width=\columnwidth]{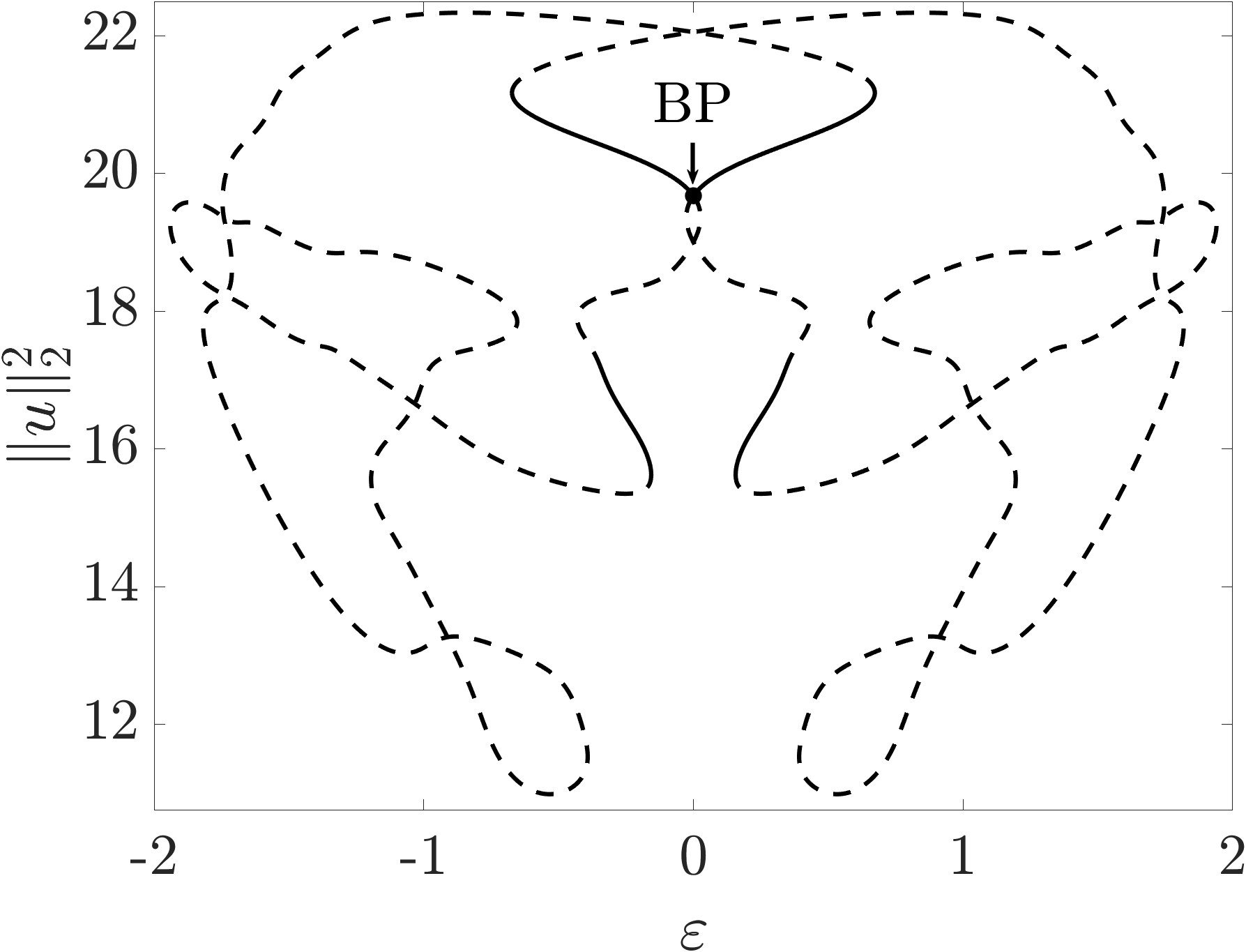} 
\end{minipage}
\hspace*{0.15cm}
\begin{minipage}[h]{0.3\textwidth}
\includegraphics[width=\columnwidth]{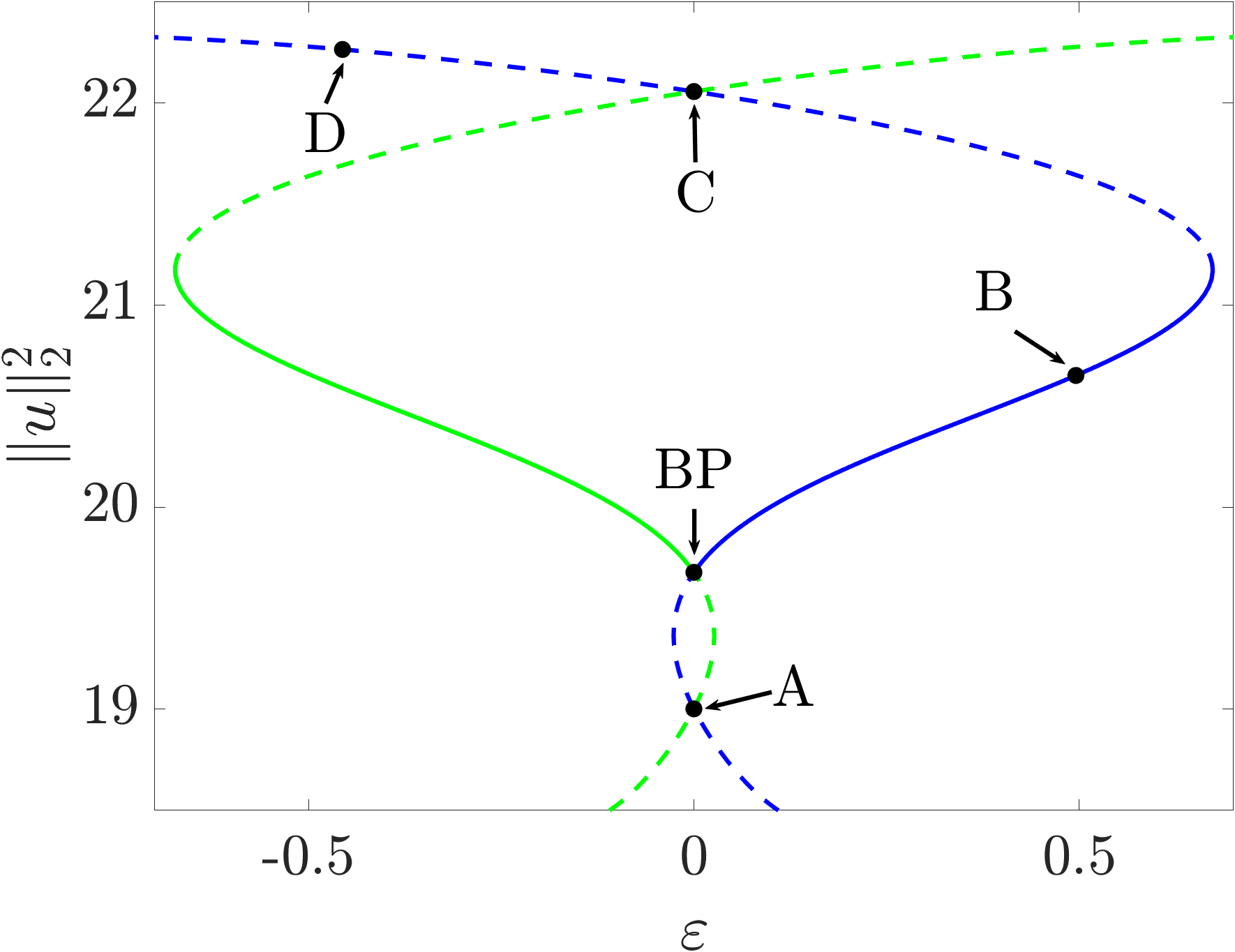} 
\end{minipage}
\hspace*{0.15cm}
\begin{minipage}[h]{0.3\textwidth}
\includegraphics[width=\columnwidth]{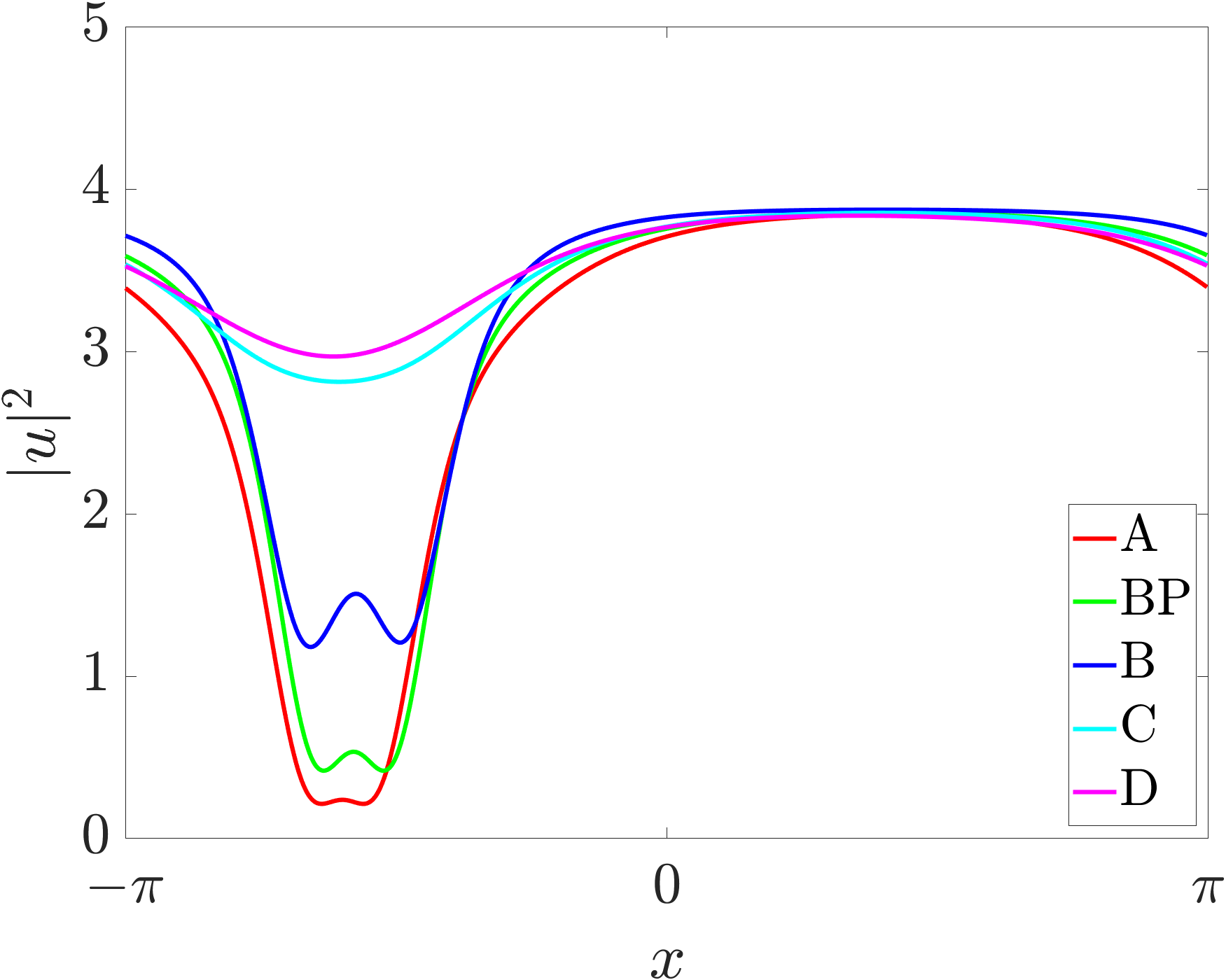} 
\end{minipage}
\caption{Continuation diagrams w.r.t $\epsilon$ with stability regions (solid = stable; dashed = unstable) and solutions at designated points. The two different zeroes of $V_\text{eff}$ give rise to two different continuation curves (blue and green). Top panels: $d=0.1$, $\zeta=3.7$. Bottom panels: $d=-0.1$, $\zeta=4.5$ with zoom (middle panel) of the continuation curve near the starting point. }
\label{fig:Global_Bif_diag}
\end{figure*}

Next we observe that continuation curves in $\epsilon$ appear to be unbounded for $d=0.1$ (upper left panel of Figure~\ref{fig:Global_Bif_diag}) and closed and bounded for $d=-0.1$ (lower left panel of Figure~\ref{fig:Global_Bif_diag}). In our example the map $\sigma\mapsto V_\text{eff}(\sigma):= \RT \int_{-\pi}^{\pi} \iu V(x+\sigma) u_0'\bar\phi_0^*\,dx$ has two zeroes in the periodic domain $\mathbb{T}$ denoted by $\sigma_0$ and $\sigma_1$. Since moreover $u_0$ is even and consequently $u_0'$, $\phi_0^*$ are odd we see that the effective potential $V_\text{eff}$ is also even and hence $\sigma_0=-\sigma_1$. Thus, continuation in $\epsilon$ works for the starting point $u_0(\cdot-\sigma_0)$ (blue curve) and $u_0(\cdot+\sigma_0)$ (green curve) with $\sigma_0 <  0$. As predicted from Theorem~\ref{thm:spectral_stability} locally on one side of $\epsilon=0$ we have stable and on the other side unstable solutions. On the top and bottom right panels of Figure~\ref{fig:Global_Bif_diag} we see the graph of $|u|^2$ for several solutions on the continuation diagram. The top left panel and the bottem left panel indicate that the $\epsilon$-continuation curves meet all other nontrivial points (C for $d=0.1$ and A, C for $d=-0.1$) at $\epsilon=0$ from Figure~\ref{fig:review}.


In Figure~\ref{fig:Effective_potential} we show the starting solutions $u_0(x-\sigma_0)$ and $u_0(x-\sigma_1)$ together with the potential $V(x)$. Here the zeroes $\sigma_0<0<\sigma_1$ of the effective potential $V_\text{eff}$ are shown as blue and green dots and we already observed $\sigma_0=-\sigma_1$ due to the evenness of both $V$ and $V_\text{eff}$. Since $u_0$ is sufficiently strongly localized the zeroes of $V_\text{eff}$ are well approximated by the zeroes of $V$ and the starting solutions are thus centered near the zeroes of $V$. Therefore, by applying Remark~\ref{rem:approx_potential}, we see that slope of $V$ at the center of the soliton being positive in the blue bifurcation point indicates that the $\epsilon$-continuation will be stable for $\epsilon>0$ and unstable for $\epsilon<0$. The stability behavior is exactly opposite for the green bifurcation point. The stability considerations are valid both for $d=0.1$ and $d=-0.1$.

\begin{figure*}
\centering
\begin{minipage}[h]{0.32\textwidth}
\includegraphics[width=\columnwidth]{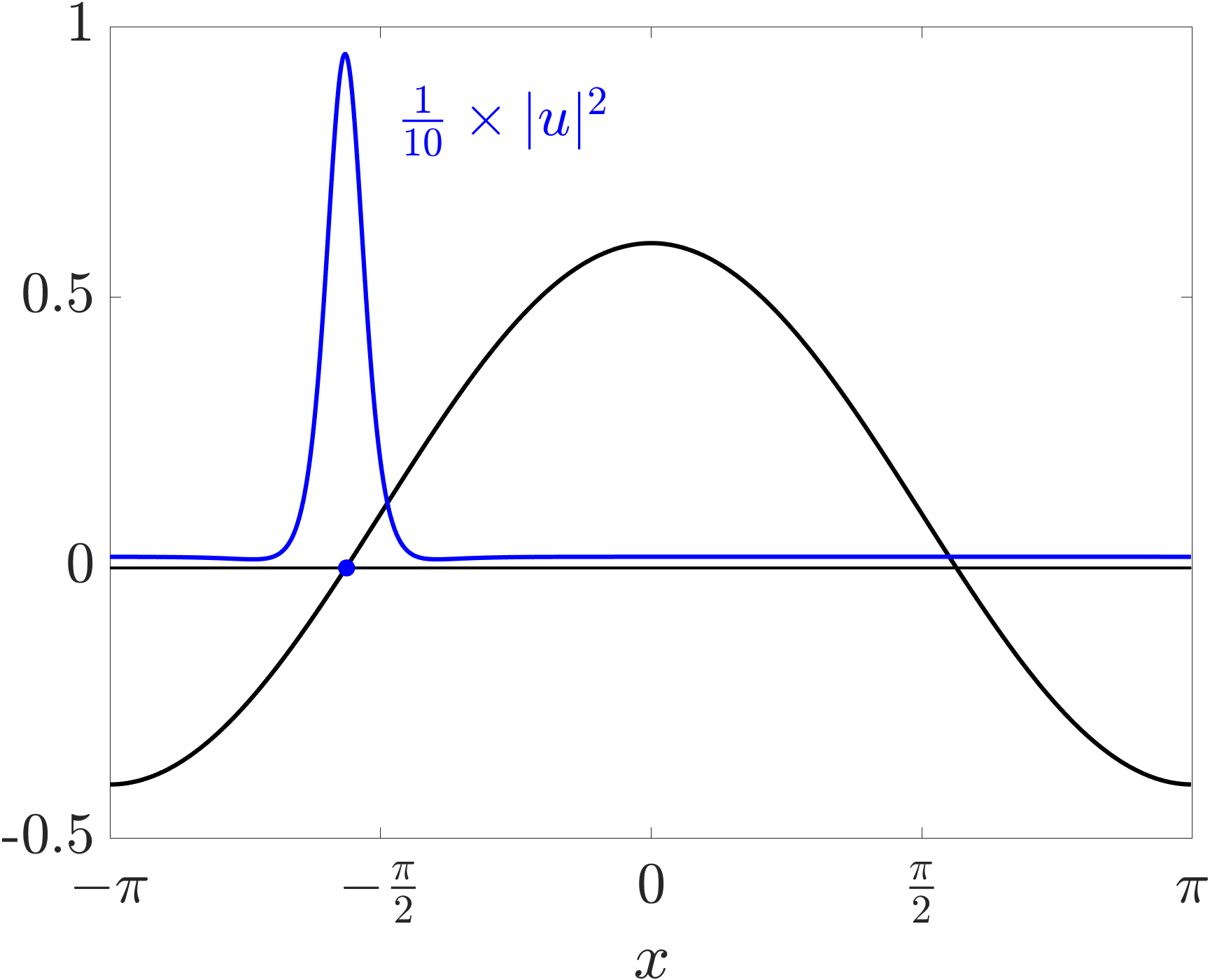} 
\end{minipage}
\hspace*{0.15cm}
\begin{minipage}[h]{0.32\textwidth}
\includegraphics[width=\columnwidth]{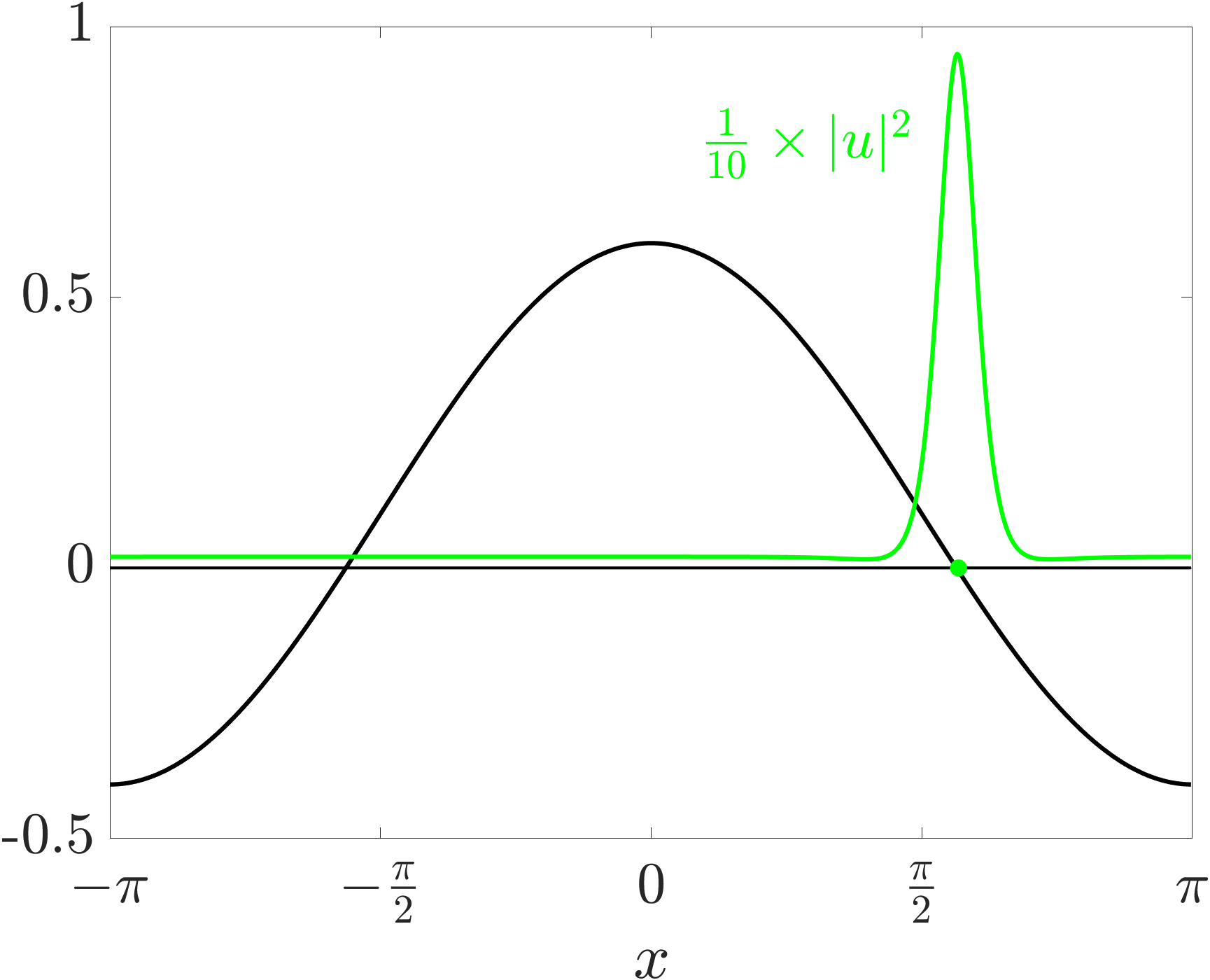} 
\end{minipage}\\
\begin{minipage}[h]{0.32\textwidth}
\includegraphics[width=\columnwidth]{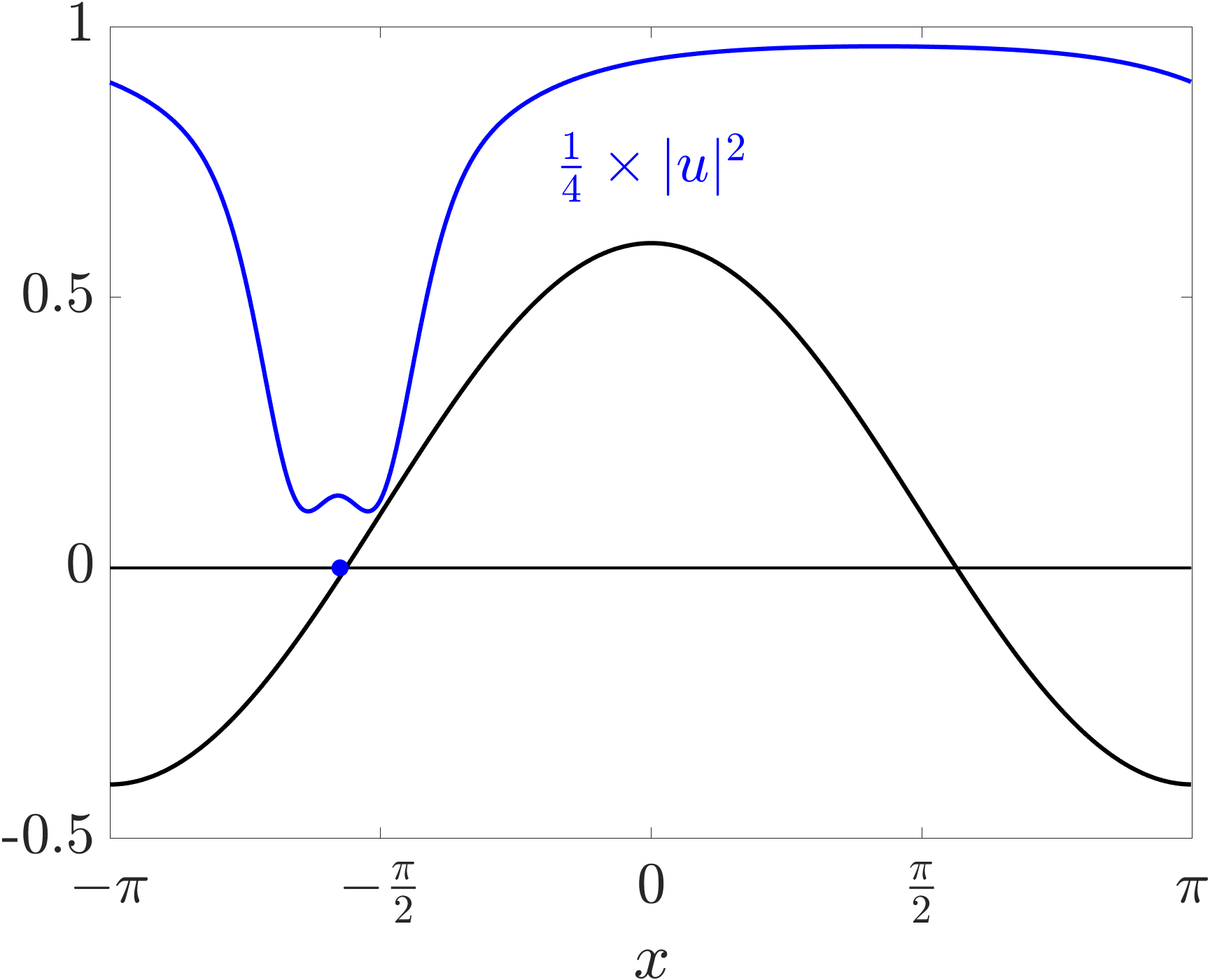} 
\end{minipage}
\hspace*{0.15cm}
\begin{minipage}[h]{0.32\textwidth}
\includegraphics[width=\columnwidth]{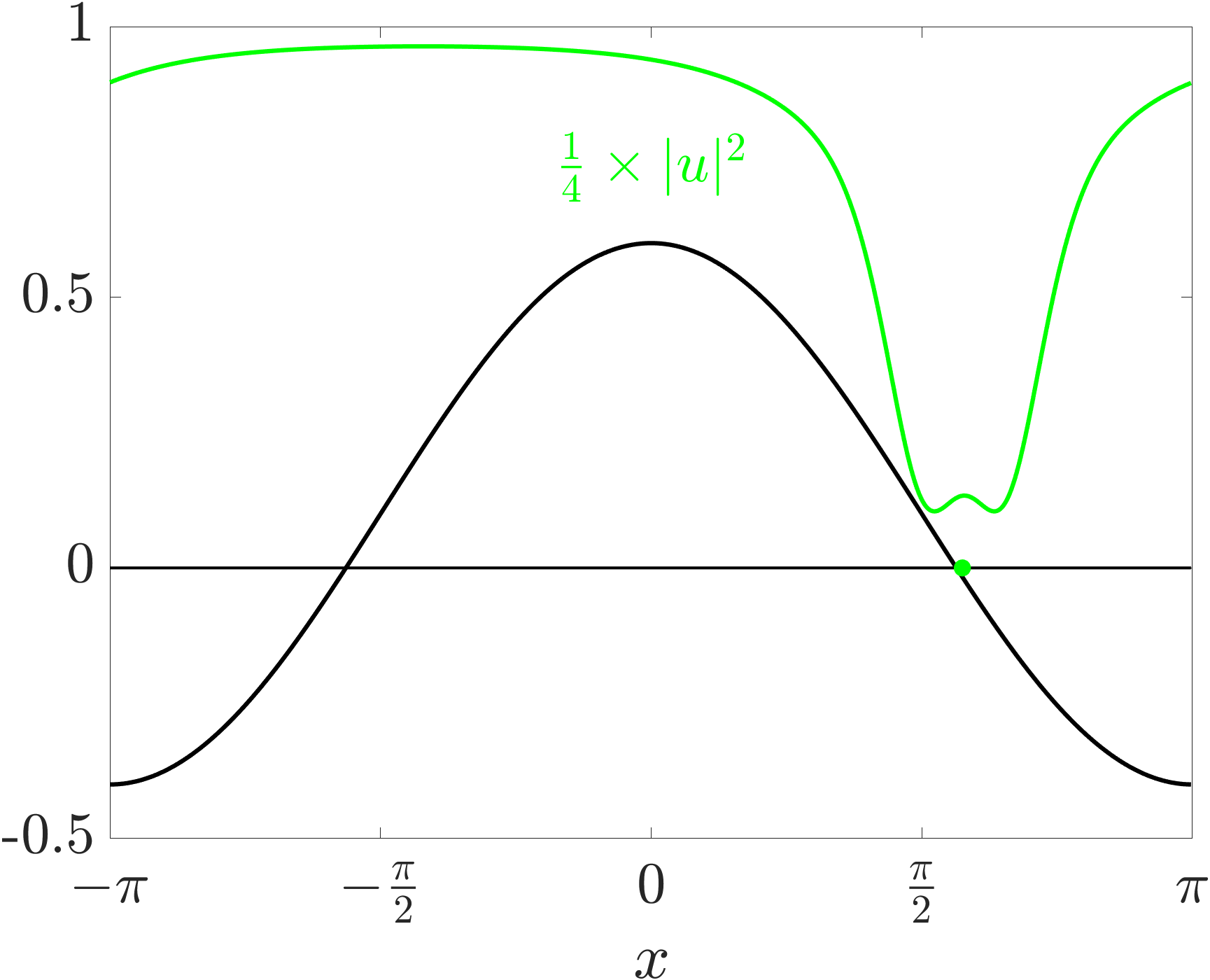} 
\end{minipage}
\caption{Top row: $d=0.1$, bottom row: $d=-0.1$. Left panels: starting solutions $u_0(x-\sigma_0)$ together with $V(x)$ and negative zero $\sigma_0$ of $V_\text{eff}$ (blue dot). Stability for $\epsilon>0$, instability for $\epsilon<0$. Right panels: starting solutions $u_0(x+\sigma_0)$ together with $V(x)$ and positive zero $\sigma_1=-\sigma_0$ of $V_\text{eff}$ (green dot). Stability for $\epsilon<0$, instability for $\epsilon>0$.}
\label{fig:Effective_potential}
\end{figure*}



Finally, let us illustrate the spectral stability properties of the $\epsilon$-continuations in Figure~\ref{fig:Spectra}. For $\epsilon=0$ we see in the left panel the spectrum of the linearization around $u_0$ with most of spectrum having real part $-1$ due to damping $\mu=1$ and further spectrum in the left half plane together with the zero eigenvalue caused by shift-invariance. Now we consider how the critical eigenvalue behaves when $\epsilon$ varies. We do this for the case where the starting soliton sits at a zero of $V_\text{eff}$ with positive slope, cf. blue bifurcation point in Figure~\ref{fig:Effective_potential}. As predicted, the critical eigenvalue moves into the complex left half plane for $\epsilon>0$ rendering the $\epsilon$-continuations stable. Since the starting solitons are sufficiently localized $-V'(\sigma_0)$ predicts well the slope of the critical eigenvalue, cf. Lemma~\ref{lem:eig_0} and Remark~\ref{rem:approx_potential}.

\begin{figure*}
\centering
\begin{minipage}[h]{0.32\textwidth}
\includegraphics[width=\columnwidth]{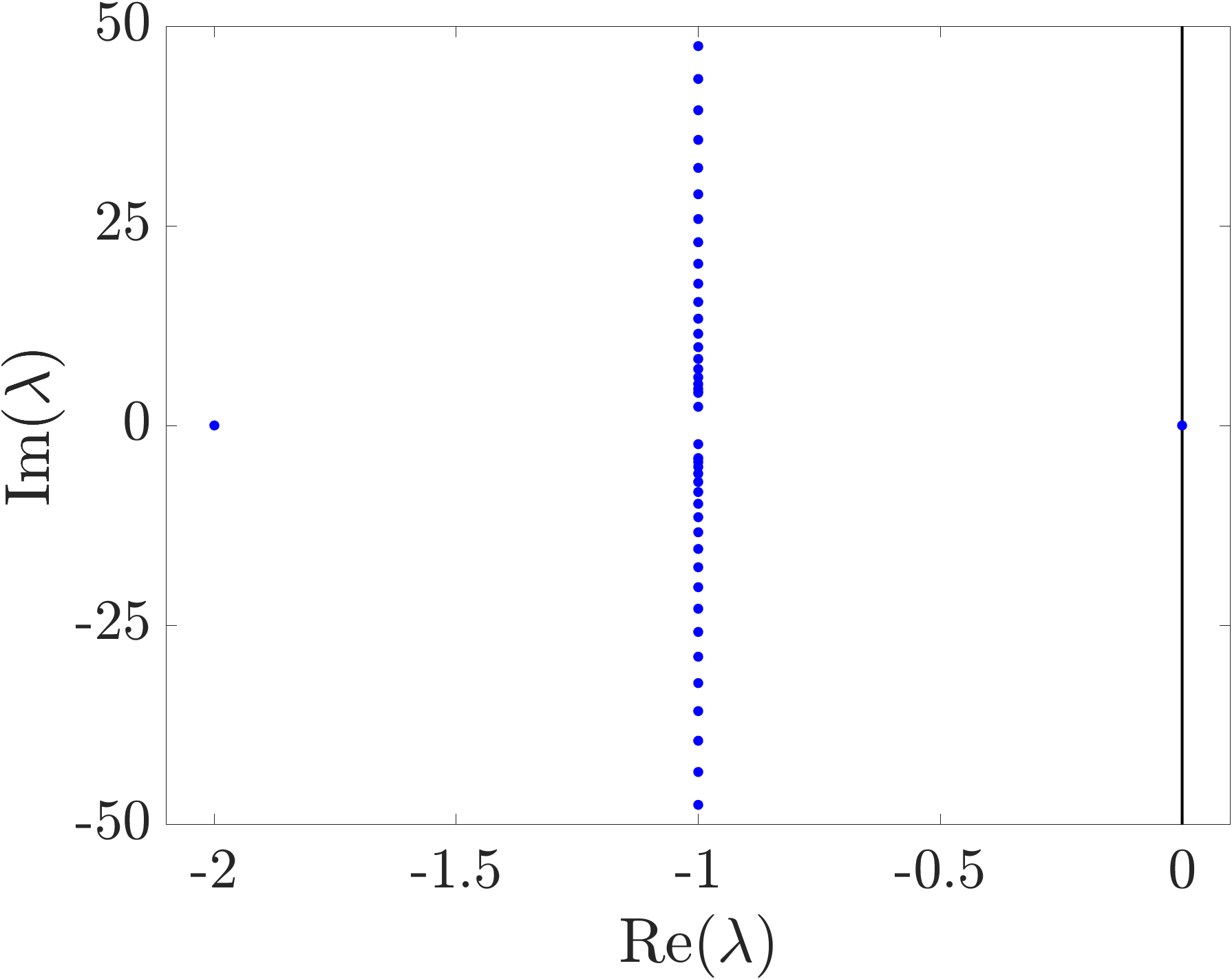} 
\end{minipage}
\hspace*{0.15cm}
\begin{minipage}[h]{0.32\textwidth}
\includegraphics[width=\columnwidth]{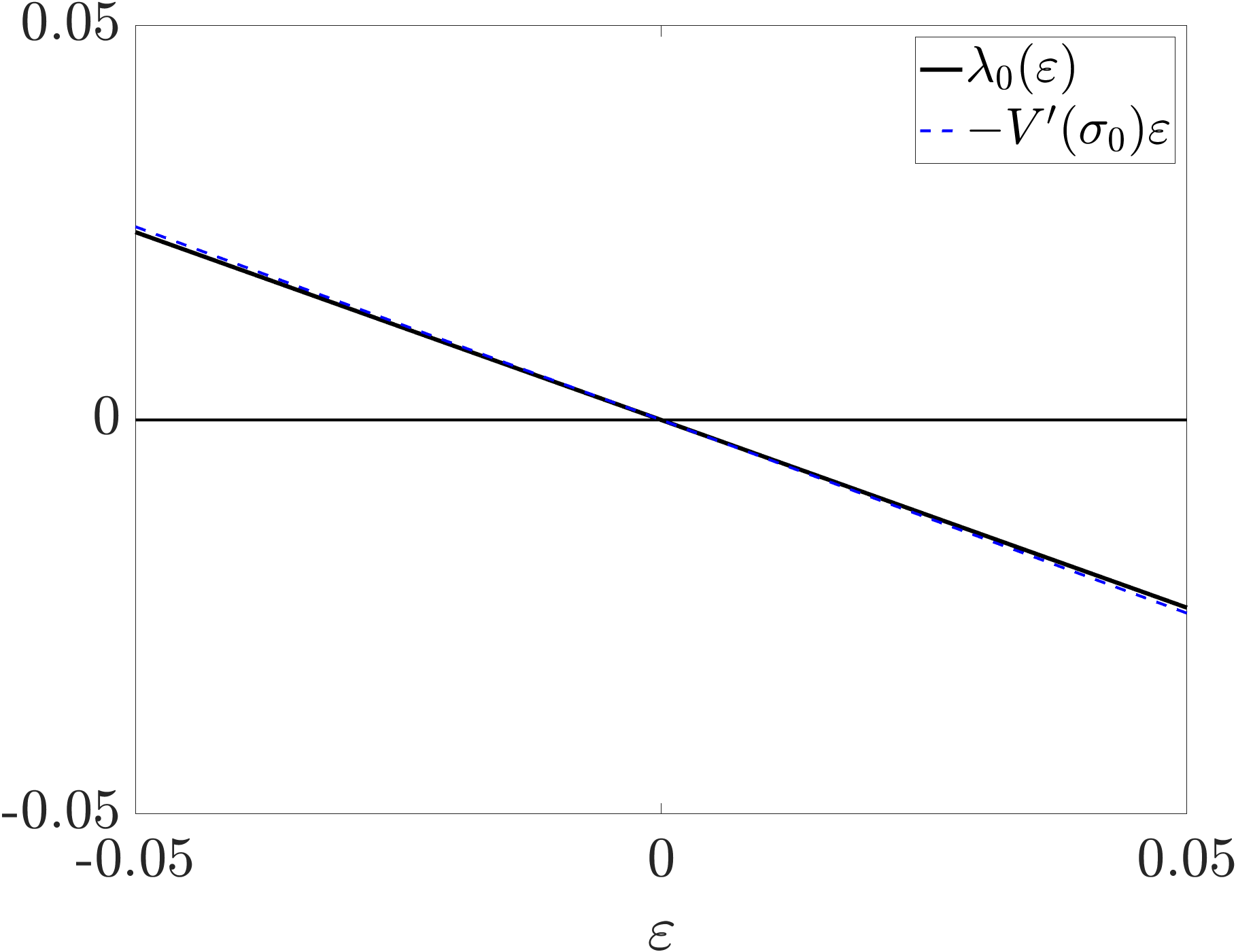} 
\end{minipage}\\
\begin{minipage}[h]{0.32\textwidth}
\includegraphics[width=\columnwidth]{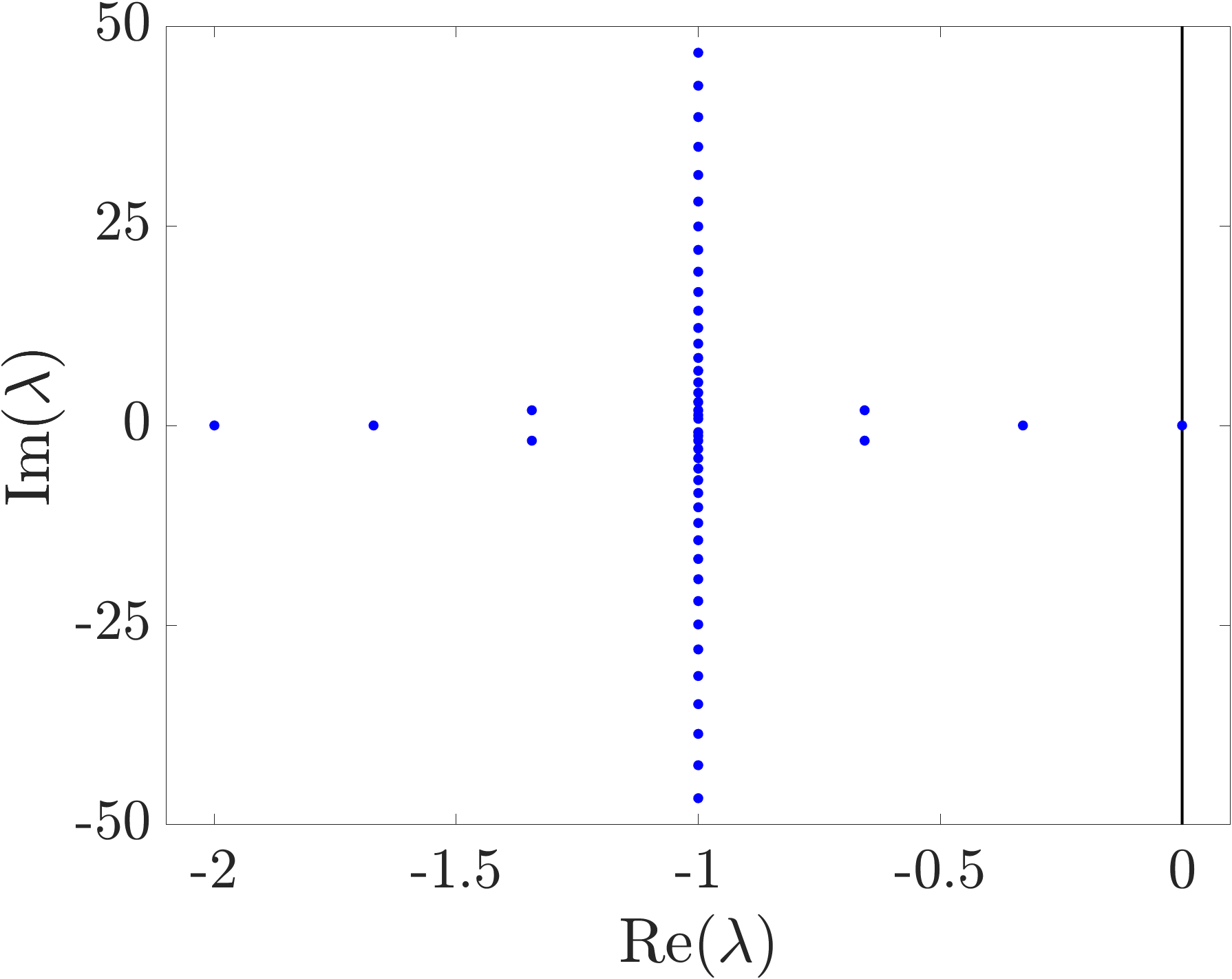} 
\end{minipage}
\hspace*{0.15cm}
\begin{minipage}[h]{0.32\textwidth}
\includegraphics[width=\columnwidth]{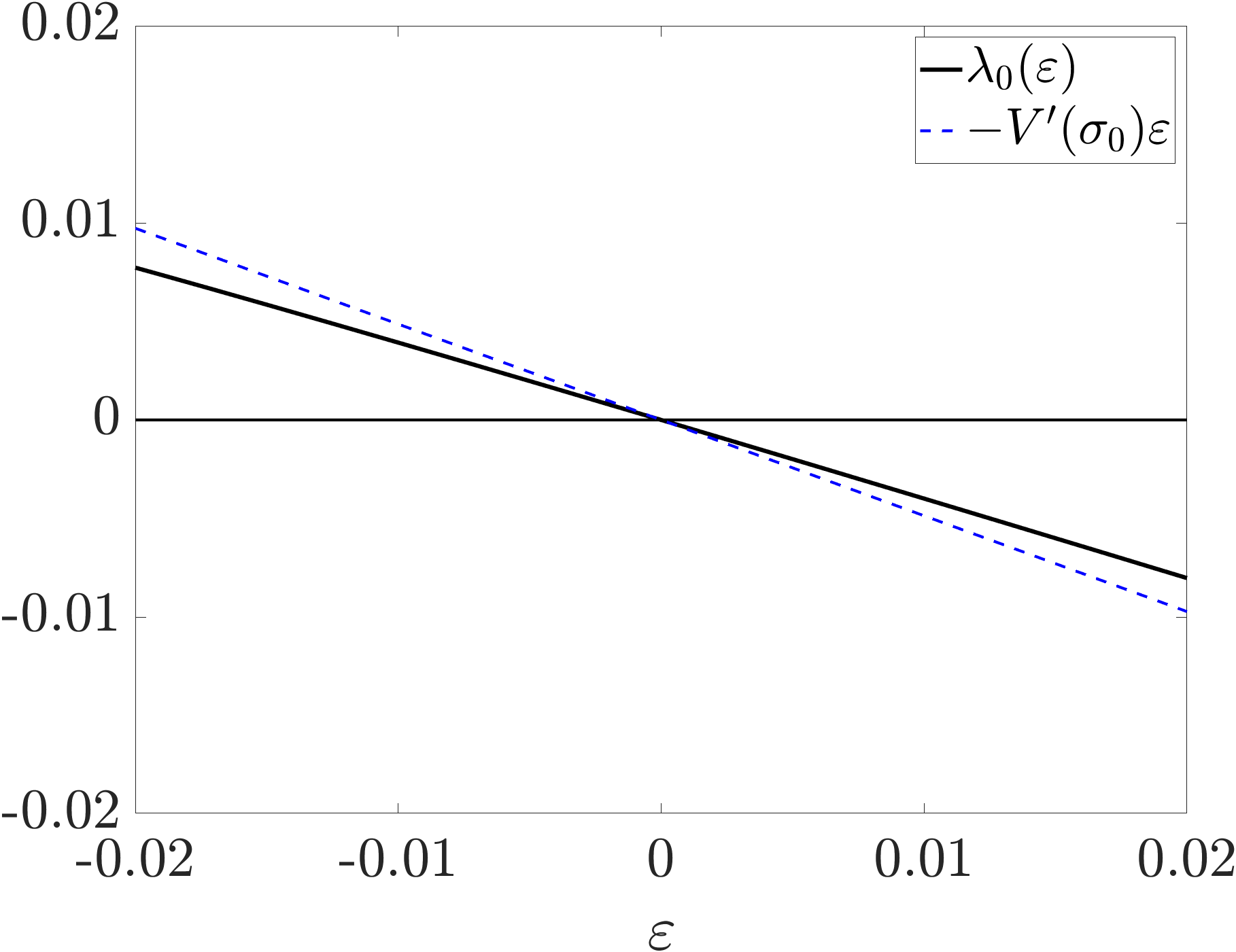} 
\end{minipage}
\caption{Top: $d=0.1$, bottom: $d=-0.1$. Left: spectrum for $\epsilon=0$. Right: critical eigenvalue $\lambda_0(\epsilon)$ together with $-V'(\sigma_0)\epsilon$ as functions of $\epsilon$.}
\label{fig:Spectra}
\end{figure*}

\section{Proof of the existence result}

Theorem~\ref{Fortsetzung_nichttrivial} will be proved via Lyapunov-Schmidt reduction and the Implicit Function Theorem. Fix the values of $d, \zeta,\mu$ and $f_0$. Let $u_0\in H^2_\text{per}([-\pi,\pi],\C)$ be a non-degenerate solution of \eqref{TWE} for $\epsilon=0$ and recall that for $\sigma\in\R$ its shifted copy $u_\sigma(x):=u_0(x-\sigma)$ is also a solution of \eqref{TWE} for $\epsilon=0$. 

\begin{proof}[Proof of Theorem \ref{Fortsetzung_nichttrivial}:]
We seek solutions $u$ of \eqref{TWE} of the form
$$
u = u_\sigma + v,\quad \langle v,u_{\sigma}' \rangle_{L^2} = 0, \quad v \in H_\per^2([-\pi,\pi],\C).
$$
Inserting it into \eqref{TWE} we obtain the following equation for the correction term $v$:
\begin{align}\label{eq:ansatz}
L_{u_\sigma} v + \iu \epsilon V(u_\sigma' + v') - N(v,\sigma) = 0
\end{align}
with nonlinearity given by
\begin{align*}
N(v,\sigma) = \bar{u}_\sigma  v^2 + 2 u_\sigma |v|^2 + |v|^2 v.
\end{align*}
The nonlinearity is a sum of quadratic and cubic terms in $v$. Since $H^2_{\rm per}$ is a Banach algebra, it is clear that 
	for every $R > 0$, there exists $C_R > 0$ such that 
\begin{equation}
\label{bound-nonlinear}
\|N(v,\sigma)\|_{L^2} \leq C_R \|v\|_{H^2}^2, \quad \mbox{\rm for every } \; 
v \in H^2_{\rm per} : \;\; \| v \|_{H^2} \leq R.
\end{equation}
Moreover, since $V \in L^\infty$ it follows that
$$
\|\iu \epsilon V(u_\sigma'+ v')\|_{L^2} \leq  |\epsilon| \|V\|_{L^\infty} \|u_\sigma + v\|_{H^2}.
$$
Next we solve \eqref{eq:ansatz} according to the Lyapunov-Schmidt reduction method. Define the orthogonal projections
$$
P_\sigma : L^2\to \spann\{u_{\sigma}'\} \subset L^2, \quad Q_\sigma: L^2\to \spann\{\phi_{\sigma}^*\}^\perp \subset L^2
$$
onto $\ker L_{u_\sigma}$ and $(\ker L_{u_\sigma}^*)^\perp = \spann\{\phi_{\sigma}^*\}^\perp = \range L_{u_\sigma}$, respectively. Then \eqref{eq:ansatz} can be decomposed into a non-singular and singular equation
\begin{align}
Q_\sigma \left(L_{u_\sigma} (I - P_\sigma) v + \iu \epsilon V(u_{\sigma}'+v') - N(v,\sigma)\right) &= 0, \label{eq:LS1} \\
\langle \iu \epsilon V u_\sigma', \phi_{\sigma}^* \rangle_{L^2} + \langle \iu \epsilon V v' - N(v,\sigma), \phi_{\sigma}^* \rangle_{L^2} &= 0. \label{eq:LS2}
\end{align}
Notice that the linear part $Q_\sigma L_{u_\sigma} (I - P_\sigma)$ in \eqref{eq:LS1} is invertible between the $\sigma$-dependent subspaces $(\ker L_{u_\sigma})^\perp$ and $\range L_{u_\sigma}$. Therefore, the Implicit Function Theorem cannot be applied directly to solve \eqref{eq:LS1}. However, \eqref{eq:LS1} is equivalent to $F(v,\epsilon,\sigma)=0$ with
$$
F(v,\epsilon,\sigma):= Q_\sigma \left(L_{u_\sigma} (I - P_\sigma) v + \iu \epsilon V(u_{\sigma}'+v') - N(v,\sigma)\right) + \phi_\sigma^* \langle v , u_\sigma' \rangle_{L^2}
$$
and $F:H_\per^2([-\pi,\pi],\C) \times \R\times \R \to L^2([-\pi,\pi],\C)$. Here the added term $\phi_\sigma^* \langle v , u_\sigma' \rangle_{L^2}$ enforces $v\perp u_\sigma'$. For any fixed $\sigma_0 \in \R$ we have $F(0,0,\sigma_0) = 0$. Since
$$
D_v F(0,0,\sigma_0) \varphi =  L_{\sigma_0} \varphi + \phi_{\sigma_0}^* \langle \varphi , u_{\sigma_0}' \rangle_{L^2}
$$
is an isomorphism from $H_\per^2$ to $L^2$, we can apply the Implicit Function Theorem to the function $F$ which gives the existence of a smooth function $v = v(\epsilon,\sigma)$ solving the problem $F(v(\epsilon,\sigma),\epsilon,\sigma) = 0$ for $(\epsilon,\sigma)$ in a neighborhood of $(0,\sigma_0)$. Then, by construction, $v$ is a solution of \eqref{eq:LS1} and satisfies the orthogonality condition
$$
\langle v(\epsilon,\sigma) , u_\sigma' \rangle_{L^2} = 0
$$
as required at the beginning of the proof. Moreover from \eqref{eq:LS1} we see that $F(0,0,\sigma)=0$ so that $v(0,\sigma)=0$ which implies the bound
\begin{align}\label{eq:bounds_epsilon}
\|v(\epsilon,\sigma)\|_{H^2} \leq C |\epsilon|.
\end{align}
As a consequence, $\|v'(\epsilon,\sigma)\|_{L^2} \leq  C |\epsilon|$, where $v'(\epsilon,\sigma)$ denotes the derivative of $v$ with respect to $x$. Inserting $v(\epsilon,\sigma)$ into the singular equation \eqref{eq:LS2} we end up with with the 2-dimensional problem
$$
f(\epsilon,\sigma) :=  \langle \iu \epsilon V u_\sigma', \phi_{\sigma}^* \rangle_{L^2} + \langle \iu \epsilon V v'(\epsilon,\sigma) - N(v(\epsilon,\sigma),\sigma), \phi_{\sigma}^* \rangle_{L^2} = 0.
$$
For all $\sigma \in \R$ we have the asymptotic
$$
|\langle  \iu \epsilon V v'(\epsilon,\sigma) - N(v(\epsilon,\sigma),\sigma), \phi_{\sigma}^* \rangle_{L^2}| = \mathcal{O}(\epsilon^2) \;\; \text{ as } \; \epsilon \to 0
$$ 
which follows from the bounds \eqref{bound-nonlinear} and \eqref{eq:bounds_epsilon}. Thus $f$ can be written as
$$
f(\epsilon,\sigma) = \epsilon \langle \iu   V u_\sigma', \phi_{\sigma}^* \rangle_{L^2} + \mathcal{O}(\epsilon^2) \;\; \text{ as } \;\; \epsilon \to 0.
$$
Note that if $\langle \iu   V u_\sigma', \phi_{\sigma}^* \rangle_{L^2} \not= 0$ the function $f(\epsilon,\sigma)$ has no root near $(0,\sigma)$ other than the trivial root $(0,\sigma)$. However, by our assumption on the effective potential $V_{\text{eff}}$ there exists $\sigma_0 \in \R$ such that
$$
\langle \iu   V u_{\sigma_0}', \phi_{\sigma_0}^* \rangle_{L^2} = \RT \int_{-\pi}^\pi \iu V(x) u_{\sigma_0}' \bar{\phi}_{\sigma_0}^* dx =V_{\text{eff}}(\sigma_0) = 0
$$
and
$$
\left.\partial_\sigma\langle \iu   V u_\sigma', \phi_{\sigma}^* \rangle_{L^2} \right\vert_{\sigma=\sigma_0} = \left.\partial_\sigma \RT \int_{-\pi}^\pi \iu V(x) u_{\sigma}' \bar{\phi}_\sigma^* dx \right\vert_{\sigma=\sigma_0}
=V'_{\text{eff}}(\sigma_0) \not= 0.
$$
Hence the Implicit Function Theorem can be applied to the function $\epsilon^{-1} f(\epsilon,\sigma)$ and yields a curve of unique non-trivial solutions $\sigma = \sigma(\epsilon)$ to the singular equation $f(\epsilon,\sigma) = 0$ such that $\sigma(0) = \sigma_0$. Finally we conclude that $u(\epsilon) = u_0(\cdot-\sigma(\epsilon))+ v(\epsilon,\sigma(\epsilon))$ solves \eqref{TWE} for small $\epsilon$.
\end{proof}

\section{Proof of the stability result}

In this section we will find the condition when the stationary solutions obtained in Theorem~\ref{Fortsetzung_nichttrivial} as a continuation of a stable solution $u_0$ of the LLE \eqref{LLE_original} 
are spectrally stable against co-periodic perturbations in the perturbed LLE (\ref{TWE_dyn}). Moreover, we prove the nonlinear asymptotic stability of stationary spectrally stable solutions. 

\subsection{Preliminary notes}

For our stability analysis we consider \eqref{TWE_dyn} as a 2 dimensional system by decomposing the function $u = u_1 + \iu u_2$ into real and imaginary part. This leads us to the system of dynamical equations
\begin{align}\label{eq:2d_LLE}
\left\{
\begin{array}{l}
\partial_t u_1  =  -d \partial_{x}^2 u_2 + \epsilon V(x) \partial_x u_1 + \zeta u_2 - \mu u_1 - (u_1^2+u_2^2) u_2 + f_0, \\ 
\partial_t u_2  =  d \partial_{x}^2 u_1 + \epsilon V(x) \partial_x u_2 - \zeta u_1 - \mu u_2 + (u_1^2+u_2^2) u_1,
\end{array}\right.
\end{align} 
equipped with the $2\pi$-periodic boundary condition on $\mathbb{R}$. 
The spectral problem associated to the nonlinear system (\ref{eq:2d_LLE}) 
can be written as
$$
\widetilde{L}_{u,\epsilon} \bm{v} = \lambda \bm{v}, \quad\lambda \in \C, \quad \bm{v} \in H_\per^2([-\pi,\pi],\C) \times H_\per^2([-\pi,\pi],\C)
$$
and the linearized operator $\widetilde{L}_{u,\epsilon}$ is given by (\ref{decomposition}). Note that the operator $A_u$ in the decomposition  
(\ref{decomposition}) is self-adjoint on $L^2([-\pi,\pi],\C) \times L^2([-\pi,\pi],\C)$ and $\widetilde{L}_{u,\epsilon}$ is an index $0$ Fredholm operator. Moreover we see that if $u_0$ is a non-degenerate solution of \eqref{TWE} for $\epsilon = 0$ then the following relations for the linearized operators are true:
$$
\ker \widetilde{L}_{u_0,0} = \spann\{\bm{u}_0'\}, \quad \ker  \widetilde{L}_{u_0,0}^* = \spann\{J \bm{\phi}_0^*\},
$$
where the vectors $\bm{u}_0' = (u_{01}',u_{02}')$ and $\bm{\phi}_0^* = (\phi_{01}^*,  \phi_{02}^*)$ are obtained from $u_0'=u_{01}' + \iu u_{02}'$ and $\phi_0^* = \phi_{01}^* + \iu \phi_{02}^*$. We recall that $\langle \bm u_0', J\bm\phi_0^* \rangle_{L^2} = 1$ due to normalization, cf.  Remark \ref{rem-kernel}.

Finally we observe that since the embedding 
$$
H_\per^2([-\pi,\pi],\C) \times H_\per^2([-\pi,\pi],\C) \hookrightarrow L^2([-\pi,\pi],\C) \times L^2([-\pi,\pi],\C)
$$ 
is compact, the linearization has compact resolvents and thus the spectrum of $\widetilde{L}_{u,\epsilon}$ consists of isolated eigenvalues with finite multiplicity where the only possible accumulation point is at $\infty$. In the following we will use the spaces
\begin{align*}
H_\per^2([-\pi,\pi],\C) =: X, \quad
H_\per^1([-\pi,\pi],\C) =:  Y,\quad
L^2([-\pi,\pi],\C) =: Z.
\end{align*}
Both the proof of Theorem~\ref{thm:spectral_stability} and Theorem~\ref{thm:nonlinear_stability} rely on the next lemma for the linearized operator $\widetilde{L}_{u(\epsilon),\epsilon}$ where $u(\epsilon)$  lies on the solution branch of Theorem~\ref{Fortsetzung_nichttrivial} and $|\epsilon|$ is small. The lemma gives spectral bounds for eigenvalues with large imaginary part together with a uniform resolvent estimate. The proof is presented in Section~\ref{sec:proof_resolvent_est}.

\begin{Lemma}\label{lem:resolvent_est}\label{lem:resolvent_estimate}
Denote $\Lambda_{\lambda^*}:=\{\lambda \in \C: \RT(\lambda) \geq 0 , |\IT(\lambda)|\geq \lambda^* \}$. Given $\epsilon_1>0$ sufficiently small there exists $\lambda^* >0$ such that we have the uniform resolvent bound
$$
\sup_{\lambda \in \Lambda_{\lambda^*}} \|(\lambda I - \widetilde{L}_{u(\epsilon),\epsilon})^{-1}\|_{L^2 \to L^2} < \infty
$$
for all $\epsilon\in [-\epsilon_1,\epsilon_1]$. 
\end{Lemma}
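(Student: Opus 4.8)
The plan is to establish the uniform resolvent bound on the region $\Lambda_{\lambda^*}$ by comparing $\widetilde{L}_{u(\epsilon),\epsilon}$ with a reference operator whose resolvent is explicitly controllable for large $|\IT(\lambda)|$, and then absorbing the difference as a relatively bounded perturbation. Concretely, I would write $\widetilde{L}_{u(\epsilon),\epsilon} = JA_0 + B_\epsilon$, where $A_0 = -d\partial_x^2 + \zeta$ acting diagonally (the leading constant-coefficient part) and $B_\epsilon$ collects the bounded zeroth-order multiplication terms coming from $3u_1^2+u_2^2$, $u_1 u_2$, etc., the damping $-\mu I$, and the first-order term $\epsilon V(x)\partial_x$. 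The operator $JA_0$ is (skew-)self-adjoint-like: on Fourier modes $\eu^{\iu kx}$ it acts as $\pm\iu(dk^2+\zeta)$ paired across the two components, so $\lambda I - JA_0$ is explicitly invertible with $\|(\lambda I - JA_0)^{-1}\|_{L^2\to L^2} \le C/\operatorname{dist}(\lambda, \iu\R\,(\text{or the relevant curve}))$, and in particular this norm is $O(1/|\IT\lambda|)$ uniformly for $\RT\lambda \ge 0$ once $|\IT\lambda|$ is large. The key quantitative point is that the eigenvalues of $JA_0$ lie on the imaginary axis (since $A_0$ is self-adjoint and $J$ skew-symmetric with $J^2=-I$), so for $\lambda$ with $\RT\lambda\ge 0$ we always have $|\lambda - \iu s| \ge \RT\lambda \ge 0$ and, more usefully, $|\lambda - \mathrm{spec}(JA_0)|$ stays comparable to $|\IT\lambda|$ when $|\IT\lambda|\to\infty$ because the spectrum of $JA_0$ near imaginary part $\tau$ consists of discrete points spaced like $\sqrt{\tau}$.

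Next I would handle the perturbation $B_\epsilon$. The zeroth-order multiplication pieces are bounded operators on $L^2$ with norm controlled uniformly in $\epsilon\in[-\epsilon_1,\epsilon_1]$ because $u(\epsilon)$ stays within $C\epsilon_1$ of $u_0(\cdot-\sigma_0)$ in $H^2\hookrightarrow L^\infty$ by Theorem~\ref{Fortsetzung_nichttrivial}. The term $\epsilon V(x)\partial_x$ is first-order, hence not $L^2$-bounded, but it is relatively bounded with respect to $A_0$ with relative bound $O(\epsilon)$: indeed $\|\epsilon V\partial_x \varphi\|_{L^2} \le |\epsilon|\|V\|_{L^\infty}\|\partial_x\varphi\|_{L^2} \le |\epsilon|\|V\|_{L^\infty}\big(\delta\|\partial_x^2\varphi\|_{L^2} + C_\delta\|\varphi\|_{L^2}\big)$ by interpolation. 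Writing $\lambda I - \widetilde{L}_{u(\epsilon),\epsilon} = (\lambda I - JA_0)\big(I - (\lambda I - JA_0)^{-1}B_\epsilon\big)$, it suffices to show $\|(\lambda I - JA_0)^{-1}B_\epsilon\|_{L^2\to L^2} \le 1/2$ for all $\lambda\in\Lambda_{\lambda^*}$ once $\lambda^*$ is large enough; then a Neumann series gives the uniform bound. For the $L^2$-bounded part of $B_\epsilon$ this follows from $\|(\lambda I - JA_0)^{-1}\|_{L^2\to L^2}\to 0$ as $|\IT\lambda|\to\infty$. For the $\epsilon V\partial_x$ part one writes $(\lambda I - JA_0)^{-1}\epsilon V\partial_x = \epsilon\,(\lambda I - JA_0)^{-1}(A_0+\kappa_0)\cdot (A_0+\kappa_0)^{-1}V\partial_x$ and uses that $(A_0+\kappa_0)^{-1}V\partial_x$ is a fixed bounded operator on $L^2$ (for suitable $\kappa_0$) while $\|(\lambda I - JA_0)^{-1}(A_0+\kappa_0)\|_{L^2\to L^2}$ stays bounded for $\lambda\in\Lambda_{\lambda^*}$ — this is again a mode-by-mode estimate: $|(dk^2+\kappa_0)/(\lambda \mp \iu(dk^2+\zeta))|$ is bounded uniformly over $k\in\Z$ and $\lambda$ with $\RT\lambda\ge0$, $|\IT\lambda|\ge\lambda^*$, since the denominator grows at least like $dk^2$ for large $k$ and is bounded below by $\lambda^*$ otherwise. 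Choosing $\epsilon_1$ small then makes the whole contribution $\le 1/2$.

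I expect the main obstacle to be the mode-by-mode resolvent estimate for $JA_0$ done carefully enough to get uniformity in \emph{both} the Fourier index $k$ and $\lambda\in\Lambda_{\lambda^*}$ simultaneously, and in particular verifying that the spectrum of $JA_0$ does not come too close to the region $\{\RT\lambda\ge 0, |\IT\lambda|\ge\lambda^*\}$ — the subtlety being that $\mathrm{spec}(JA_0)$ is a subset of $\iu\R$ itself, so one must exploit that it is a \emph{discrete} set accumulating only at $\iu\infty$ with growing gaps, rather than the whole imaginary axis, so that $\operatorname{dist}(\lambda,\mathrm{spec}(JA_0))$ together with the relative-boundedness factor behaves well. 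Once the $2\times 2$ block structure of $\lambda I - JA_0$ on each Fourier mode is diagonalized (its determinant on mode $k$ is $\lambda^2 + (dk^2+\zeta)^2$, so the inverse has entries $O(1/(|\lambda|^2 + (dk^2+\zeta)^2)^{1/2})$ times polynomial factors in $\lambda$ and $dk^2+\zeta$), the estimates are elementary but bookkeeping-heavy; I would organize them as: (a) a bound $\|(\lambda I - JA_0)^{-1}\|_{L^2\to L^2} \le C_1$ for all $\lambda\in\Lambda_{\lambda^*}$; (b) a bound $\|(\lambda I - JA_0)^{-1}\|_{L^2\to L^2} \le C_2/\lambda^*$; (c) a bound $\|(\lambda I - JA_0)^{-1}(A_0+\kappa_0)\|_{L^2\to L^2}\le C_3$; all uniform in $\lambda\in\Lambda_{\lambda^*}$. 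Combining (b) with the bounded part of $B_\epsilon$ and (c) with the first-order part of $B_\epsilon$ closes the Neumann series argument, proving the lemma, with $\lambda^*$ depending on $\epsilon_1$ (or vice versa) as stated.
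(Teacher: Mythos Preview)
Your approach has a genuine gap at the central step: the claimed resolvent bounds (a), (b), (c) for the reference operator $JA_0$ are false on $\Lambda_{\lambda^*}$. Since $A_0 = (-d\partial_x^2 + \zeta)I$ is self-adjoint and commutes with $J$, the operator $JA_0$ is skew-adjoint with spectrum $\{\pm\iu(dk^2+\zeta): k\in\Z\}\subset\iu\R$. These eigenvalues march off to $\iu\infty$, so for every $\lambda^*$ there are points $\lambda\in\Lambda_{\lambda^*}$ lying in (or arbitrarily close to) $\sigma(JA_0)$; the resolvent $(\lambda I - JA_0)^{-1}$ is therefore not even defined on all of $\Lambda_{\lambda^*}$, let alone $O(1/|\IT\lambda|)$ there. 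The growing gaps you mention (of order $\sqrt{\omega}$) control the distance from a \emph{generic} $\lambda$ to the nearest spectral point, but not for all $\lambda$ on the imaginary axis. Moving $-\mu I$ into the reference operator shifts the spectrum to $\{\RT z=-\mu\}$ and yields a uniform $O(1/\mu)$ bound, but this is still only $O(1)$ and cannot absorb the $O(1)$ zeroth-order perturbation coming from $|u|^2$ via a Neumann series. Likewise your estimate (c) fails at the resonant index: for $\lambda=\iu\omega$ and $k_0$ with $dk_0^2+\zeta\approx\omega$ one finds $\|(\lambda I-(JA_0-\mu I))^{-1}(A_0+\kappa_0)\|$ of order $\omega/\mu$ on that mode.

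The paper's proof confronts this resonance explicitly. After a gauge rotation $R$ reducing the first-order term to the constant-coefficient form $\epsilon\hat V_0\partial_x$, it uses an elementary but crucial fact (Proposition~\ref{prop:estimate_for_determinant}): for large $\omega$ there is at most one $k_0=k_0(\omega)\sim\omega^{1/2}$ at which $|d^2k^4+\mu^2-\omega^2|$ is small, while for all other $k$ one has $|d^2k^4+\mu^2-\omega^2|\geq \tfrac{1}{10}\max\{d^2k^2,\omega\}^{3/2}$. The space is then split via projections $P,Q$ onto the complement of and onto the two resonant modes $\pm k_0$. On $\range P$ the symbol bound gives $\|(PAP)^{-1}\|=O(\omega^{-1/2})$, which \emph{does} decay and absorbs the potential there. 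On the finite-dimensional $\range Q$ the zeroth Fourier mode of the potential is kept and the resulting $2\times2$ symbol has determinant with imaginary part $\sim\omega$, hence an $O(1)$ inverse. The off-diagonal coupling $Q_1JWQ_2$ is small because it involves $\hat W_{\pm 2k_0}=O(k_0^{-1})=O(\omega^{-1/2})$ (using $W\in H^1$). Combining the pieces yields $(I+T_1)\bm v=T_2\bm\psi$ with $\|T_1\|=O(\omega^{-1/2})$, $\|T_2\|=O(1)$, and now the Neumann series closes. The essential idea your argument is missing is this separate treatment of the near-resonant mode using the \emph{full} operator rather than the constant-coefficient reference.
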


\begin{Remark} \label{rem:extension_to_left} The uniformity of the resolvent estimate on the imaginary axis allows to sharpen the above result as follows. If we define $S$ as the supremum from Lemma~\ref{lem:resolvent_estimate} and let $0<\delta<1/S$ then the estimate
$$
\sup_{\lambda \in \Lambda_{\lambda^*}-\delta} \|(\lambda I - \widetilde{L}_{u(\epsilon),\epsilon})^{-1}\|_{L^2 \to L^2} < \infty
$$
holds. This follows from taking inverses in the identity
$$
(\lambda-\delta-\widetilde{L}_{u(\epsilon),\epsilon}) = (\lambda- \widetilde{L}_{u(\epsilon),\epsilon})(I-\delta(\lambda-\widetilde{L}_{u(\epsilon),\epsilon})^{-1}).
$$
\end{Remark}

\subsection{Proof of Theorem~\ref{thm:spectral_stability}}

For $\lambda \in \C$ we study the spectral problem
\begin{equation}\label{eq:spectral_problem}
\widetilde{L}_{u,\epsilon} \bm{v} = \lambda \bm{v}.
\end{equation}
Since \eqref{TWE} has the translational symmetry  in the case that $\epsilon=0$ we find
$$
\widetilde{L}_{u,0} \bm{u}'=0.
$$
For $\epsilon \not = 0$, this symmetry is broken, and the zero eigenvalue is expected to move either
into the stable or unstable half-plane. In our stability analysis, it is therefore important to understand how the critical zero eigenvalue behaves along the bifurcating solution branch given by $(-\epsilon^*,\epsilon^*) \ni \epsilon \mapsto u(\epsilon) \in X$ with $u(0) = u_{\sigma_0}$, where $\sigma_0$ is a simple zero of $V_{\mathrm{eff}}$ as in Theorem \ref{Fortsetzung_nichttrivial}. For the following calculations we will identify $u(\epsilon)$ with a vector-valued function $\bm{u}(\epsilon): \mathbb{T}\times\R\to\R^2$ and write this as $\bm{u}(\epsilon) \in X \times X$.

\medskip

We start with the tracking of the simple critical zero eigenvalue and set up the equation for the perturbed eigenvalue $\lambda_0 = \lambda_0(\epsilon)$ which reads
$$
\widetilde{L}_{u(\epsilon),\epsilon}  \bm{v}(\epsilon) = \lambda_0(\epsilon) \bm{v}(\epsilon).
$$
After a possible re-scaling we find that $\bm{v}(0) = \bm{u}_{\sigma_0}'$ and using regular perturbation theory for simple eigenvalues, cf. \cite{Kato,Kielhoefer}, the mapping $(-\epsilon^*,\epsilon^*)\ni\epsilon \mapsto \lambda_0(\epsilon) \in \R$ is continuously differentiable. Our first goal is to derive a formula for $\lambda_0'(0)$. If $\lambda_0'(0) > 0$ this means that the solutions $u(\epsilon)$ for $\epsilon  >0$ are spectrally unstable. In contrast, if  $\lambda_0'(0) < 0$, the solutions $u(\epsilon)$ for $\epsilon > 0$ are spectrally stable. 

\begin{Lemma}\label{lem:eig_0}
Let $\epsilon \mapsto \lambda_0(\epsilon)$ be the $C^1$ parametrization of the perturbed zero eigenvalue. Then the following formula holds true:
$$
\lambda_0'(0) = - \int_{-\pi}^{\pi} V'(x) \bm{u}_{\sigma_0}' \cdot J\bm{\phi}_{\sigma_0}^* dx .
$$
\end{Lemma}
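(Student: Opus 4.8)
The plan is to compute $\lambda_0'(0)$ by differentiating the eigenvalue relation $\widetilde{L}_{u(\epsilon),\epsilon}\bm{v}(\epsilon) = \lambda_0(\epsilon)\bm{v}(\epsilon)$ at $\epsilon = 0$ and then applying a solvability (Fredholm) condition. First I would write $\widetilde{L}_{u(\epsilon),\epsilon} = J A_{u(\epsilon)} - I(\mu - \epsilon V(x)\partial_x)$ and note the two sources of $\epsilon$-dependence: the explicit transport term $\epsilon V(x)\partial_x$, and the implicit dependence through $\bm{u}(\epsilon) = \bm{u}_{\sigma_0} + (\text{corrections})$, which affects $A_{u(\epsilon)}$. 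Differentiating at $\epsilon = 0$, and abbreviating $\dot{(\;)} = \tfrac{d}{d\epsilon}\big|_{\epsilon=0}$, gives
\begin{equation*}
\big(J \dot{A} + I V(x)\partial_x\big)\bm{v}(0) + \widetilde{L}_{u_{\sigma_0},0}\,\dot{\bm{v}} = \lambda_0'(0)\bm{v}(0) + \lambda_0(0)\dot{\bm{v}},
\end{equation*}
and since $\lambda_0(0) = 0$ and $\bm{v}(0) = \bm{u}_{\sigma_0}'$, this reduces to $\widetilde{L}_{u_{\sigma_0},0}\,\dot{\bm{v}} = \lambda_0'(0)\bm{u}_{\sigma_0}' - \big(J\dot{A} + I V(x)\partial_x\big)\bm{u}_{\sigma_0}'$.

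Next I would impose the solvability condition: since $\ker\widetilde{L}_{u_{\sigma_0},0}^* = \spann\{J\bm{\phi}_{\sigma_0}^*\}$, the right-hand side must be $L^2$-orthogonal to $J\bm{\phi}_{\sigma_0}^*$. Using the normalization $\langle \bm{u}_{\sigma_0}', J\bm{\phi}_{\sigma_0}^*\rangle_{L^2} = 1$ from Remark~\ref{rem-kernel}, this yields
\begin{equation*}
\lambda_0'(0) = \big\langle \big(J\dot{A} + I V(x)\partial_x\big)\bm{u}_{\sigma_0}', J\bm{\phi}_{\sigma_0}^*\big\rangle_{L^2}.
\end{equation*}
The term $\langle I V(x)\partial_x \bm{u}_{\sigma_0}', J\bm{\phi}_{\sigma_0}^*\rangle_{L^2} = \langle V(x)\bm{u}_{\sigma_0}'', J\bm{\phi}_{\sigma_0}^*\rangle_{L^2}$, and I expect the contribution $\langle J\dot{A}\bm{u}_{\sigma_0}', J\bm{\phi}_{\sigma_0}^*\rangle_{L^2}$ to combine with this via an integration by parts. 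The key algebraic identity to exploit is that differentiating the stationary equation $\widetilde{L}_{u(\epsilon),\epsilon}$-relation (or rather the identity $J A_{u(\epsilon)}\bm{u}(\epsilon)' = \dots$ obtained by differentiating \eqref{TWE} in $x$) produces exactly the combination needed. Concretely, differentiating the stationary equation $-du_{\sigma}'' + \iu\epsilon V u' + \dots = 0$ in $x$ at $\epsilon = 0$ relates $\dot{A}\bm{u}_{\sigma_0}'$ to terms involving $V'$, $V$, and $\partial_x$ acting on $\bm{u}_{\sigma_0}'$, analogous to the computation already appearing in the commented-out Crandall–Rabinowitz proof where $\partial_\sigma \nabla_{v,\epsilon}F[\psi,1] = -L_{u_{\sigma_0}}\psi' - \iu V'(x)u_{\sigma_0}'$.

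After these manipulations, I anticipate that the $V$-terms and the $\dot{\bm{u}}$-terms cancel against each other (using that $\dot{\bm{v}}$ and the correction $\dot{\bm{u}}$ are related, and that $\widetilde{L}_{u_{\sigma_0},0}$ applied to various $x$-derivatives lands in $\range\widetilde{L}_{u_{\sigma_0},0} \perp J\bm{\phi}_{\sigma_0}^*$), leaving only
\begin{equation*}
\lambda_0'(0) = -\int_{-\pi}^{\pi} V'(x)\,\bm{u}_{\sigma_0}' \cdot J\bm{\phi}_{\sigma_0}^*\,dx,
\end{equation*}
which matches $-V_{\mathrm{eff}}'(\sigma_0)$ by the formula in Theorem~\ref{thm:spectral_stability}. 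The main obstacle will be bookkeeping: carefully tracking the implicit $\epsilon$-dependence of $\bm{u}(\epsilon)$ through both $\sigma(\epsilon)$ and the Lyapunov–Schmidt correction $v(\epsilon,\sigma(\epsilon))$, and showing that all the resulting extra terms are orthogonal to $J\bm{\phi}_{\sigma_0}^*$ (equivalently, lie in the range of $\widetilde{L}_{u_{\sigma_0},0}$), so that only the clean $V'$-integral survives. A secondary point requiring care is justifying the $C^1$-differentiability of $\epsilon \mapsto (\lambda_0(\epsilon), \bm{v}(\epsilon))$, which follows from regular perturbation theory for the algebraically simple eigenvalue together with the resolvent bounds of Lemma~\ref{lem:resolvent_estimate}, but this is already asserted in the text preceding the lemma.
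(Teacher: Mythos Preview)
Your proposal is correct and follows essentially the same approach as the paper: differentiate the eigenvalue equation at $\epsilon=0$, apply the solvability condition against $J\bm{\phi}_{\sigma_0}^*$, and use the $\epsilon$- and $x$-derivatives of the stationary equation to reduce the inhomogeneous term $(J\dot A + V\partial_x)\bm u_{\sigma_0}'$ to $-V'(x)\bm u_{\sigma_0}'$ modulo $\range\widetilde L_{u_{\sigma_0},0}$. The paper organizes the computation as $\partial_\epsilon$ then $\partial_x$ of the stationary equation, whereas you lean toward $\partial_x$ then $\partial_\epsilon$; both give the same identity.

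One remark: your anticipated ``main obstacle'' of tracking the Lyapunov--Schmidt decomposition $\bm u(\epsilon)=\bm u_{\sigma(\epsilon)}+v(\epsilon,\sigma(\epsilon))$ turns out to be unnecessary. The paper's argument uses only that $\bm u(\epsilon)$ solves the stationary equation, so differentiating the identity $\widetilde L_{u(\epsilon),\epsilon}\bm u'(\epsilon)=-\epsilon V'(x)\bm u'(\epsilon)$ at $\epsilon=0$ directly gives $(J\dot A + V\partial_x)\bm u_{\sigma_0}' = -V'(x)\bm u_{\sigma_0}' - \widetilde L_{u_{\sigma_0},0}\partial_\epsilon\bm u'(0)$, and the last term is automatically orthogonal to $J\bm\phi_{\sigma_0}^*$.
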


\begin{proof}
On the one hand, if we differentiate the equation
$$
\widetilde{L}_{u(\epsilon),\epsilon}  \bm{v}(\epsilon) = \lambda_0(\epsilon) \bm{v}(\epsilon).
$$
with respect to $\epsilon$ and evaluate at $\epsilon = 0$ we find
$$
\widetilde{L}_{u_{\sigma_0},0}  \partial_\epsilon\bm{v}(0) - J N_u \bm{u}_{\sigma_0}' + V(x) \bm{u}_{\sigma_0}'' = \lambda_0'(0)  \bm{u}_{\sigma_0}',
$$
where $N_u$ is given by
$$
N_u = 2
\begin{pmatrix}
3 u_{\sigma_01} \partial_\epsilon u_1(0) + u_{\sigma_02} \partial_\epsilon u_2(0) & u_{\sigma_01}  \partial_\epsilon u_2(0) + u_{\sigma_02}  \partial_\epsilon u_1(0) \\
u_{\sigma_01}  \partial_\epsilon u_2(0) + u_{\sigma_02}  \partial_\epsilon u_1(0) & u_{\sigma_01} \partial_\epsilon u_1(0) + 3 u_{\sigma_02} \partial_\epsilon u_2(0)
\end{pmatrix}.
$$
On the other hand, if we differentiate \eqref{TWE} with respect to $\epsilon$ at $\epsilon=0$, then we obtain
$$
\widetilde{L}_{u_{\sigma_0},0}  \partial_\epsilon\bm{u}(0) + V(x) \bm{u}_{\sigma_0}'=0.
$$
If we differentiate this equation with respect to $x$ we find
$$
\widetilde{L}_{u_{\sigma_0},0}  \partial_\epsilon\bm{u}'(0) + V(x) \bm{u}_{\sigma_0}'' + V'(x) \bm{u}_{\sigma_0}' - JN_u \bm{u}_{\sigma_0}'= 0.
$$
Combining both equations yields
$$
\widetilde{L}_{u_{\sigma_0},0}  [\partial_\epsilon\bm{v}(0) -  \partial_\epsilon\bm{u}'(0)] -V'(x) \bm{u}_{\sigma_0}' = \lambda_0'(0)  \bm{u}_{\sigma_0}'
$$
and testing this equation with $J \bm{\phi}_{\sigma_0}^*\in \ker \widetilde{L}^*_{u_{\sigma_0},0}$ we obtain 
$$
-\int_{-\pi}^{\pi} V'(x) \bm{u}_{\sigma_0}'\cdot J \bm{\phi}_{\sigma_0}^* dx=
-\langle V'(x) \bm{u}_{\sigma_0}', J \bm{\phi}_{\sigma_0}^* \rangle_{L_2} = \lambda_0'(0) \langle \bm{u}_{\sigma_0}', J \bm{\phi}_{\sigma_0}^* \rangle_{L_2} = \lambda_0'(0)
$$
which finishes the proof. 
\end{proof}

By Lemma~\ref{lem:eig_0} we can control the critical part of the spectrum close to the origin along the bifurcating solution branch. In fact, using standard perturbation theory, cf. \cite{Kato}, we know that all the eigenvalues of $\widetilde{L}_{u(\epsilon),\epsilon}$ depend continuously on the parameter $\epsilon$. However, this dependence is in general not uniform w.r.t. all eigenvalues, so we have to make sure that no unstable spectrum occurs far from the origin. At this point, it is worth mentioning that we have an a-priori bound on the spectrum of the form
$$
\exists \lambda_* = \lambda_*(u(\epsilon),\epsilon)>0: \quad \lambda \in \sigma(\widetilde{L}_{u(\epsilon),\epsilon}) \implies \RT(\lambda) \leq \lambda_*.
$$
This bound follows from the Hille-Yoshida Theorem since $\widetilde{L}_{u(\epsilon),\epsilon}$ generates a $C_0$-semigroup on $Z\times Z$, cf. Lemma~\ref{lem:gen_in_L2} below. It can also be shown directly by testing the eigenvalue problem with the corresponding eigenfunction and integration by parts. As a conclusion, spectral stability holds if we can prove that there exists $\lambda^* >0$ such that
$$
\{ \lambda \in \C: 0 \leq \RT(\lambda)\leq \lambda_*, |\IT(\lambda)|\geq \lambda^* \} \subset \rho(\widetilde{L}_{u(\epsilon),\epsilon}).
$$
This relation is shown as part of Lemma~\ref{lem:resolvent_est} and it is extended to the left of the origin by the subsequent Remark~\ref{rem:extension_to_left}. Since in any rectangle 
$\{ \lambda \in \C: -M \leq \RT(\lambda)\leq \lambda_*, |\IT(\lambda)|\leq \lambda^* \}$ there are only finitely many eigenvalues of $\widetilde{L}_{u(\epsilon),\epsilon}$ and they depend (uniformly) continuoulsy on $\epsilon$, our assumption (A2) on $\widetilde{L}_{u_0,0}$ shows that none of these eigenvalues (except possibly the critical one) can move into the right half plane if $|\epsilon|$ is small. Therefore, only the movement of the critical eigenvalue determines the spectral stability and therefore Theorem~\ref{thm:spectral_stability} is true.

\subsection{Proof of Theorem~\ref{thm:nonlinear_stability}}

In order to prove nonlinear asymptotic stability of stationary solutions of \eqref{eq:2d_LLE} it is enough to show exponential stability of the semigroup of the linearization in $Y \times Y$, see e.g. \cite{Cazenave}. For the proof of Theorem~\ref{thm:nonlinear_stability} we will show the following three steps:
\begin{itemize}
\item[(i)] Prove that $\widetilde{L}_{u(\epsilon),\epsilon}$ is the generator of a $C_0$-semigroup on $Z \times Z$.
\item[(ii)] Show exponential decay of $(\eu^{\widetilde{L}_{u(\epsilon),\epsilon} t})_{t \geq 0}$ in $Z \times Z$.
\item[(iii)] Show exponential decay of $(\eu^{\widetilde{L}_{u(\epsilon),\epsilon} t})_{t \geq 0}$ in $Y \times Y$.
\end{itemize}

For step (i), we establish the generator properties of the linearization in $Z \times Z$.

\begin{Lemma}\label{lem:gen_in_L2}
The operator $\widetilde{L}_{u(\epsilon),\epsilon}$ generates a $C_0$-semigroup on $Z \times Z$.
\end{Lemma}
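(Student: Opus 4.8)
The plan is to write $\widetilde{L}_{u(\epsilon),\epsilon} = J A_{u(\epsilon)} - I(\mu - \epsilon V(x)\partial_x)$ and to exhibit it as a bounded perturbation of an operator that is manifestly a $C_0$-generator on $Z\times Z = L^2\times L^2$. The natural candidate for the ``principal part'' is the operator $\mathcal{A}_0 := J(-d\partial_x^2) - I\mu + I\epsilon V(x)\partial_x$, or even just $\mathcal{A}_{00} := -dJ\partial_x^2$, since $A_{u(\epsilon)}$ differs from $-d\partial_x^2 I$ only by a bounded (zeroth-order, smooth coefficient) multiplication operator, and $-I\mu$ is bounded. The first-order term $I\epsilon V(x)\partial_x$ is the only genuinely unbounded perturbation besides $\partial_x^2$, so the key point is to absorb it.

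First I would identify the generator of the ``free'' part. The operator $-dJ\partial_x^2$ with domain $X\times X = H^2_{\per}\times H^2_{\per}$ has compact resolvent and, being unitarily equivalent via Fourier series to the block-diagonal family of $2\times2$ matrices $\begin{pmatrix} 0 & dk^2 \\ -dk^2 & 0\end{pmatrix}$ over $k\in\Z$, generates a $C_0$-group of isometries on $Z\times Z$ (each block is skew-symmetric, hence exponentiates to a rotation). Thus $\mathcal{A}_{00}$ is a $C_0$-generator. Next, adding the bounded operator $B := J(A_{u(\epsilon)} + d\partial_x^2 I) - I\mu$, which is multiplication by an $L^\infty$ matrix, preserves the generator property by the bounded perturbation theorem (Hille--Yosida / Phillips). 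It remains to incorporate the first-order term $C := I\epsilon V(x)\partial_x$.

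The main obstacle is precisely this first-order term $\epsilon V(x)\partial_x$, which is \emph{not} bounded on $Z\times Z$ and not relatively bounded with relative bound $<1$ against $\mathcal{A}_{00}$ in an obvious symmetric way; one must use relative boundedness together with a generation theorem for relatively bounded perturbations, or, more cleanly, exploit the antisymmetric structure. The approach I favor is the following: note that $V(x)\partial_x = \partial_x V(x) - V'(x)$, so $V(x)\partial_x + \tfrac12 V'(x) = \tfrac12(V\partial_x + \partial_x V)$ is skew-symmetric on $L^2$, modulo the bounded term $\tfrac12 V'(x)$. Hence $\epsilon V(x)\partial_x = \tfrac{\epsilon}{2}(V\partial_x + \partial_x V) - \tfrac{\epsilon}{2}V'(x)$, and the skew-symmetric piece $\tfrac{\epsilon}{2}I(V\partial_x+\partial_x V)$ generates (on $Z\times Z$) a unitary $C_0$-group by Stone's theorem, as it is essentially self-adjoint after multiplication by $\iu$ (it is $\iu$ times a first-order symmetric operator with smooth real coefficient on the compact manifold $\mathbb{T}$, a standard fact). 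One then checks that $\mathcal{A}_{00}$ and $\tfrac{\epsilon}{2}I(V\partial_x+\partial_x V)$, both being $C_0$-generators, combine: since the cross term is lower order, one can invoke a perturbation result such as the one for sums of generators when one summand is $\mathcal{A}_{00}$-bounded. Concretely, $\|\epsilon V \partial_x \bm v\|_{Z}\le |\epsilon|\,\|V\|_{L^\infty}\|\bm v'\|_Z$, and by interpolation $\|\bm v'\|_Z \le \eta\|\bm v''\|_Z + C_\eta\|\bm v\|_Z \le \eta\,|d|^{-1}\|\mathcal{A}_{00}\bm v\|_Z + C_\eta\|\bm v\|_Z$ for every $\eta>0$; thus $C$ is $\mathcal{A}_{00}$-bounded with relative bound $0$. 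Since $\mathcal{A}_{00}$ generates a $C_0$-group (hence, trivially, $-\mathcal{A}_{00}$ does too and both halves of the resolvent set estimate hold), the Hille--Yosida-type stability theorem for generators under relatively bounded perturbations with relative bound $<1$ applies, and $\mathcal{A}_{00}+C$ generates a $C_0$-semigroup; finally add the bounded operator $B$ to conclude.

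In summary, the steps are: (1) $-dJ\partial_x^2$ on $X\times X$ generates a $C_0$-group on $Z\times Z$ (Fourier diagonalization, skew-symmetric blocks, or directly via Stone's theorem applied to $\iu d J \partial_x^2$ on the torus); (2) the first-order term $\epsilon V(x)\partial_x$ is $(-dJ\partial_x^2)$-bounded with relative bound zero, by the interpolation inequality on $H^2_{\per}\hookrightarrow H^1_{\per}$; (3) apply the perturbation theorem for relatively bounded perturbations of generators (e.g.\ the standard corollary to the Hille--Yosida theorem) to get that $-dJ\partial_x^2 + \epsilon V\partial_x$ generates a $C_0$-semigroup; (4) add the bounded multiplication operator $J(A_{u(\epsilon)}+d\partial_x^2 I) - I\mu$ and use the bounded perturbation theorem to conclude that $\widetilde{L}_{u(\epsilon),\epsilon}$ generates a $C_0$-semigroup on $Z\times Z$. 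The only delicate point is step (3)/the handling of the first-order term; everything else is routine, and the fact that we are on the compact torus (no boundary terms) is what makes the skew-symmetry argument clean.
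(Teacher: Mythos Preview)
Your overall strategy matches the paper's: decompose $\widetilde{L}_{u(\epsilon),\epsilon}$ into a principal second-order part generating a contraction semigroup, a first-order relatively bounded piece, and a bounded remainder. The paper writes $\widetilde{L}_{u(\epsilon),\epsilon} = L_1+L_2+L_3$ with $L_1 = -dJ\partial_x^2 - \mu I$, $L_2 = \epsilon V\partial_x - \tfrac{|\epsilon|}{2}\|V'\|_{L^\infty} I$, and $L_3$ bounded, which is your decomposition up to reshuffling bounded terms.

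There is, however, a genuine gap in your step~(3). There is \emph{no} general theorem asserting that a relatively bounded perturbation (even with relative bound $0$) of a $C_0$-generator is again a generator; the fact that $\mathcal{A}_{00}$ generates a group does not rescue this. The result the paper invokes (Engel--Nagel, Chapter~III, Theorem~2.7) requires the perturbation to be \emph{dissipative} in addition to relatively bounded. Your $C = \epsilon V\partial_x$ is not dissipative, since $\RT\langle C\bm\varphi,\bm\varphi\rangle_{L^2} = -\tfrac{\epsilon}{2}\int_{-\pi}^\pi V'|\bm\varphi|^2\,dx$ can have either sign. You already hold the remedy: your decomposition $\epsilon V\partial_x = \tfrac{\epsilon}{2}(V\partial_x+\partial_x V) - \tfrac{\epsilon}{2}V'$ splits $C$ into a skew-symmetric (hence dissipative) $\mathcal{A}_{00}$-bounded piece plus a bounded piece; invoke the dissipative perturbation theorem for the former and absorb the latter into $B$. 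The paper achieves the same effect by shifting $\epsilon V\partial_x$ by the constant $-\tfrac{|\epsilon|}{2}\|V'\|_{L^\infty}$, which makes $L_2$ dissipative outright, and then applies that same theorem.
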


\begin{proof}
We split the operator into
$$
\widetilde{L}_{u(\epsilon),\epsilon} = L_1 + L_2 + L_3,
$$
where $L_1: X \times X \to Z \times Z$, 
$L_2: Y \times Y \to Z \times Z$, and 
$L_3: Z \times Z \to Z \times Z$ are defined by 
$$
L_1
\begin{pmatrix}
\varphi_1 \\ \varphi_2
\end{pmatrix} 
:=
\begin{pmatrix}
-d  \varphi_2'' - \mu \varphi_1 \\
d \varphi_1''  - \mu \varphi_2
\end{pmatrix}, 
$$
$$
L_2
\bm{\varphi}
:= \epsilon V(x) \bm{\varphi}' - \frac{|\epsilon|}{2} \|V'\|_{L^\infty} \bm{\varphi},
$$
and
$$
L_3
\begin{pmatrix}
\varphi_1 \\ \varphi_2
\end{pmatrix}
:=
\begin{pmatrix}
\frac{|\epsilon|}{2} \|V'\|_{L^\infty}-2u_1 u_2  & \zeta-(u_1^2+3u_2^2) \\
-\zeta + 3u_1^2+ u_2^2 & \frac{|\epsilon|}{2} \|V'\|_{L^\infty} +2 u_1 u_2
\end{pmatrix}
\begin{pmatrix}
\varphi_1 \\ \varphi_2
\end{pmatrix}
$$
We will show that 
\begin{itemize}
\item[(i)] $L_1$ generates a contraction semigroup.
\item[(ii)] $L_2$ is dissipative and bounded relative to $L_1$.
\item[(iii)] $L_3$ is a bounded operator on $Z \times Z$.
\end{itemize}
By using the semigroup theory, this will prove that the sum $L_1 + L_2 + L_3$ is the generator of a $C_0$-semigroup on $Z \times Z$.

\medskip

Part (i): It follows that $\RT \langle L_1 \bm{\varphi},\bm{\varphi}\rangle_{L^2} = -\mu \|\bm{\varphi}\|_{L^2}^2 \leq 0$ for every $\bm{\varphi} \in X \times X$, and $\lambda-L_1$ is invertible for every $\lambda>0$ which can be seen using Fourier transform. By the Lumer-Phillips Theorem we find that $L_1$ generates a contraction semigroup on $Z \times Z$.

\medskip

Part (ii): We have to show that 
$$
\forall \bm{\varphi} \in Y \times Y: \quad\RT\langle L_2 \bm{\varphi},\bm{\varphi} \rangle_{L^2} \leq 0
$$
and
$$
\forall a>0, \, \exists b>0:\quad  \|L_2 \bm{\varphi}\|_{L^2} \leq a \|L_1\bm{\varphi}\|_{L^2} + b \|\bm{\varphi}\|_{L^2} \quad\forall \bm{\varphi}\in X \times X.
$$
Let $\bm{\varphi} = (\varphi_1,\varphi_2) \in Y \times Y$ and observe that integration by parts yields
\begin{align*}
\RT \int_{-\pi}^{\pi} \epsilon V(x) (\varphi_1' \bar\varphi_1 +  \varphi_2' \bar\varphi_2 ) - \frac{|\epsilon|}{2} \|V'\|_{L^\infty} |\bm\varphi|^2 dx
= \int_{-\pi}^{\pi} -\frac{\epsilon}{2} V'(x) |\bm\varphi|^2  - \frac{|\epsilon|}{2} \|V'\|_{L^\infty} |\bm\varphi|^2 dx \leq 0
\end{align*}
which shows that $L_2$ is dissipative. Further, if $\bm{\varphi} \in X \times X$, then for every $a>0$ we have
\begin{align*}
\|\epsilon V \bm\varphi' - \frac{|\epsilon|}{2} \|V'\|_{L^\infty} \bm\varphi\|_{L^2} 
&\leq |\epsilon| \|V\|_{L^\infty} \|\bm\varphi'\|_{L^2} + \frac{|\epsilon|}{2} \|V'\|_{L^\infty} \|\bm\varphi\|_{L^2} \\
&\leq |\epsilon| a \|V\|_{L^\infty} \|\bm\varphi''\|_{L^2} + \frac{|\epsilon|}{4a} \|V\|_{L^\infty} \|\bm\varphi\|_{L^2} + \frac{|\epsilon|}{2} \|V'\|_{L^\infty} \|\bm\varphi\|_{L^2} \\
&\leq \frac{|\epsilon| a}{|d|} \|V\|_{L^\infty} \|L_1 \bm\varphi\|_{L^2} 
+ |\epsilon| \left( \left(\frac{a \mu}{|d|} +\frac{1}{4a} \right) 
\|V\|_{L^\infty} + \frac{1}{2} \|V'\|_{L^\infty} \right) \|\bm\varphi\|_{L^2}
\end{align*}
where we used the inequality
$$
\forall \bm\varphi \in X \times X, \,\forall a>0:\quad \|\bm\varphi'\|_{L^2} \leq a \|\bm\varphi''\|_{L^2} + \frac{1}{4a} \|\bm\varphi\|_{L^2}.
$$
Hence, by the dissipative perturbation theorem, cf.~Chapter III, Theorem~2.7 in \cite{Engel_Nagel}, for generators the operator $L_1+L_2 : X \times X \to Z \times Z$ generates a contraction semigroup. 

\medskip

Part (iii): It follows that $L_3$ is bounded on $Z \times Z$. Then the bounded perturbation theorem for generators, cf.~Chapter III, Theorem~1.3 in \cite{Engel_Nagel}, yields that $\widetilde{L}_{u(\epsilon),\epsilon} = L_1+L_2+L_3$ generates a $C_0$-semigroup on $Z \times Z$ as desired.
\end{proof}

\begin{Remark}
Using similar arguments, one can show that $\widetilde{L}_{u(\epsilon),\epsilon}$ is the generator of a $C_0$-semigroup on $Y \times Y$.
\end{Remark}

For step (ii), we use a characterization of exponential decay of semigroups in Hilbert spaces known as the Gearhart-Greiner-Pr\"uss~Theorem, cf.~Chapter V, Theorem~1.11 in \cite{Engel_Nagel}.

\begin{Theorem}[Gearhart-Greiner-Pr\"uss Theorem]\label{Thm:GP}
Let $L$ be the generator of a $C_0$-semigroup $(\eu^{Lt})_{t\geq 0}$ on a complex Hilbert space $H$. Then $(\eu^{Lt})_{t\geq 0}$ is exponentially stable in $H$ if and only if 
$$
\{\lambda \in \C:\RT (\lambda) \geq 0\} \subset \rho(L)
\quad\text{and}\quad
\sup_{\RT \lambda \geq 0} \|(\lambda I-L)^{-1}\|_{H \to H} < \infty.
$$
\end{Theorem}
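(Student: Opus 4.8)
\emph{Overview and necessity.} The two implications are handled separately; ``exponentially stable $\Rightarrow$ resolvent bounded'' is routine, while the converse is where the Hilbert-space structure is genuinely used. For necessity: if $\|\eu^{Lt}\|_{H\to H}\le M\eu^{-\eta t}$ for some $M\ge1$, $\eta>0$, then for every $\lambda$ with $\RT\lambda\ge0$ the Laplace integral $\int_0^\infty\eu^{-\lambda t}\eu^{Lt}\,dt$ converges absolutely in $\mathcal{L}(H)$, represents $(\lambda I-L)^{-1}$, and has norm at most $\int_0^\infty M\eu^{-\eta t}\,dt=M/\eta$; hence $\{\RT\lambda\ge0\}\subset\rho(L)$ with a uniform resolvent bound.

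\emph{Sufficiency: reduction to boundedness.} Assume $\{\RT\lambda\ge0\}\subset\rho(L)$ and $M:=\sup_{\RT\lambda\ge0}\|(\lambda I-L)^{-1}\|<\infty$. First I would note, exactly as in Remark~\ref{rem:extension_to_left}, that the Neumann series $(\lambda-\delta-L)^{-1}=(\lambda-L)^{-1}\sum_{n\ge0}\bigl(\delta(\lambda-L)^{-1}\bigr)^n$ extends the resolvent set and a uniform bound to a half-plane $\{\RT\lambda\ge-\delta\}$ for some $\delta\in(0,1/M)$; in particular $\iu\R\subset\rho(L)$. Then it suffices to prove the single claim that \emph{any} $C_0$-semigroup whose generator satisfies the resolvent hypothesis on the closed right half-plane is uniformly bounded: applying this claim to the rescaled semigroup $(\eu^{\delta t}\eu^{Lt})_{t\ge0}$ --- whose generator $L+\delta$ satisfies the same hypothesis by the left extension --- yields $\sup_{t\ge0}\eu^{\delta t}\|\eu^{Lt}\|<\infty$, i.e.\ exponential stability.

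\emph{Sufficiency: boundedness via Plancherel.} For $x\in D(L)$ and $\mu>0$ large enough that $t\mapsto\eu^{-\mu t}\|\eu^{Lt}\|$ is integrable, the $H$-valued map $t\mapsto\mathbf{1}_{(0,\infty)}(t)\,\eu^{-\mu t}\eu^{Lt}x$ lies in $L^2(\R;H)$ with Fourier transform $s\mapsto((\mu+\iu s)I-L)^{-1}x$, so Plancherel gives
\[
\int_0^\infty\eu^{-2\mu t}\|\eu^{Lt}x\|^2\,dt=\frac{1}{2\pi}\int_\R\bigl\|((\mu+\iu s)I-L)^{-1}x\bigr\|^2\,ds .
\]
I would bound the right-hand side uniformly in $\mu>0$ by splitting the $s$-integral at $|s|=1$: for $|s|\le1$ use $\|((\mu+\iu s)I-L)^{-1}\|\le M$; for $|s|\ge1$ use the identity $((\mu+\iu s)I-L)^{-1}x=(\mu+\iu s)^{-1}\bigl(x+((\mu+\iu s)I-L)^{-1}Lx\bigr)$, which decays like $|s|^{-1}$ and is therefore square-integrable. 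Letting $\mu\downarrow0$ (monotone convergence) yields $\int_0^\infty\|\eu^{Lt}x\|^2\,dt\le C(\|x\|^2+\|Lx\|^2)$ for all $x\in D(L)$, and together with the reproducing inequality $\|\eu^{Lt}x\|^2\le(\sup_{0\le\tau\le1}\|\eu^{L\tau}\|)^2\int_0^\infty\|\eu^{Ls}x\|^2\,ds$ for $t\ge1$ this gives $\sup_{t\ge0}\|\eu^{Lt}x\|\le C'(\|x\|+\|Lx\|)$ on $D(L)$. Running the same argument for the adjoint semigroup $(\eu^{L^*t})_{t\ge0}$ --- which is again a $C_0$-semigroup because $H$ is a Hilbert space, with $\|(\lambda I-L^*)^{-1}\|=\|(\bar\lambda I-L)^{-1}\|\le M$ on $\RT\lambda\ge0$ --- gives the analogous bound for $\eu^{L^*t}y$, $y\in D(L^*)$. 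Feeding both estimates into the identity $t\langle\eu^{Lt}x,y\rangle=\int_0^t\langle\eu^{Ls}x,\eu^{L^*(t-s)}y\rangle\,ds$ together with Cauchy--Schwarz, and then passing from these dense-domain estimates to an estimate on all of $H$ by a closed-graph / uniform-boundedness argument, one concludes $\sup_{t\ge0}\|\eu^{Lt}\|_{H\to H}<\infty$, which is the missing claim.

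\emph{Main obstacle.} Necessity and the rescaling reduction are routine; the hard part is the boundedness step, where two points require care. The first is the Fourier-analytic identification above: the integrals along $\iu\R$ are only conditionally convergent, so to make the argument rigorous one should first work with $x\in D(L^2)$ (where shifting the inverse-Laplace contour onto $\iu\R$ is legitimate since $\|((\mu+\iu s)I-L)^{-1}x\|=O(|s|^{-2})$) and only then pass back to $D(L)$. The second, and the genuine heart of the Gearhart--Greiner--Pr\"uss theorem, is the upgrade from the \emph{graph-norm} bounds valid on the dense subspaces $D(L)$ and $D(L^*)$ to a true \emph{operator-norm} bound on $(\eu^{Lt})_{t\ge0}$; this is where completeness and the Hilbert-space geometry enter in an essential way, and it is precisely the point at which the statement fails on a general Banach space.
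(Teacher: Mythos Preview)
The paper does not prove this theorem; it is quoted verbatim as a known result from Engel--Nagel, Chapter~V, Theorem~1.11, and is used only as a black box in the proof of Theorem~\ref{thm:nonlinear_stability}. So there is no ``paper's own proof'' to compare against.

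That said, your sketch follows the standard Plancherel route and is correct in outline, but the step you yourself flag as the main obstacle is genuinely gapped: from $\sup_{t\ge 0}\|\eu^{Lt}x\|\le C'(\|x\|+\|Lx\|)$ on $D(L)$ together with the analogous adjoint estimate, no closed-graph or uniform-boundedness principle will manufacture $\sup_{t\ge 0}\|\eu^{Lt}\|_{H\to H}<\infty$. Graph-norm control on a dense subspace simply does not upgrade to $H$-norm control of the family; this is exactly where naive arguments fail and is not repaired by the functional-analytic devices you name.

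The fix is earlier in the argument and makes the detour through $D(L)$ unnecessary. Instead of splitting the $s$-integral at $|s|=1$ and using $x\in D(L)$ to produce $|s|^{-1}$ decay, compare vertical lines via the resolvent identity: for any $\mu\ge 0$ and a fixed $\mu_0>\omega_0$ (the growth bound) one has
\[
\|((\mu+\iu s)I-L)^{-1}x\|\ \le\ \bigl(1+|\mu_0-\mu|\,M\bigr)\,\|((\mu_0+\iu s)I-L)^{-1}x\|,
\]
hence $\int_{\R}\|((\mu+\iu s)I-L)^{-1}x\|^2\,ds\le(1+\mu_0 M)^2\int_{\R}\|((\mu_0+\iu s)I-L)^{-1}x\|^2\,ds$. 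The right-hand integral equals $2\pi\int_0^\infty\eu^{-2\mu_0 t}\|\eu^{Lt}x\|^2\,dt\le C\|x\|^2$ by the a~priori growth bound, since $\mu_0>\omega_0$. Letting $\mu\downarrow 0$ in your Plancherel identity now yields $\int_0^\infty\|\eu^{Lt}x\|^2\,dt\le C\|x\|^2$ for \emph{every} $x\in H$. From here either invoke Datko's theorem directly, or run your convolution/duality argument: the same estimate for $L^*$ gives $t\,|\langle\eu^{Lt}x,y\rangle|\le C\|x\|\,\|y\|$ for all $x,y\in H$, i.e.\ $\|\eu^{Lt}\|_{H\to H}\le C/t$ for $t\ge 1$, which is the desired uniform bound without any density upgrade.
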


By the assumption of Theorem~\ref{thm:nonlinear_stability}, spectral stability of the solution $u(\epsilon)$ is guaranteed and we are left with the proof of the uniform resolvent estimate on $\{\lambda\in \C:\RT(\lambda)\geq 0\}$. Using Lemma~\ref{lem:resolvent_est}, we find $\lambda^* \gg 1$ such that $(\lambda I-\widetilde{L}_{u(\epsilon),\epsilon})^{-1}$ is uniformly bounded on the set $\Lambda_{\lambda^*}$ for sufficiently small $\epsilon$. Moreover, since $\widetilde{L}_{u(\epsilon),\epsilon}$ is the generator of a $C_0$-semigroup on the state-space $Z \times Z$, the Hille-Yosida~Theorem ensures a uniform bound of the resolvent on $\{\lambda \in \C : \RT(\lambda)>\lambda_*\}$ for some constant $\lambda_*>0$. From the fact that $\lambda \mapsto (\lambda I-\widetilde{L}_{u(\epsilon),\epsilon})^{-1}$ is a meromorphic function with no poles in $\{ \lambda \in \mathbb{C}: \; \RT(\lambda) \geq 0 \}$,
the resolvent is uniformly bounded on compact subsets of $\C$ in 
$\{ \lambda \in \mathbb{C}: \; \RT(\lambda) \geq 0 \}$. Thus, we can conclude that $\widetilde{L}_{u(\epsilon),\epsilon}$ satisfies the Gearhart-Greiner-Pr\"uss resolvent bound and exponential stability in $Z \times Z$ follows.

\medskip

Finally, for step (iii), we will interpolate the decay estimate between the spaces $Z \times Z$ and $X \times X$. To do so, we have to establish bounds in $X \times X$ which is done the next lemma. The interpolation argument is then in the spirit of Lemma~5 in \cite{Stanislavova_Stefanov} and will also lead to decay estimates in the more general interpolation spaces $H_\per^s \times H_\per^s$ for $s \in [0,2]$. 

\begin{Lemma}\label{lem:Lin_Stability_Hs}
For any $s\in [0,2]$ and sufficiently small $\epsilon$ the semigroup $(\eu^{\widetilde{L}_{u(\epsilon),\epsilon} t})_{t\geq 0}$ has exponential decay in $H_\per^s([-\pi,\pi],\C) \times H_\per^s([-\pi,\pi],\C)$, i.e., there exist $C_s> 0$ such that
$$
\|\eu^{\widetilde{L}_{u(\epsilon),\epsilon} t}\|_{H^s \to H^s} \leq C_s \eu^{-\eta t} \quad\text{for } t \geq 0,
$$
where $-\eta<0$ is the previously established growth bound of the semigroup in $Z \times Z$.
\end{Lemma}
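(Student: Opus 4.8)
The plan is to obtain the $H^s$-decay for $s\in[0,2]$ by complex interpolation between the two endpoint cases. The case $s=0$ (exponential decay of $(\eu^{\widetilde L_{u(\epsilon),\epsilon}t})_{t\ge0}$ on $Z\times Z$) has already been established in step (ii) via the Gearhart--Greiner--Pr\"uss Theorem, with some rate $\eta>0$ and constant $C_0$. So the only genuine task is the endpoint $s=2$, i.e.\ an estimate $\|\eu^{\widetilde L_{u(\epsilon),\epsilon}t}\|_{X\times X\to X\times X}\le C_2\eu^{-\eta t}$ with the \emph{same} rate $\eta$.

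For the $X\times X$ estimate I would not re-run a Gearhart--Greiner--Pr\"uss argument (which would require a uniform $H^2$-resolvent bound on a half-plane), but instead exploit that the generator commutes with its own semigroup together with elliptic regularity. First, viewing $\widetilde L_{u(\epsilon),\epsilon}$ as a closed operator on $Z\times Z$ with domain $X\times X$, note (this is essentially Remark~\ref{fredholm_etc} for the $2\times2$ system) that $\widetilde L_{u(\epsilon),\epsilon}+cI$ differs from the isomorphism $-dJ\partial_x^2+cI:X\times X\to Z\times Z$ (any $c\ne0$) by a bounded multiplication operator, so its graph norm is equivalent to $\|\cdot\|_{H^2}$; that is, $\|\bm w\|_{H^2}\lesssim \|\bm w\|_{L^2}+\|\widetilde L_{u(\epsilon),\epsilon}\bm w\|_{L^2}$ for $\bm w\in X\times X$. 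Now fix $\bm w_0\in X\times X=D(\widetilde L_{u(\epsilon),\epsilon})$ and put $\bm w(t):=\eu^{\widetilde L_{u(\epsilon),\epsilon}t}\bm w_0$. A $C_0$-semigroup leaves the domain of its generator invariant and commutes with it, so $\bm w(t)\in X\times X$ and $\widetilde L_{u(\epsilon),\epsilon}\bm w(t)=\eu^{\widetilde L_{u(\epsilon),\epsilon}t}\,\widetilde L_{u(\epsilon),\epsilon}\bm w_0$. Applying the $Z\times Z$-decay of step (ii) to the two elements $\bm w_0$ and $\widetilde L_{u(\epsilon),\epsilon}\bm w_0$ of $Z\times Z$ gives
$$
\|\bm w(t)\|_{H^2}\ \lesssim\ \|\bm w(t)\|_{L^2}+\|\widetilde L_{u(\epsilon),\epsilon}\bm w(t)\|_{L^2}\ \le\ C_0\eu^{-\eta t}\bigl(\|\bm w_0\|_{L^2}+\|\widetilde L_{u(\epsilon),\epsilon}\bm w_0\|_{L^2}\bigr)\ \lesssim\ \eu^{-\eta t}\|\bm w_0\|_{H^2},
$$
which is the desired $X\times X$-bound with rate $\eta$.

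It remains to interpolate. The periodic Sobolev spaces form a compatible couple with $[Z,X]_{s/2}=H^s_{\mathrm{per}}$ (on the Fourier side these are just weighted $\ell^2$ sequence spaces, and the complex interpolation weight is the geometric mean), and $(\eu^{\widetilde L_{u(\epsilon),\epsilon}t})_{t\ge0}$ acts consistently on this couple; the same holds for the product spaces. Hence, by interpolation of the operator norm,
$$
\|\eu^{\widetilde L_{u(\epsilon),\epsilon}t}\|_{H^s\to H^s}\ \le\ \|\eu^{\widetilde L_{u(\epsilon),\epsilon}t}\|_{L^2\to L^2}^{\,1-s/2}\,\|\eu^{\widetilde L_{u(\epsilon),\epsilon}t}\|_{H^2\to H^2}^{\,s/2}\ \le\ C_0^{\,1-s/2}C_2^{\,s/2}\,\eu^{-\eta t}\ =:\ C_s\eu^{-\eta t},
$$
for all $s\in[0,2]$, which proves the lemma.

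I do not expect a serious obstacle here: the statement is really bookkeeping on top of step (ii). The one point needing a little care is the equivalence of the graph norm of $\widetilde L_{u(\epsilon),\epsilon}$ with the $H^2$-norm, which is what lets the $L^2$-decay of $\widetilde L_{u(\epsilon),\epsilon}\bm w(t)$ be converted into $H^2$-decay of $\bm w(t)$; this is exactly the elliptic/Fredholm structure recorded in Remark~\ref{fredholm_etc}. If in addition one wants $C_s$ and $\eta$ uniform in $\epsilon$ over a small interval $[-\epsilon_1,\epsilon_1]$, it suffices to observe that all implicit constants above depend only on $\|V\|_{L^\infty}$, $d$, $\mu$ and $\|u(\epsilon)\|_{H^2}$, the last of which stays bounded as $\epsilon\to0$ by Theorem~\ref{Fortsetzung_nichttrivial}.
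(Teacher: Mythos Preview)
Your argument is essentially correct, with one small inaccuracy worth flagging: you claim that $\widetilde L_{u(\epsilon),\epsilon}+cI$ differs from $-dJ\partial_x^2+cI$ by a \emph{bounded multiplication} operator, but $\widetilde L_{u(\epsilon),\epsilon}$ also contains the first-order transport term $\epsilon V(x)I\partial_x$ (cf.\ \eqref{decomposition}), so Remark~\ref{fredholm_etc} does not apply verbatim. This term is, however, relatively bounded with respect to $-dJ\partial_x^2$ with relative bound zero (via $\|\bm\varphi'\|_{L^2}\le a\|\bm\varphi''\|_{L^2}+\tfrac1{4a}\|\bm\varphi\|_{L^2}$, which the paper also records in the proof of Lemma~\ref{lem:gen_in_L2}), so the graph-norm equivalence you need still holds and the rest of your argument goes through unchanged.

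Your route is genuinely different from, and shorter than, the paper's. The paper measures the $H^2$-norm through the positive self-adjoint operator $A_{u(\epsilon)}+\gamma I$ and uses its complex powers $(A_{u(\epsilon)}+\gamma I)^{\theta+\iu r}$ to run a Stein-type interpolation between $\theta=0$ and $\theta=1$; since $A_{u(\epsilon)}+\gamma I$ is \emph{not} the generator, the $\theta=1$ estimate leaves a residual term $|\epsilon|\,\|V\|_{L^\infty}\|\partial_x\eu^{\widetilde L t}\bm v\|_{L^2}$ that must be absorbed by taking $|\epsilon|$ small. You instead use the graph norm of the generator itself as an equivalent $H^2$-norm, so commutation $\widetilde L\,\eu^{\widetilde L t}=\eu^{\widetilde L t}\,\widetilde L$ immediately lifts the $L^2$-decay to $H^2$ with no absorption step, and you then interpolate abstractly on the Sobolev scale $[Z,X]_{s/2}=H^s_{\per}$. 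The paper's version buys an explicit interpolation via a concrete spectral scale tied to $A_u$; your version buys simplicity and avoids both the case distinction on the sign of $d$ and the extra $\epsilon$-smallness requirement at this particular step.
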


\begin{proof}
We consider only the case $d>0$, since the other case can be shown by rewriting $JA_u$ as $-J(-A_u)$ and using the same arguments as presented below. If $d>0$, the operator $A_{u(\epsilon)} + \gamma I$ is positive and self-adjoint provided $\gamma>0$ is sufficiently large. Hence, for $z \in \C$ we can define the complex powers by
$$
(A_{u(\epsilon)} + \gamma I)^z \bm v = \int_0^\infty  \lambda^{z} d E_\lambda \bm v,  \quad \text{for }\bm v \in \dom(A_u{u(\epsilon)} + \gamma I)^z,
$$
with domain given by
$$
\dom(A_{u(\epsilon)} + \gamma I)^z = \left\{ \bm v \in Z \times Z : \|(A_{u(\epsilon)} + \gamma I)^z\bm v\|_{L^2}^2 = \int_0^\infty  \lambda^{2 \RT z} d \|E_\lambda \bm v\|_{L^2}^2 < \infty \right\}
$$
and where $E_\lambda$ for $\lambda \in \R$ is the family of self-adjoint spectral projections associated to $A_{u(\epsilon)} + \gamma I$. Note that for $\theta \in [0,1]$ the relation 
$$
\dom(A_{u(\epsilon)} + \gamma I)^\theta = H_\per^{2\theta}([-\pi,\pi],\C) \times H_\per^{2\theta}([-\pi,\pi],\C)
$$
is true, cf.~\cite{Lunardi} Theorem~4.36, and further for any $r \in \R$ the operator $(A_{u(\epsilon)} + \gamma I)^{\iu r}$ is unitary on $Z \times Z$. If $\theta = 0,1$ we will show that there exists $C_\theta >0$ such that
\begin{align*}
\forall r \in \R,\, \forall t \geq 0, \,\forall \bm v \in X \times X,:\quad
\|(A_{u(\epsilon)}+\gamma I)^{\theta + \iu r} \eu^{\widetilde{L}_{u(\epsilon),\epsilon} t} \bm v\|_{L^2} \leq C_\theta \eu^{-\eta t} \|\bm v\|_{H^{2\theta}}, 
\end{align*}
which implies
$$
\forall r \in \R, \,\forall t \geq 0,\, \forall \theta \in (0,1),\,\forall \bm v \in X \times X:
\;\|(A_{u(\epsilon)}+\gamma I)^{\theta + \iu r} \eu^{\widetilde{L}_{u(\epsilon),\epsilon} t} \bm v\|_{L^2} \leq C_0^{1-\theta} C_1^{\theta} \eu^{-\eta t} \|\bm v\|_{H^{2\theta}},
$$
by complex interpolation, cf.~\cite{Lunardi} Theorem~2.7. In particular, we see that
$$
\|\eu^{\widetilde{L}_{u(\epsilon),\epsilon} t}\|_{H^{s} \to H^s} \leq C_0^{1-s} C_1^s \eu^{-\eta t}
$$
which is precisely our claim. The estimate for $\theta=0$ has already been shown in the preceding discussion, so it remains to check the estimate for $\theta=1$. Let $\bm v \in X \times X$ and observe that
\begin{align*}
\|(A_{u(\epsilon)}+\gamma I)^{1 + \iu r } \eu^{\widetilde{L}_{u(\epsilon),\epsilon} t} \bm v\|_{L^2} &= \|(A_{u(\epsilon)}+\gamma I)\eu^{\widetilde{L}_{u(\epsilon),\epsilon} t} \bm v\|_{L^2}\\
&=\|(\widetilde{L}_{u(\epsilon),\epsilon} + J\gamma +I(\mu - \epsilon V(x) \partial_x)) \eu^{\widetilde{L}_{u(\epsilon),\epsilon} t} \bm v \|_{L^2} \\
&\leq \| \eu^{\widetilde{L}_{u(\epsilon),\epsilon} t} \widetilde{L}_{u(\epsilon),\epsilon}\bm v\|_{L^2} + C \| \eu^{\widetilde{L}_{u(\epsilon),\epsilon}t} \bm v\|_{L^2} + 
|\epsilon| \|V\|_{L^\infty} \|\partial_x \eu^{\widetilde{L}_{u(\epsilon),\epsilon}t} \bm v\|_{L^2}  \\
&\leq C \eu^{-\eta t} \|\widetilde{L}_{u(\epsilon),\epsilon} \bm v\|_{L^2} + C \eu^{-\eta t} \|\bm v\|_{L^2} + |\epsilon| \|V\|_{L^\infty} \|\eu^{\widetilde{L}_{u(\epsilon),\epsilon}t} \bm v\|_{H^1} \\
&\leq C \eu^{-\eta t} \|\bm v\|_{H^2} + |\epsilon|  C \|\eu^{\widetilde{L}_{u(\epsilon),\epsilon}t} \bm v\|_{H^2},
\end{align*}
which yields $\|(A_{u(\epsilon)}+\gamma I)^{1 + \iu r }\eu^{\widetilde{L}_{u(\epsilon),\epsilon} t} \bm v\|_{L^2} \leq C \eu^{-\eta t} \|\bm v\|_{H^2}$ if $\epsilon$ is sufficiently small because of the norm equivalence $\|\bm v\|_{H^2} \sim \|(A_{u(\epsilon)}+\gamma I)\bm v\|_{L^2}$.
\end{proof}

In particular Lemma~\ref{lem:Lin_Stability_Hs} establishes exponential stability of the linearization in $Y \times Y$, thus we have proved Theorem~\ref{thm:nonlinear_stability}.

\subsection{Proof of Lemma~\ref{lem:resolvent_est}}\label{sec:proof_resolvent_est}

The uniform resolvent estimate is proved if we can find a constant $C>0$ independent of $\lambda \in \Lambda_{\lambda^*}$ such that
\begin{equation}\label{eq:resolvent_reversed}
\forall  \bm\varphi \in X \times X:\quad \|(\lambda I - \widetilde{L}_{u(\epsilon),\epsilon}) \bm\varphi\|_{L^2} \geq C \|\bm\varphi\|_{L^2}.
\end{equation}
In order to simplify the situation, let us introduce the rotation on $Z \times Z$ as follows:
$$
R 
\begin{pmatrix}
\varphi_1 \\ \varphi_2
\end{pmatrix}:=
\begin{pmatrix}
\cos\theta & \sin\theta \\ -\sin\theta & \cos\theta
\end{pmatrix}
\begin{pmatrix}
\varphi_1 \\ \varphi_2
\end{pmatrix}
$$
with spatially varying angular $\theta(x) = \frac{\epsilon}{2d} \int_{-\pi}^x [V(y) - \hat{V}_0]dy$ where $\hat{V}_0 = \frac{1}{2\pi} \int_{-\pi}^{\pi} V(y) \,dy$ is the mean of the potential $V$. Since $R$ is an isometry on $Z \times Z$ the resolvent estimate \eqref{eq:resolvent_reversed} is equivalent to
$$
\forall  \bm\varphi \in X \times X:\quad \|(\lambda I - R\widetilde{L}_{u(\epsilon),\epsilon} R^{-1}) \bm\varphi\|_{L^2} \geq C \|\bm\varphi\|_{L^2},
$$
where we note that $\sigma(\widetilde{L}_{u(\epsilon),\epsilon}) = \sigma(R\widetilde{L}_{u(\epsilon),\epsilon}R^{-1})$.
The advantage of considering the operator $R\widetilde{L}_{u(\epsilon),\epsilon}R^{-1}$ becomes clear if we calculate
$$
R\widetilde{L}_{u(\epsilon),\epsilon}R^{-1} = J \tilde{A}_{u(\epsilon),\epsilon,V} - I (\mu - \epsilon \hat{V}_0 \partial_x) 
$$
where the operator $\tilde{A}_{u(\epsilon),\epsilon,V}$ given by
$$
\tilde{A}_{u(\epsilon),\epsilon,V} := 
\begin{pmatrix}
-d\partial_x^2 + W_1  & W_2+W_4 \\
W_2-W_4 & -d\partial_x^2 + W_3
\end{pmatrix}
$$
with potentials
\begin{align*}
W_1 &= \zeta +\cos^2\theta U_1 + 2 \cos\theta \sin\theta U_2 + \sin^2\theta U_3 + d \theta'^2 - \epsilon\theta' V,\\
W_2 &= (\cos^2\theta - \sin^2\theta) U_2 + \cos\theta \sin\theta (U_3-U_1),\\
W_3 &= \zeta + \sin^2\theta U_1 - 2 \cos\theta \sin\theta U_2 + \cos^2\theta U_3 + d \theta'^2 - \epsilon\theta' V ,\\
W_4 &= d \theta'',
\end{align*}
and functions
\begin{align*}
U_1 =-( 3 u_1^2(\epsilon) + u_2^2(\epsilon)), \,\,
U_2 = -2 u_1(\epsilon) u_2(\epsilon), \,\,
U_3 = -(u_1^2(\epsilon)+3u_2^2(\epsilon)).
\end{align*}
Clearly, the first order derivative is now multiplied by a constant instead of a spatially varying potential which will be used in the following calculations. We also note that the functions $W_i \in X$, $i=1,2,3$ depend upon the solution $u$ and the potential $V$ whereas $W_4 \in X$ only depends upon the potential $V$. For the proof of the resolvent estimate we use techniques presented in \cite{Stanislavova_Stefanov}, where the authors construct resolvents for the unperturbed LLE \eqref{LLE_original}. 

We need the following proposition, which is Lemma~4 in \cite{Stanislavova_Stefanov}.

\begin{Proposition}\label{prop:estimate_for_determinant}
Let $d\not=0$ and $\mu > 0$. Then there exists $\lambda^*>0$ depending on $d$ and $\mu$ with the property that for all $\omega \geq \lambda^* $ there is at most one $k_0=k_0(\omega,\mu) \in \N$ such that
$$
\omega \geq  |d^2k_0^4+{\mu}^2 -\omega^2 |.
$$
For all other $k \in \Z \setminus \{\pm k_0(\omega,\mu)\}$ we have
$$
|d^2k^4  +{\mu}^2 -\omega^2 | \geq \frac{1}{10} \max\{d^2k^2,\omega\}^{3/2}.
$$
Moreover, we find $k_0(\omega,\mu) = \mathcal{O}(\omega^{1/2})$ as $\omega \to \infty$.
\end{Proposition}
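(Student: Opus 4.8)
The plan is to study the real number $G(k):=d^{2}k^{4}+\mu^{2}-\omega^{2}$ as a function of the integer $k$, exploiting that, as a function of $k^{2}$, it is strictly increasing with a single zero at $k_{*}:=\big((\omega^{2}-\mu^{2})/d^{2}\big)^{1/4}$, which is of order $|d|^{-1/2}\omega^{1/2}$ as $\omega\to\infty$. All thresholds below are allowed to depend on $d$ and $\mu$, as the statement permits.

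First I would record the elementary monotonicity facts: $G(k)<0$ for $0\le k<k_{*}$, $G(k)>0$ for $k>k_{*}$, and $k_{*}=\mathcal{O}(\omega^{1/2})$; let $k_{0}\in\N$ be the integer nearest to $k_{*}$, so $|k_{0}-k_{*}|\le\tfrac12$. A one-line estimate shows that $|G(k)|\le\omega$ forces $d^{2}k^{4}\in[\omega^{2}-\mu^{2}-\omega,\ \omega^{2}-\mu^{2}+\omega]$, hence for $\omega$ large both $k\ge c_{d}\,\omega^{1/2}$ (with $c_{d}=(2d^{2})^{-1/4}$) and $|k-k_{*}|\le\tfrac12$, so such a $k$ must equal $k_{0}$; in particular $k_{0}=\mathcal{O}(\omega^{1/2})$. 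That there is \emph{at most one} such $k_{0}$ then follows from the separation of consecutive integers: since $G(k+1)-G(k)=d^{2}(4k^{3}+6k^{2}+4k+1)\ge 4d^{2}k^{3}$ and $G$ is increasing on $[0,\infty)$, two distinct nonnegative integers with $|G|\le\omega$ would have $G$-values differing by at least $4c_{d}^{3}d^{2}\omega^{3/2}$, which exceeds $2\omega\ge|G(k')|+|G(k)|$ once $\omega\ge\lambda^{*}$ — a contradiction. Combined with $G(-k)=G(k)$, this gives the first assertion.

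For the lower bound at $k\ne\pm k_{0}$ (by symmetry take $k\ge 0$) it suffices to prove $|G(k)|\ge\tfrac1{10}(d^{2}k^{2})^{3/2}$ and $|G(k)|\ge\tfrac1{10}\omega^{3/2}$ separately, since $\max\{d^{2}k^{2},\omega\}^{3/2}$ is the larger of the two. I would split according to the size of $d^{2}k^{4}$ relative to $\omega^{2}$. If $d^{2}k^{4}\ge 4\omega^{2}$, then $|G(k)|\ge\tfrac34 d^{2}k^{4}$, which dominates $\tfrac1{10}\omega^{3/2}$ trivially and, using that $\omega\ge\lambda^{*}$ forces $k\ge|d|$ in this range (so $k^{2}\ge(d^{2}k^{2})^{1/2}$), also dominates $\tfrac1{10}(d^{2}k^{2})^{3/2}$. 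If $d^{2}k^{4}\le\tfrac14\omega^{2}$, then $G(k)<0$ and $|G(k)|\ge\tfrac12\omega^{2}$, which dominates both targets (for the first, note $d^{2}k^{2}\le\tfrac12|d|\omega$ in this range and take $\omega\ge\lambda^{*}$ large). The remaining critical window $\tfrac14\omega^{2}\le d^{2}k^{4}\le 4\omega^{2}$ places $k$ near $k_{*}$, and since $k\ne k_{0}$ it is separated from $k_{*}$ by at least half an integer step; the factorization $|G(k)|=d^{2}|k-k_{*}|\,(k^{3}+k^{2}k_{*}+kk_{*}^{2}+k_{*}^{3})\gtrsim d^{2}k_{*}^{3}\asymp|d|^{1/2}\omega^{3/2}$ then beats $\tfrac1{10}\max\{d^{2}k^{2},\omega\}^{3/2}$ once $\omega\ge\lambda^{*}$, because $d^{2}k^{2}\lesssim|d|\omega$ throughout this window.

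The first two regimes are routine. The main obstacle is the critical window: one genuinely has to use the precise size $k_{*}\asymp|d|^{-1/2}\omega^{1/2}$ of the zero together with the fact that $k_{0}$ already absorbs the unique near-minimizer of $|G|$ over $\Z$, so that every remaining integer sits at distance $\gtrsim 1$ from $k_{*}$ on the scale where $G$ has derivative $\asymp d^{2}k_{*}^{3}$. The extra half-power of $\omega$ carried by $k_{*}^{3}\asymp\omega^{3/2}/|d|^{3/2}$ is exactly what makes the $\omega^{3/2}$-scale bound go through; all of the $d$- and $\mu$-dependent constants are absorbed into $\lambda^{*}$.
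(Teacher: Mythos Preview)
The paper does not give its own proof of this proposition; it simply cites it as Lemma~4 in \cite{Stanislavova_Stefanov}. Your argument follows the natural route---factor $G(k)=d^{2}(k^{4}-k_{*}^{4})$ around the real zero $k_{*}=((\omega^{2}-\mu^{2})/d^{2})^{1/4}$ and exploit that every integer other than the nearest one to $k_{*}$ sits at distance $\geq\tfrac12$ from $k_{*}$---and it is essentially correct. The uniqueness step, the bound $k_{0}=\mathcal O(\omega^{1/2})$, and the treatment of the two ``easy'' regimes $d^{2}k^{4}\geq 4\omega^{2}$ and $d^{2}k^{4}\leq\tfrac14\omega^{2}$ are all clean.

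There is one small but genuine slip in your closing sentence. In the critical window your lower bound $|G(k)|\gtrsim d^{2}k_{*}^{3}\asymp|d|^{1/2}\omega^{3/2}$ and the target $\tfrac1{10}\max\{d^{2}k^{2},\omega\}^{3/2}\asymp\max\{|d|,1\}^{3/2}\omega^{3/2}$ both scale exactly like $\omega^{3/2}$, so enlarging $\lambda^{*}$ does \emph{not} absorb the $d$-dependent ratio between them; the two sides differ by a fixed multiplicative factor independent of $\omega$. In fact, the literal constant $\tfrac1{10}$ in the statement cannot hold uniformly in $d\neq 0$ (for $|d|$ large or very small the inequality fails in this window). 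This is a defect of the proposition as stated rather than of your method: your argument does establish the inequality with $\tfrac1{10}$ replaced by some $c(d)>0$, and since the only use of the proposition in the paper is to obtain $\|(A_{k}^{1})^{-1}\|_{\C^{2\times2}}=\mathcal O(\omega^{-1/2})$ for $k\neq\pm k_{0}$, a $d$-dependent constant is all that is actually needed.
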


Now we can start to construct and bound the resolvent. By the Hille-Yoshida Theorem, a uniform resolvent estimate holds whenever $\RT\lambda$ is sufficiently large. It therefore remains to consider $\lambda = \delta + \iu \omega \in \Lambda_{\lambda^*}$ for some $\lambda^*>0$ and $\delta \geq 0$ on a compact set. Since $\delta$ replaces $\mu$ 
	in $\lambda I - \widetilde{L}_{u(\epsilon),\epsilon}$ by $\mu + \delta$ and the estimates of Proposition \ref{prop:estimate_for_determinant} holds for any $\mu > 0$ on a compact set, it sufficies to prove the uniform 
estimates for $\delta = 0$. For now, we do not specify the value of $\lambda^*$, since this will be done later in the proof. We can restrict to the case $\omega \geq \lambda^*$, since the proof for $\omega \leq -\lambda^*$ follows from symmetries of the spectral problem under complex conjugation. For $\bm v \in X \times X$ we define
\begin{equation}\label{eq:resolvent_est}
(\lambda I- R\widetilde{L}_{u(\epsilon),\epsilon} R^{-1}) \bm v =: \bm\psi \in Z \times Z
\end{equation}
and show that there exist bounded operators $T_1$ and $T_2$ on $Z \times Z$ depending on $\lambda$ with norms satisfying $\|T_1\|_{L^2 \to L^2} = \mathcal{O}(\omega^{-1/2})$ and $\|T_2\|_{L^2 \to L^2}=\mathcal{O}(1)$ as $\omega \to \infty$ such that \eqref{eq:resolvent_est} implies
\begin{equation}\label{eq:neumann_form}
(I+T_1) \bm v = T_2 \bm\psi.
\end{equation}
If $\lambda^*$ is sufficiently large, we then deduce that $I+T_1$ is a small perturbation of the identity, and hence invertible with norm uniformly bounded in $\lambda$ which is our claim. Therefore, it remains to show \eqref{eq:neumann_form}. We introduce the matrix-valued potential
$$
W = 
\begin{pmatrix}
W_1 & W_2+W_4 \\
W_2 - W_4 & W_3
\end{pmatrix} 
$$ 
in order to write
$$
\lambda I- R\widetilde{L}_{u(\epsilon),\epsilon} R^{-1} = \iu \omega I- J(-d \partial_x^2 + W
) + I (\mu - \epsilon \hat{V}_0 \partial_x).
$$
Now, let $A = \lambda I- R\widetilde{L}_{u(\epsilon),\epsilon} R^{-1} + JW$ and observe that $A\bm v(x) = \sum_{k \in \Z} A_k \hat{\bm v}_k \eu^{\iu k x}$ with $\bm v (x) = \sum_{k \in \Z} \hat{\bm v}_k \eu^{\iu k x}$ and Fourier multiplier
$$
A_k = A_k^1 + A_k^2 =
\begin{pmatrix}
\iu \omega + \mu & -dk^2 \\
dk^2 & \iu \omega + \mu
\end{pmatrix}
+
\begin{pmatrix}
- \iu\epsilon \hat{V}_0k & 0 \\
0 & - \iu \epsilon \hat{V}_0k
\end{pmatrix}.
$$
The inverse of $A_k^1$ is given by
$$
(A_k^1)^{-1} = \frac{1}{\text{det($A_k^1$)}} 
\begin{pmatrix}
\iu \omega + \mu & dk^2 \\
-dk^2 & \iu \omega + \mu
\end{pmatrix}
$$
and by Proposition~\ref{prop:estimate_for_determinant} there exists at most one $k_0 = k_0(\omega,\mu) \in \N$ such that
$$
|\mathrm{det}(A_k^1)| \geq |d^2k^4 + \mu^2 -\omega^2| \geq \frac{1}{10} \max\{d^2k^2,\omega\}^{3/2} \text{ for all $k \not= \pm k_0$}
$$
provided that $\lambda^*$ is sufficiently large. Thus $A_k^1$ is invertible with bound $\|(A_k^1)^{-1}\|_{\C^{2\times 2}} \leq C / \max\{\omega^{1/2}, k\}$ for all $k \not = \pm k_0$. Using again Proposition~\ref{prop:estimate_for_determinant}, we have the asymptotic $k_0 = k_0(\omega) = \mathcal{O}(\omega^{1/2})$ as $\omega \to \infty$. Consequently, if $|\epsilon|$ is sufficiently small, then $A_k=A_k^1(I+(A_k^1)^{-1} A_k^2)$, $k\not= \pm k_0$, is also invertible with the bound $\|(A_k)^{-1}\|_{\C^{2\times 2}} =\mathcal{O}(\omega^{-1/2})$ as $\omega \to \infty$. Next, for the above $k_0 \in \N$, we introduce the orthogonal projections $P, Q, Q_1, Q_2: Z\times Z\to Z\times Z$ as follows: 
$$
Q_1 \bm v = \hat{\bm v}_{k_0} \eu^{\iu k_0(\cdot)}, \quad Q_2 \bm v = \hat{\bm v}_{-k_0} \eu^{-\iu k_0(\cdot)}
$$
and 
$$
Q = Q_1+ Q_2, \quad P=I-Q.
$$
This allows us to decompose \eqref{eq:resolvent_est} as follows:
\begin{align}
PAP \bm v - P JW \bm v  &=  P \bm\psi, \label{eq:split_1} \\
Q A Q \bm v - Q JW \bm v  &= Q \bm\psi. \label{eq:split_2}
\end{align}
From the preceding arguments we find
$$
\|(PAP)^{-1}\|_{L^2 \to L^2} = \mathcal{O}(\omega^{-1/2}) \text{ as }\omega\to \infty
$$ 
which implies that \eqref{eq:split_1} is equivalent to
\begin{align}\label{eq:proof_step1}
P \bm v  - (PAP)^{-1} P JW \bm v  =  (PAP)^{-1} \bm\psi
\end{align}
with bound $\|(PAP)^{-1} JW\|_{L^2 \to L^2} = \mathcal{O}(\omega^{-1/2})$ as $\omega \to \infty$.

\medskip

Next we investigate \eqref{eq:split_2} which we decompose a second time to find
\begin{align}
Q_1 A Q_1 \bm v - Q_1  JW Q_1 \bm v - Q_1 JW Q_2 \bm v - Q_1 JW P \bm v &= Q_1 \bm\psi, \label{eq:split_2_1}\\
Q_2 A Q_2 \bm v - Q_2 JW Q_1 \bm v - Q_2 JW Q_2 \bm v - Q_2 JW P \bm v &= Q_2 \bm\psi. \label{eq:split_2_2}
\end{align}
Both equations can be handled similarly and thus we focus on the first one. Using \eqref{eq:proof_step1} we can write \eqref{eq:split_2_1} as
$$
[Q_1 A Q_1  - Q_1  JW Q_1 ] \bm v - Q_1 JW Q_2 \bm v - Q_1 JW (PAP)^{-1} P JW \bm v = Q_1 JW (PAP)^{-1} \bm\psi + Q_1 \bm\psi.
$$
The operator $B:=Q_1 A Q_1  - Q_1  JW Q_1$ acts like a Fourier-multiplier on $\range Q_1$ with matrix
$$
B_{k_0} =
\begin{pmatrix}
\iu (\omega - \epsilon \hat{V}_0 k_0) +\mu - (\hat{W_2})_{0} + (\hat{W_4})_{0}  & -dk_0^2 - (\hat{W_3})_{0} \\
dk_0^2 + (\hat{W_1})_0 & \iu (\omega - \epsilon \hat{V}_0 k_0) + \mu + (\hat{W_2})_{0} + (\hat{W_4})_{0}
\end{pmatrix}
$$
and we observe that
$$
|\mathrm{det}(B_{k_0})| \geq |\IT \mathrm{det}(B_{k_0})|  = 2 |\omega - \epsilon \hat{V}_0 k_0| |\mu + \hat{(W_4)}_0|  \sim \omega 
$$
since $k_0 = \mathcal{O}(\omega^{1/2})$ and $\omega \gg 1$. This means that $B_{k_0}$ is invertible with $\|B_{k_0}^{-1}\|_{\C^{2 \times 2}}$ uniformly bounded in $\omega \gg 1$, and thus the same holds for the operator $B$. Inverting $B$ yields
\begin{align*}
Q_1 \bm v - B^{-1}[Q_1 JW Q_2 + Q_1 JW (PAP)^{-1} P JW]\bm v = B^{-1} Q_1 JW (PAP)^{-1} \bm\psi + B^{-1}Q_1 \bm\psi
\end{align*}
and since we have $W_i \in Y$ for $i=1,2,3,4$ we can exploit decay of the Fourier-coefficients
$$
|(\hat{W_i})_{k}| \leq \frac{C}{\sqrt{1+k^2}} \quad\text{for all $k\in \Z$}
$$
to bound $Q_1  JW Q_2\bm v = (\hat{JW})_{2k_0} \hat{\bm v}_{-k_0} \eu^{\iu k_0 (\cdot)}$:
$$
\|Q_1  JW Q_2\|_{L^2 \to L^2} = \mathcal{O}(k_0(\omega,\mu)^{-1}) = \mathcal{O}(\omega^{-1/2}) \text{ as } \omega \to \infty.
$$
Finally from the bounds of the first part we infer that
\begin{align*}
\|Q_1 JW (PAP)^{-1} P JW\|_{L^2 \to L^2} &= \mathcal{O}(\omega^{-1/2}) \text{ as }\omega \to \infty,\\
\|Q_1 JW (PAP)^{-1}\|_{L^2 \to L^2} &= \mathcal{O}(\omega^{-1/2}) \text{ as }\omega \to \infty
\end{align*}
and as a conclusion we arrive at \eqref{eq:neumann_form} which is all we had to prove.

\appendix
\section{Derivation of the perturbed LLE}
\label{appA}

The following is a derivation of the perturbed LLE \eqref{TWE_dyn} from the dual laser pump equation \eqref{LLE_dual}. We start by taking a solution $u=u(x,t)$ of \eqref{LLE_dual}. Jumping in a moving coordinate system we set $\tilde u(x,t)= u(k_1x-\nu_1 t,t)$ and find that $\tilde u$ satisfies
\begin{equation} \label{first_step}
\iu \partial_t \tilde u - \iu \nu_1 \partial_{\xi} \tilde u = -dk_1^2 \partial_{\xi}^2 \tilde u+ (-\iu\mu +\zeta)\tilde u - |\tilde u|^2\tilde u+ \iu f_0 +\iu f_1 \eu^{\iu \xi},
\end{equation}
where $\xi := k_1 x - \nu_1 t$.
Next, using the approximation $\arctan s \approx s$ for $|s|$ small, we find for $|f_0|\gg |f_1|$ that
$$
f_0+f_1\eu^{\iu \xi} = \sqrt{f_0^2+2f_0f_1\cos \xi + f_1^2} \eu^{\iu \arctan\frac{f_1 \sin \xi}{f_0+f_1\cos \xi}} \approx f_0 \eu^{\iu \frac{f_1}{f_0}\sin \xi}.
$$
Inserting this into \eqref{first_step} we find that approximately the following equation holds for $\tilde u$
\begin{equation} \label{second_step}
\iu \partial_t \tilde u - \iu \nu_1 \partial_{\xi} \tilde u = -dk_1^2 \partial_{\xi}^2 \tilde u+(-\iu\mu +\zeta)\tilde u  - |\tilde u|^2\tilde u+ \iu f_0 \eu^{\iu \frac{f_1}{f_0}\sin \xi}.
\end{equation}
This suggests to set $\tilde u(\xi,t)=w(\xi,t) \eu^{\iu \frac{f_1}{f_0}\sin \xi}$ so that $w$ solves
\begin{align*}
\iu \partial_t w =&  -dk_1^2 \partial_{\xi}^2 w + \left(\iu \nu_1-\iu 2d k_1^2 \frac{f_1}{f_0}\cos \xi\right) \partial_{\xi} w \\
& + \Bigl(-\iu\mu+\zeta \underbrace{- \nu_1\frac{f_1}{f_0}\cos \xi + dk_1^2\frac{f_1^2}{f_0^2}\cos^2 \xi +\iu dk_1^2 \frac{f_1}{f_0}\sin \xi}_{=:\alpha(\xi)}\Bigr) w -|w|^2 w+\iu f_0.
\end{align*}
Using $|f_1|\ll |f_0|$ we see that the term $\alpha(\xi)$ is much smaller than $-\iu\mu+\zeta$ for physically relevant (normalized) values of $\mu=\mathcal{O}(1)$ and $\zeta$ between $\mathcal{O}(1)$ and $\mathcal{O}(10)$. Neglecting $\alpha(\xi)$ we arrive at 
$$
\iu \partial_t w = -dk_1^2 \partial_{\xi}^2 w+\iu \bigl(\underbrace{\nu_1-2d k_1^2 \frac{f_1}{f_0}\cos \xi}_{=:V(\xi)}\bigr) \partial_{\xi} w + (-\iu\mu+\zeta) w -|w|^2 w+\iu f_0
$$
which is our target equation \eqref{TWE_dyn} in the case $\epsilon=1$ and with $d$ replaced by $d k_1^2$.

\section{Stability criterion for solitary waves in the limit of small $\mu$}
\label{appB}

The stability criterion of Theorem \ref{thm:spectral_stability} becomes more explicit in the limit $\mu \to 0$ for solitary waves on $\mathbb{R}$ for the focusing case $d > 0$. We thus consider the stationary LLE in the form:
\begin{equation}
\label{TWE-line}
-d u''+(\zeta-\mathrm{i}\mu)u-|u|^2 u +\iu \mu f_0=0, \qquad x \in \mathbb{R}.
\end{equation}
Here both the pumping term $\mathrm{i} \mu f_0$ and the dissipative term $-\mathrm{i} \mu u$ are small and of equal order in $\mu$. When $\mu$ is small, the solution can be expanded asymptotically as
\begin{equation}
\label{expansion}
u = u^{(0)} + \mu u^{(1)} + \mathcal{O}(\mu^2).
\end{equation}
Here $u^{(0)}$ is the solitary wave of the nonlinear Schr\"{o}dinger equation (NLSE) which exists if $d > 0$ and $u^{(1)}$ is found from the linear inhomogeneous equation 
\begin{equation}
\label{lin-mu-correction}
(-d \partial_x^2 + \zeta-2|u^{(0)}|^2) u^{(1)} - (u^{(0)})^2 \bar{u}^{(1)} = \mathrm{i} u^{(0)} +\iu f_0.
\end{equation}
By using the vector form with $u = u_1 + \mathrm{i} u_2$ and the linearization operator $\widetilde{L}_u = J A_u - \mu I $ as in (\ref{decomposition}), we can rewrite (\ref{lin-mu-correction}) in the form: 
$J A_{u^{(0)}} \bm{u}^{(1)} = \bm{u}^{(0)} +f_0$. Recall that 
$$
\ker \widetilde{L}_{u} = \spann\{\bm{u}'\}, \quad \ker  \widetilde{L}_{u}^* = \spann\{J \bm{\phi}^*\},
$$
according to Assumption (A2), which implies that 
$$
J A_u \bm{u}' = \mu \bm{u}', \qquad J A_u \bm{\phi}^* = -\mu \bm{\phi}^*.
$$
Expansion (\ref{expansion}) yields at the order of $\mathcal{O}(\mu)$ that 
\begin{align*}
\bm{u}' &= (\bm{u}^{(0)})' + \mu (\bm{u}^{(1)})' + \mathcal{O}(\mu^2), \\
\bm{\phi}^* &= C \left[ (\bm{u}^{(0)})' + \mu [(\bm{u}^{(1)})' + 2 \bm{v}^{(1)}] + \mathcal{O}(\mu^2) \right],
\end{align*}
where $\bm{v}^{(1)}$ is a solution of the linear inhomogeneous equation 
$J A_{u^{(0)}} \bm{v}^{(1)} = -(\bm{u}^{(0)})'$ and the constant 
$C= C(\mu) \in \mathbb{C}$ is found from the normalization condition 
$\langle \bm u', J\bm\phi^* \rangle_{L^2} = 1$. The solution of $J A_{u^{(0)}} \bm{v}^{(1)} = -(\bm{u}^{(0)})'$ on the line $\mathbb{R}$ is available explicitly:
\begin{equation*}
\bm{v}^{(1)} = - \frac{1}{2d} x J \bm{u}^{(0)},
\end{equation*}
where $\bm{u}^{(0)}(x) \to 0$ as $|x| \to \infty$ exponentially fast in the 
case of solitary waves for $d > 0$. This allows us to compute by using  integration by parts:
\begin{align*}
\langle \bm u', J\bm\phi^* \rangle_{L^2} &= C \left[ 2\mu \int_{\mathbb{R}} 
[(u_1^{(0)})' v_2^{(1)} - (u_2^{(0)})' v_1^{(1)}] dx + \mathcal{O}(\mu^2) \right] \\
&= C \left[ \mu d^{-1} \int_{\mathbb{R}} x
[(u_1^{(0)})' u_1^{(0)} + (u_2^{(0)})' u_2^{(0)}] dx + \mathcal{O}(\mu^2) \right] \\
&= C \left[ -\frac{\mu}{2d} \| u^{(0)} \|^2_{L^2} + \mathcal{O}(\mu^2) \right]. 
\end{align*}
Normalization $\langle \bm u', J\bm\phi^* \rangle_{L^2} = 1$ defines 
$C$ asymptotically as follows:
$$
C = - \frac{2d}{\mu \| u^{(0)} \|^2_{L^2}} \left[ 1 + \mathcal{O}(\mu) \right].
$$
The stability condition of Theorem \ref{thm:spectral_stability} is expressed in terms of the sign of $V'_{\mathrm{eff}}(\sigma_0)$, where $\sigma_0$ is a simple root of $V_{\mathrm{eff}}$. The effective potential can now be written more explicitly as 
\begin{align*}
V_{\mathrm{eff}}(\sigma_0) &= \langle V(\cdot + \sigma_0) \bm u', J\bm\phi^* \rangle_{L^2} \\
&= C \left[ \mu d^{-1} \int_{\mathbb{R}} x V(x + \sigma_0)
[(u_1^{(0)})' u_1^{(0)} + (u_2^{(0)})' u_2^{(0)}] dx + \mathcal{O}(\mu^2) \right] \\
&= \frac{1}{\| u^{(0)} \|^2_{L^2}} \int_{\mathbb{R}} [x V'(x+\sigma_0) + V(x + \sigma_0)] |u^{(0)}|^2 dx
 + \mathcal{O}(\mu).
\end{align*}
If $V_{\mathrm{eff}}(\sigma_0) = 0$, then 
the solitary wave of the stationary LLE (\ref{TWE-line}) with small $\mu = 0$ 
is uniquely continued in the perturbed equation for small $\epsilon$ and 
the unique continuation is spectrally stable if $V'_{\mathrm{eff}}(\sigma_0) \cdot \epsilon > 0$.

\section*{Acknowledgments} L.~Bengel and W.~Reichel acknowledge funding by the Deutsche Forschungsgemeinschaft (DFG, German Research Foundation) -- Project-ID 258734477 -- SFB 1173. D. E. Pelinovsky acknowledges support by the Alexander von Humboldt Foundation as Humboldt Reseach Award. 

We are grateful to Dr. Huanfa Peng, Institute of Photonics and Quantum Electronics (IPQ) at Karlsruhe Institue of Technology for showing us how to derive our main equation \eqref{TWE_dyn} from the two-mode pumping variant \eqref{LLE_dual} of the LLE.

\bibliographystyle{plainurl}
\bibliography{bibliography}
	
\end{document}